\newcommand{\p}[1]{{\mathbb{P}^{#1}}}
\newcommand{\pn}{{\mathbb{P}^n}}
\newcommand{\op}[1]{{\mathcal O}_{\mathbb{P}^{#1}}}
\newcommand{\opn}{{\mathcal O}_{\mathbb{P}^n}}
\newcommand{\tp}[1]{{\rm T}{\mathbb{P}^{#1}}}
\newcommand{\tpn}{{\rm T}{\mathbb{P}^{n}}}
\newcommand{\sing}{\operatorname{Sing}}
\newcommand{\supp}{\operatorname{Supp}}
\newcommand{\calb}{{\mathcal B}}
\newcommand{\cald}{{\mathcal D}}
\newcommand{\calf}{{\mathcal F}}
\newcommand{\calh}{{\mathcal H}}
\newcommand{\cali}{{\mathcal I}}
\newcommand{\calj}{{\mathcal J}}
\newcommand{\calr}{{\mathcal R}}
\newcommand{\inext}{{\mathcal E}{\it xt}}
\newcommand{\Ext}{\operatorname{Ext}}
\newcommand{\Hom}{\operatorname{Hom}}
\DeclareMathOperator{\coker}{coker}
\DeclareMathOperator{\im}{im}
\DeclareMathOperator{\rk}{{rk}}
\def\sD{{\mathscr{D}}}
\def\sF{{\mathscr{F}}}
\def\sG{{\mathscr{G}}}
\newcommand{\into}{\hookrightarrow}
\newcommand{\onto}{\twoheadrightarrow}
\newtheorem{theorem}{Theorem}
\newtheorem*{mthm}{Main Theorem}
\newtheorem{proposition}[theorem]{Proposition}
\newtheorem{lemma}[theorem]{Lemma}
\newtheorem{corollary}[theorem]{Corollary}
\theoremstyle{definition}
\newtheorem{remark}[theorem]{Remark}
\newtheorem{example}[theorem]{Example}
\definecolor{grn}{rgb}{0, 0.50, 0.0}
\title[Codimension one distributions of degree 2 on $\p3$]{Codimension one distributions of degree $2$ on the three-dimensional projective space}
\author[H. Galeano]{Hugo Galeano}
\email{hagaleano@correo.unicordoba.edu.co}
\address[HG]{Universidad de Córdoba \\
	Carrera 6 No. 77-305 \\ Montería - Córdoba, Colombia}
\author[M. Jardim]{Marcos Jardim}
\email{jardim@unicamp.br}
\address[MJ]{IMECC - UNICAMP \\ Departamento de Matem\'atica \\
	Rua S\'ergio Buarque de Holanda, 651\\ 13083-970 Campinas-SP, Brazil}
\author[A. Muniz]{Alan Muniz}
\address[AM]{São Gonçalo-RJ, Brazil}
\email[A. Muniz]{alannmuniz@gmail.com}
\subjclass[2020]{Primary 58A17, 14D20, 14J60; Secondary 14D22, 14F06, 13D02}
\keywords{Holomorphic distributions, sheaves, singular scheme}
\begin{document}

	\begin{abstract}
		We establish a full classification of degree $2$ codimension one distributions on $\mathbb{P}^3$ according to invariants of their tangent sheaves.
	\end{abstract}
	\maketitle
	
	\tableofcontents
	\section{Introduction}
	
	Let $\tpn$ and $\Omega^1_{\pn}$ denote the tangent and cotangent bundles of the complex projective space $\pn$. A \emph{codimension one holomorphic distribution} $\sD$ on $\pn$ is the data of a closed subset $|Z|$ and, for each point $p\in \pn\setminus |Z|$, a subspace $T_\sD(p) \subset {\rm T}_p\pn$ of codimension one. For every such distribution $\sD$ there exists a twisted $1$-form $\omega\in H^0(\Omega^1_{\pn}(d+2))$ for some $d\ge0$ such that $|Z|$ is its vanishing set and the fiber $T_\sD(p)$ at the point $p$ is precisely $\ker \omega(p)$. 
	
	In sheaf theoretical terms, $\omega$ defines a morphism $\omega \colon \tpn \rightarrow \opn(d+2)$ whose image is a twisted sheaf of ideals $\mathscr{I}_Z(d+2)$ defining a natural scheme structure to $|Z|$; we call $Z=:\sing(\sD)$ the \emph{singular scheme} of $\sD$. It is assumed that $\sing(\sD)$ has codimension at least $2$ and we denote by $\sing_{n-2}(\sD)$ the union of its irreducible components of pure codimension $2$. The sheaf $T_\sD\simeq\ker\omega$, called the \emph{tangent sheaf} of $\sD$, is a saturated subsheaf of $\tpn$ of rank $n-1$; in particular it is reflexive. Moreover, the integer $d$ is called the \emph{degree} of $\sD$.

	The information provided in the previous paragraph can be neatly described in terms of a short exact sequence as follows: 
	\[
	\sD ~:~ 0 \longrightarrow T_\sD \longrightarrow \tpn \stackrel{\omega}{\longrightarrow} \mathscr{I}_Z(d+2) \longrightarrow 0.
	\]
	
	A codimension one distribution $\sD$ is \emph{integrable}, i.e. it defines a \emph{foliation}, if $T_\sD \subset \tpn$ is closed under the Lie bracket, $[T_\sD,T_\sD]\subset T_\sD$; this is equivalent to the condition $\omega\wedge d\omega=0$ on the 1-form $\omega$. 
	
	The study of distributions, and especially foliations, on $\pn$ from the point of view of Algebraic Geometry has been a very active theme of research in the past few decades. A central problem has been the classification of codimension one foliations on $\pn$ with a given degree $d$. In \cite{Jou}, Jouanolou classified codimension one foliations of degrees $0$ and $1$ on $\pn$; in \cite{CLN}, Cerveau and Lins Neto classified codimension one foliations of degree $2$ on $\pn$. These authors describe the irreducible components of the algebraic set
	$$ \calf(d,n):= \{[\omega]\in \mathbb{P}H^0(\Omega^1_{\pn}(d+2)) \mid \omega\wedge d\omega=0 \} $$
	for $d=0,1$ and $2$, which can be regarded as parametrizing codimension one foliations of degree $d$ on $\pn$; their classification is given in terms of a naive deformation theory for the 1-form $\omega$.

	If we remove the integrability condition then any two $1$-forms in $H^0(\Omega^1_{\pn}(d+2))$ can be deformed into one another with no regard of the distributions they define. Thus a finer moduli theory was needed to study such objects.
	A novel approach to the study of flat deformations of codimension one distributions, regardless of integrability, was introduced in \cite{CCJ1}. The authors propose a classification in terms of natural topological invariants associated to a codimension one distribution, namely the Chern classes of tangent sheaf $T_\sD$, and provide a full description for $T_\sD$ and $Z$ when $d=0$ or $1$, see \cite[Proposition 7.1]{CCJ1} and \cite[Section 8]{CCJ1}, respectively. In addition, a classification of codimension one distribution on $\p3$ of degree 2 with locally free tangent sheaf and reduced singular scheme is also given, see \cite[Theorem 9.5]{CCJ1}.
	
	The aim of this work is to present a complete picture for codimension one distributions on $\p3$ such that $d=2$ or, equivalently, $c_1(T_\sD) = 0$. More precisely, we describe all possible tangent sheaves, addressing (semi)stability, Chern classes and spectra, if applicable. In addition, we also describe all possible singular schemes. Our main result is the following.
	
	\begin{mthm}\label{maintheorem}
		Let $\sD$ be a codimension one distribution of degree 2 on $\p3$. Then $T_\sD$ is $\mu$-semistable whenever it does not split as a sum of line bundles; it can be strictly $\mu$-semistable only when $(c_2(T_\sD),c_3(T_\sD))=(1,2)$ or $(2,4)$. In addition, the second and third Chern classes and spectra of $T_\sD$ are listed in Table \ref{deg 2 table}, where $\sing_1(\sD)$ is given. Finally, $\sing(\sD)$ is never contained in a quadric surface.
	\end{mthm}

	\begin{table}[t] \centering
		\begin{tabular}{|c|c|c|c|}
			\hline
			$c_2(T_\sD)$ & $c_3(T_\sD)$ & Spectrum & $\sing_1(\sD)$    \\ \hline \hline
			6     & 20  & \{-3,-2,-1,-1,-1\} & empty       \\ \hline
			5     & 14  & \{-2,-2,-1,-1,-1\}  & line    \\ \hline
			\multirow{3}{*}{4} & 10  & \{-2,-1,-1,-1\} & conic   \\ \cline{2-4} 
			& 8  & \{-1,-1,-1,-1\} & two skew lines \\ \cline{2-4} 
			& 6 & \{-1,-1,-1,0\} & double line of genus $-2$\\ \hline %
			\multirow{5}{*}{3} & 8  & \{-2,-1,-1\} & plane cubic curve     \\ \cline{2-4} 
			& 6  & \{-1,-1,-1\} & twisted cubic \\ \cline{2-4} 
			& 4  & \{-1,-1,0\} & conic $\sqcup$ line \\ \cline{2-4} 
			& 2  & \{-1,0,0\} & three skew lines \\ \cline{2-4} 
			& 0   & \{0,0,0\} & double line of genus $-2$ $\sqcup$ line \\ \hline
			\multirow{3}{*}{2} & 4   & \{-1,-1\} & elliptic quartic curve\\ \cline{2-4} 
			& 2   & \{-1,0\} & rational quartic curve\\ \cline{2-4} 
			& 0   & \{0,0\} & twisted cubic $\sqcup$ line\\ \hline
			\multirow{2}{*}{1} & 2   & \{-1\} & curve of degree 5, genus 2   \\ \cline{2-4} 
			& 0   & \{0\} & elliptic curve of degree 5   \\ \hline
			0     & 0   & split & ACM curve of degree 6 genus 3 \\ \hline
			-1     & 0   & split & ACM curve of degree 7 genus 5 \\ \hline
		\end{tabular}
		\caption{Possible Chern classes, spectra and singular loci of codimension one distribution of degree 2; the last column describes a generic point in the irreducible component of the Hilbert scheme that contains $\sing_1(\sD)$. 
		} 
		\label{deg 2 table}
	\end{table}
	
	We recall that the spectrum of a rank 2 reflexive sheaf is a multiset of integers that encodes partial information on its cohomology, see Section \ref{subsect:spectrum}.

	The bulk of this article is dedicated to the proof of \hyperref[maintheorem]{Main Theorem}. Some cases, namely those with $c_2(T_\sD)\le1$ and $(c_2(T_\sD),c_3(T_\sD))=(2,4)$ or $(3,8)$, where already established in \cite{CCJ1}; in addition, the case $(c_2(T_\sD),c_3(T_\sD))=(6,20)$ corresponds to 1-forms with only isolated singularities, which are known to form an open subset of $H^0(\Omega^1_{\p3}(4))$. Furthermore, the following facts were already observed in \cite[Section 8]{CCJ1}:
	\begin{enumerate}
		\item if $c_2(T_\sD)=5$, then $T_\sD$ is $\mu$-stable and $c_3(T_\sD)=14$;
		\item if $c_2(T_\sD)=4$, then $T_\sD$ is $\mu$-stable and $0\le c_3(T_\sD)\le 10$;
		\item if $c_2(T_\sD)=3$, then $T_\sD$ is $\mu$-semistable and $0\le c_3(T_\sD)\le 8$;
		\item if $c_2(T_\sD)=2$, then $T_\sD$ is $\mu$-semistable and $0\le c_3(T_\sD)\le 4$.
	\end{enumerate}
	A precise description of the cases for which $T_\sD$ is either strictly $\mu$-semistable or not $\mu$-semistable is given in Theorem \ref{thm:stability} below. In fact, it is worth emphasising that we find two families of distributions with $(c_2(T_\sD),c_3(T_\sD))=(2,4)$, one with $T_\sD$ $\mu$-stable and the other with $T_\sD$ strictly $\mu$-semistable, see Section \ref{sect:c2=2}.
	
	For the sake of completeness, we begin our proof by revising the basic properties of codimension one distributions of degree 2 mentioned above (see Section \ref{sect:pre}), as well as the cases with $c_2(T_\sD)\le1$ in Section \ref{sect:c2<=1}.
	
	The remaining cases are established using three different techniques, introduced in Section \ref{sect:basics}. The key point is that once $(c_2(T_\sD),c_3(T_\sD))$ is fixed, the degree and arithmetic genus of $\sing_1(\sD)$ can be computed via the expressions in display \eqref{eq:chern dist from folbc deg2}. When the degree is less than 5, the curves with these given degree and arithmetic genus are explicit described in the literature, especially \cite{N,NS:deg4}. So if we construct a codimension one distribution $\sD$ with a given $\sing_1(\sD)$, then $T_\sD$ will have the desired properties.
	
	The first technique, described in Section \ref{subsect:dist from folbc}, relies on codimension two distributions on $\p3$, i.e., \emph{foliations by curves}: given a codimension two distribution, one can find a codimension one distribution containing it and whose singular locus can be explicitly described, see Proposition \ref{prop: const dist folbc}. This result is applied to prove the existence of codimension one distributions $\sD$ of degree 2 such that $\sing_1(\sD)$ is a pair of disjoint lines, a smooth conic or a line, see Sections \ref{sec:4-8}, \ref{sec:4-10} and \ref{sect:c2=5}, establishing the existence of the cases $(c_2(T_\sD),c_3(T_\sD))=(4,8),(4,10)$ and $(5,14)$, respectively. It is also interesting to note that our results can be regarded as a proof of existence of stable rank 2 reflexives sheaves with the spectra given in Table \ref{deg 2 table}.
	
	The second approach, described in Section \ref{subsect:syzygies}, relies on constructing an explicit expression for a 1-form $\omega$ vanishing along a given curve. This procedure provides explicit examples for the cases $(c_2(T_\sD),c_3(T_\sD))=(3,2) , (3,4), (3,6)$ and $(4,6)$ (see Sections \ref{sect:c2=3 2-6} and \ref{sect:c2=4}) and also allows to show, as a consequence of Corollary \ref{cor:2line}, that the cases $(c_2(T_\sD),c_3(T_\sD))=(4,0),(4,2)$ and $(4,4)$ cannot be realized. All explicit examples presented in this paper were computed with Macaulay2 \cite{M2}.
	
	Finally, the third technique looks into properties of stable rank 2 reflexive sheaves with desired second and third Chern classes. Exploring these properties, one can prove the existence of a monomorphism from a given sheaf $F$ into $\tp3$ whose cokernel is torsion free. This is applied to the case $(c_2(T_\sD),c_3(T_\sD))=(3,0)$, for which a profound knowledge of stable rank 2 reflexive sheaves is available. The advantage of this more delicate technique is that it allows for the construction not only of a particular example, but actually for the construction of the whole family of distributions with the prescribed invariants, see Theorem \ref{thm:instanton3} and Proposition \ref{prop:moduli30}.
	
	For the sake of comparison, we recall in Table \ref{deg 2 foliations} the Cerveau-Lins Neto classification of codimension one foliations of degree 2, comparing with classification scheme outlined in \cite{CCJ1} and further expanded here. Any integrable distribution of degree $2$ must be in the closure of one of those six families: $R(a,b)$ and $L(d_1,..,d_4)$ are the families of \emph{rational} and \emph{logarithmic foliations}; $E(3)$ is the family of \emph{exceptional foliations}; $S(2,3)$ are \emph{pullback foliations}. 
	
	Our thorough study of codimension one distributions of degree 2 on $\p3$, together with previous study of distributions of degrees 0 and 1, allows us to propose two questions. Let $\sD$ a degree $d\geq 0$ distribution  on $\p3$. 
	\begin{enumerate}
		\item Is is true that the singular scheme $\sing(\sD)$ is never contained in a hypersurface of degree $\leq d$? This was already remarked in \cite{CJMdeg1} and we see here that it holds for $d=2$.
		\item If $\sD$ is integrable then is it true that
		\[
		c_2(T_\sD) \leq d^2-d+1?
		\] 
		If we assume that $\sing_1(\sD)$ is reduced, then this bound is an easy consequence of \cite[Proposição 2.6]{FassarellaThesis}. Note also that the equality is satisfied for generic rational foliations $\sD$ of type $R(1,d)$. 
	\end{enumerate}

	\begin{table}[t] \centering
		\begin{tabular}{|c | c | c | c |} \hline
			Foliation & $c_2(T_\sD)$ & $c_3(T_\sD)$ & $T_\sD$  \\ \hline
			$R(1,3)$  & 3  & 8  & stable     \\ \hline
			$R(2,2)$  & 2  & 4  & stable     \\ \hline
			$L(1,1,2)$ & 1  & 2  & str. $\mu$-semistable \\ \hline
			$L(1,1,1,1)$ & 0  & 0  & $\op3\oplus\op3$  \\ \hline
			$E(3)$  & 0  & 0  & $\op3\oplus\op3$ \\ \hline
			$S(2,3)$  & -1  & 0  & $\op3(1)\oplus\op3(-1)$ \\ \hline
		\end{tabular}
		\caption{The first column lists the 6 irreducible components of $\calf(2,3)$ according to Cerveau and Lins Neto \cite{CLN}. Each line provides a description of the tangent sheaves for a generic point lying in each component.}
		\label{deg 2 foliations}
	\end{table}

	\subsection*{Acknowledgments}
	We thank Maurício Corrêa and Thiago Fassarella for enlightening discussions, comments and suggestions. We also thank Jorge Vitório Pereira for pointing out the relation between $\sing_1(\sD)$ and the Gauss map. HG was supported by a PhD grant from CAPES, and the results of his PhD thesis are present here. MJ is supported by the CNPQ grant number 302889/2018-3 and the FAPESP Thematic Project 2018/21391-1. The authors also acknowledge the financial support from Coordenação de Aperfeiçoamento de Pessoal de Nível Superior - Brasil (CAPES) - Finance Code 001.

	%%%%%%%%%%%%%%%%%%%%%%%%%%%%%%%%%%%%%%%%%%%%%%
	%%%%%%%%%%%%%%%%%%%%%%%%%%%%%%%%%%%%%%%%%%%%%%

	\section{Preliminaries and notation} \label{sect:pre}
	
	We begin by setting up the notation and nomenclature to be used in the rest of the paper.
	
	\subsection{Codimension one distributions on \texorpdfstring{$\p3$}{P3}}
	
	A codimension one distribution on $\p3$ is given by an exact sequence
	\begin{equation}\label{seq:dist-p3}
		\sD ~:~ 0 \longrightarrow T_\sD \longrightarrow \tp3 \stackrel{\omega}{\longrightarrow} \mathscr{I}_Z(d+2) \longrightarrow 0,
	\end{equation}
	where $d\ge0$ is called the \emph{degree} of $\sD$, and $Z$ is its \emph{singular scheme}. The last map is given by a global section $\omega \in H^0(\Omega_{\p3}^1(d+2))$ which can be seen, from the Euler sequence, as a homogeneous polynomial $1$-form  $\omega = \sum_{i=0}^{3}A_i dx_i$, on $\mathbb{C}^4$, such that $\deg(A_i) = d+1$ and the contraction with the Euler radial vector field $\sum_{i=0}^{3} x_i \frac{\partial}{\partial x_i} $ vanishes, i.e., $ \sum_{i=0}^{3}A_i x_i = 0$. Moreover, $Z$ is precisely the vanishing locus of $\omega$ with its natural scheme structure locally given by the $A_i$.
	
	Let $F = \gcd(A_0,A_1,A_2,A_3)$, Then $\omega = F\omega'$ for $\omega'$ a homogeneous $1$-form of lower degree. Since both  $\omega$ and $\omega'$, define the same distribution on the Zariski open subset $\p3 \setminus \{F=0\}$, we will assume throughout that $F=1$, i.e. the $A_i$ are relatively prime. This is equivalent to assuming that $\dim Z \leq 1$.
	
	In general, $Z$ may have both 0- and 1-dimensional components. Letting $U$ be the maximal 0-dimensional subsheaf of $\mathcal{O}_Z$, we obtain the exact sequence
	\begin{equation}\label{seq:sing decomp dist}
		0 \longrightarrow U \longrightarrow \mathcal{O}_Z \longrightarrow \mathcal{O}_C \longrightarrow 0,
	\end{equation}
	where $C$ is a pure 1-dimensional scheme, i.e., a Cohen-Macaulay curve. We also denote it by $\sing_1(\sD):= C$. One can show that \cite[Theorem 3.1]{CCJ1}:
	\begin{align}
		\begin{split} \label{eq:chern-deg,pa}
			c_1(T_\sD) & = 2 - d; \\
			c_2(T_\sD) & = d^2 + 2 - \deg(C); \\
			c_3(T_\sD) & = {\rm length}(U) = 
			d^3 + 2d^2 + 2d -\deg(C)\cdot(3d-2) + 2p_a(C) - 2, 
		\end{split}
	\end{align}
	where $p_a(C)$ denotes the arithmetic genus of $C$.
	
	Since this paper is dedicated to the case $d=2$, we specialize it further. Then $c_1(T_\sD)=0$ and $T_\sD\simeq T_\sD^*$, since $T_\sD$ is a rank two reflexive sheaf. The formulas in display \eqref{eq:chern-deg,pa} simplify to 
	\begin{align} \label{eq:chern sing1 deg2}
		\begin{split}
			c_2(T_\sD) &= 6 - \deg(C); \\
			c_3(T_\sD) &=  18 - 4\deg(C) + 2p_a(C).
		\end{split}
	\end{align}
	
	Since $U$ is supported in dimension zero, $\inext^j(U, \op3) = 0$ for $j \leq 2$, see \cite[Proposition 1.1.6]{HL}. Then \eqref{seq:sing decomp dist} implies that
	\[
	\inext^1( \mathscr{I}_Z(4) , \op3) = \inext^2( \mathcal{O}_Z , \op3(-4)) = \inext^2( \mathcal{O}_C , \op3(-4)) = \omega_C.
	\]
	Dualizing the sequence in display \eqref{seq:dist-p3} and using $d=2$, we obtain
	\begin{equation}\label{seq:zeta}
		0 \longrightarrow \op3(-4) \longrightarrow \Omega^1_{\p3} \longrightarrow T_\sD
		\stackrel{\zeta}{\longrightarrow} \omega_C  \longrightarrow 0.
	\end{equation}
	Computing the cohomology we get the following key lemma; a similar result holds in any given degree. 
	
	\begin{lemma}\label{lem:h2=0}
		Let $\sD$ be a degree $2$ codimension one distribution on $\mathbb{P}^3$ then 
		\begin{enumerate}
			\item $h^0(T_\sD(1)) = h^0(\omega_C(1))$;
			\item $h^2(T_\sD)\leq 1$;
		\end{enumerate}
		and for $p\geq 1$ we have
		\begin{enumerate}
			\setcounter{enumi}{2}
			\item $h^1(T_\sD(p)) = h^1(\omega_C(p))$;
			\item $h^2(T_\sD(p))=0$.
		\end{enumerate}
	\end{lemma}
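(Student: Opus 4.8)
The plan is to extract all four statements from the long exact cohomology sequences attached to \eqref{seq:zeta}. First I would break the four-term exact sequence
\[
0 \lra \op3(-4) \lra \Omega^1_{\p3} \lra T_\sD \stackrel{\zeta}{\lra} \omega_C \lra 0
\]
into two short exact sequences by introducing the intermediate sheaf $\mathcal{K} := \ker\zeta = \im(\Omega^1_{\p3}\to T_\sD)$, obtaining
\[
0 \lra \op3(-4) \lra \Omega^1_{\p3} \lra \mathcal{K} \lra 0
\qquad\text{and}\qquad
0 \lra \mathcal{K} \lra T_\sD \stackrel{\zeta}{\lra} \omega_C \lra 0.
\]
The point is that the cohomology of $\mathcal{K}$ is directly computable from the first sequence via Bott's formula, and the second sequence then transfers that information between $T_\sD$ and $\omega_C$.

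Second, I would compute $H^\bullet(\mathcal{K}(p))$. Twisting the first sequence by $\op3(p)$ and using that line bundles on $\p3$ have vanishing $H^1$ and $H^2$, the associated long exact sequence gives $H^1(\mathcal{K}(p)) \cong H^1(\Omega^1_{\p3}(p))$ for every $p$, while $H^2(\mathcal{K}(p))$ fits into
\[
0 \lra H^2(\mathcal{K}(p)) \lra H^3(\op3(p-4)) \lra H^3(\Omega^1_{\p3}(p)).
\]
By Bott's formula, $H^1(\Omega^1_{\p3}(p))=0$ unless $p=0$ (where it is one-dimensional) and $H^2(\Omega^1_{\p3}(p))=0$ for all $p$. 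Since $H^3(\op3(p-4))=0$ for $p\geq1$, I obtain $H^2(\mathcal{K}(p))=0$ for $p\geq1$; at $p=0$ the same sequence yields $h^2(\mathcal{K})=h^3(\op3(-4))=1$ because $H^3(\Omega^1_{\p3})=0$. I would also record $H^0(\mathcal{K}(1))=0$, which holds since $H^0(\op3(-3))=0$ and $H^0(\Omega^1_{\p3}(1))=0$.

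Third, I would feed these vanishings into the cohomology of the second sequence twisted by $\op3(p)$, using that $\omega_C$ is supported on the curve $C$, so $H^j(\omega_C(p))=0$ for $j\geq2$. For \emph{(i)}, taking $p=1$ with $H^0(\mathcal{K}(1))=0=H^1(\mathcal{K}(1))$ gives $H^0(T_\sD(1))\cong H^0(\omega_C(1))$. For \emph{(iii)} and \emph{(iv)}, taking $p\geq1$ with $H^1(\mathcal{K}(p))=0=H^2(\mathcal{K}(p))$ yields $H^1(T_\sD(p))\cong H^1(\omega_C(p))$ and $H^2(T_\sD(p))\cong H^2(\omega_C(p))=0$. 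For \emph{(ii)}, taking $p=0$ and using $H^2(\omega_C)=0$ exhibits $H^2(T_\sD)$ as a quotient of $H^2(\mathcal{K})$, whence $h^2(T_\sD)\leq h^2(\mathcal{K})=1$.

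The argument is a pair of routine diagram chases; the only delicate point is the systematic use of Bott's formula for $\Omega^1_{\p3}(p)$ together with the bookkeeping at the exceptional twist $p=0$, where $\op3(-4)$ supplies the single unit of $H^2$-cohomology responsible for the bound in \emph{(ii)}. I do not expect any genuine obstacle beyond tracking which cohomology groups vanish, so the main care lies in getting the $p=0$ case exactly right rather than in any conceptual difficulty.
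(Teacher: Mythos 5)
Your proof is correct and is exactly the computation the paper intends: the paper states Lemma \ref{lem:h2=0} as an immediate consequence of ``computing the cohomology'' of the four-term sequence \eqref{seq:zeta}, which is precisely your splitting into two short exact sequences, Bott's formula for $\Omega^1_{\p3}(p)$, and the vanishing $H^{\geq 2}(\omega_C(p))=0$ for the curve $C$. Nothing to correct.
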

	
	Note that since $C$ is a Cohen-Macaulay curve, Serre duality holds and we may use $h^i(\omega_C(p)) = h^{1-i}(\mathcal{O}_C(-p))$ for $i\in {0,1}$ and every $p\in \mathbb{Z}$, see \cite[III Corollary 7.7]{HART:AG} . 
	
	An important invariant of $C$ is its Rao module $M_C = \bigoplus_p H^1(\mathscr{I}_C(p))$. From the sequence \eqref{seq:dist-p3} and using that $U = \mathscr{I}_C /\mathscr{I}_Z$ is zero-dimensional, we get
	\[
	h^1(\mathscr{I}_C (p)) \leq h^2(T_\sD(p-4)) \text{ for } p\in \mathbb{Z}. 
	\]
	Thus we have a restriction on the curves that can appear in the singular loci of degree two distributions.
	
	%----------------------------------------------------
	
	\subsection{The spectrum of a rank 2 reflexive sheaf}\label{subsect:spectrum}
	
	For a normalized rank two reflexive sheaf $F$ on $\p3$, such that $h^0(F(-1)) = 0$, there exists a unique multiset (i.e. repeated elements are allowed) of integers $\{k_1, \dots, k_{c_2(F)}\}$, called the \emph{spectrum} of $F$, that encodes partial information on the cohomology of $F$. It was first defined by Barth and Elencwajg for locally free sheaves and later extended for reflexive sheaves by Hartshorne; we will use \cite[Section 7]{H2} as reference. 
	
	\begin{remark}\label{rem:properties of spectrum}
		Let $F$ be a $\mu$-semistable rank 2 reflexive sheaf on $\p3$ with $c_1(F)=0$ and set $c_2(F)=n$ and $c_3(F)=2l$. If $\{k_1, \dots, k_n\}$ is the spectrum of $F$ then:
		\begin{align}\label{eq:def spectrum}
			\begin{split}
				h^1(F(p)) & = \sum_{i=1}^n h^0(\p1,\op1(k_i+p+1)) \text{ for each } p\leq -1;\\
				h^2(F(p)) &= \sum_{i=1}^n h^1(\p1,\op1(k_i+p+1)) \text{ for each } p\geq -3.
			\end{split}
		\end{align}
		The possible spectra of a sheaf with given Chern classes can be determined via the following properties, see \cite[Propositions 7.2 and 7.3 and Theorem 7.5]{H2}.
		\begin{enumerate}
			\item $\sum_{i=1}^n k_i = - l$;
			\item if $k>0$ belongs to the spectrum, then so do $1,\dots,k$; if, in addition, $F$ is $\mu$-stable then $0$ must also occur;
			\item if $k<0$ belongs to the spectrum, then so do $k,k+1,\dots,-1$; 
			\item if $F$ is $\mu$-stable then either $0$ occurs or $-1$ occurs at least twice;
			\item If $F$ is locally free, then the spectrum is symmetric, i.e., if $k_i$ occurs, then so does $-k_i$.
		\end{enumerate}
		
	\end{remark}
	
	As we have seen in Lemma \ref{lem:h2=0}, being the tangent sheaf of a codimension one distribution imposes certain restrictions on $h^1(F(p))$ and $h^2(F(p))$, which can in turn be used to rule out several possible spectra for sheaves with given Chern classes. Studying the spectrum of tangent sheaves also leads to a precise knowledge of the associated singular schemes, and vice-versa, allowing us to collect the information displayed in Table \ref{deg 2 table}. Here is an immediate application.

	\begin{lemma}\label{lem:bound spectrum}
		Let $\sD$ be a codimension one distribution on $\mathbb{P}^3$ of degree $2$ and let $\{k_1, \dots, k_n\}$ be the spectrum of $T_\sD$. Then $k_i \geq -3$ for every $i$ and $-3$ occurs at most once. Moreover, $1$ occurs in the spectrum if and only if $\sing(\sD)$ is contained in a quadric surface. 
	\end{lemma}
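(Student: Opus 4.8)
The plan is to read off both assertions from the cohomological description of the spectrum in Remark \ref{rem:properties of spectrum}, combined with the vanishing results of Lemma \ref{lem:h2=0}.

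For the first assertion, I would evaluate the second formula of \eqref{eq:def spectrum} at $p=0$ (legitimate since $0\geq-3$). Writing $h^1(\p1,\op1(m))=\max\{0,-m-1\}$, this reads
\[
h^2(T_\sD)=\sum_{i=1}^{n}h^1\!\left(\p1,\op1(k_i+1)\right)=\sum_{k_i\leq-3}(-k_i-2).
\]
By Lemma \ref{lem:h2=0}(2) the left-hand side is at most $1$. A single index with $k_i\leq-4$ contributes at least $2$ to this sum, and two indices equal to $-3$ contribute $2$ as well; either possibility forces $h^2(T_\sD)\geq2$, a contradiction. Hence $k_i\geq-3$ for all $i$, and $-3$ occurs at most once.

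For the second assertion, the first step is to convert containment in a quadric into a statement about $T_\sD$. Twisting the defining sequence \eqref{seq:dist-p3} (with $d=2$) by $\op3(-2)$ gives
\[
0\longrightarrow T_\sD(-2)\longrightarrow\tp3(-2)\stackrel{\omega}{\longrightarrow}\mathscr{I}_Z(2)\longrightarrow0,
\]
and a short computation from the Euler sequence shows $H^0(\tp3(-2))=H^1(\tp3(-2))=0$. The associated long exact sequence therefore produces an isomorphism $H^0(\mathscr{I}_Z(2))\simeq H^1(T_\sD(-2))$. Since $\sing(\sD)=Z$ lies on a quadric surface exactly when $h^0(\mathscr{I}_Z(2))\neq0$, the task reduces to deciding when $h^1(T_\sD(-2))$ is nonzero.

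The second step evaluates the first formula of \eqref{eq:def spectrum} at $p=-2\leq-1$. Using $h^0(\p1,\op1(m))=\max\{0,m+1\}$ yields
\[
h^1(T_\sD(-2))=\sum_{i=1}^{n}h^0\!\left(\p1,\op1(k_i-1)\right)=\sum_{k_i\geq1}k_i,
\]
which is nonzero precisely when some $k_i\geq1$. By the second property listed in Remark \ref{rem:properties of spectrum}, a positive value occurs in the spectrum if and only if $1$ does; combining this equivalence with the isomorphism above completes the proof. I expect the only delicate points to be the auxiliary vanishing $H^0(\tp3(-2))=H^1(\tp3(-2))=0$ and the bookkeeping that turns ``some $k_i\geq1$'' into ``$1$ occurs''---for which the initial-segment property is essential---while the remaining steps are direct substitutions into the spectrum formulas.
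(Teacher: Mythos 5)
Your proof is correct and follows essentially the same route as the paper: both parts rest on Lemma \ref{lem:h2=0} combined with the spectrum formulas \eqref{eq:def spectrum}, with the quadric criterion obtained by computing cohomology of the twisted defining sequence to identify $H^0(\mathscr{I}_Z(2))\simeq H^1(T_\sD(-2))$ and then translating $h^1(T_\sD(-2))\neq0$ into the occurrence of $1$ in the spectrum. The only difference is that you spell out the details (the vanishing of $H^0(\tp3(-2))$ and $H^1(\tp3(-2))$, and the use of the initial-segment property) that the paper leaves implicit.
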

	
	\begin{proof}
		From Lemma \ref{lem:h2=0} we get $h^2(T_\sD) \leq 1$; then from \eqref{eq:def spectrum} we must have $k_i \geq -3$ for every $i$ and $-3$ occurs at most once. Computing the cohomology for the sequence \eqref{seq:dist-p3} we see that $h^1(T_\sD(-2)) \neq 0 $ if and only if $\sing(\sD)$ is contained in a surface of degree at most $2$. On the other hand, $h^1(T_\sD(-2)) \neq 0 $ if and only if $1$ occurs in the spectrum.
	\end{proof}
	
	In Sections \ref{sect:c2<=1} through \ref{sect:c2=6} we will use Remark \ref{rem:properties of spectrum} to compute the possible spectra for tangent sheaves of distributions of degree 2; they are listed in the third column of Table \ref{deg 2 table}. In particular, we check that $1$ never appears in any spectrum, thus the second part of Lemma \ref{lem:bound spectrum} guarantees that $\sing(\sD)$ is never contained in a quadric surface, as stated in the \hyperref[maintheorem]{Main Theorem}.

	%----------------------------------------------------
	
	\subsection{Foliations by curves on \texorpdfstring{$\p3$}{P3}}
	
	Consider now codimension $2$ distributions on $\p3$; they are given by an exact sequence
	\begin{equation} \label{fbc1}
		\sG ~:~ 0 \longrightarrow \op3(1-k) \stackrel{\nu}{\longrightarrow} \tp3 \longrightarrow N_{\sG} \longrightarrow 0;
	\end{equation}
	where $N_{\sG}$, called the normal sheaf, is torsion free. The integer $k\ge0$ is called the \emph{degree} of the distribution determined by the vector field $\nu\in H^0(\tp3(k-1))$, up to scalar factor. Since every one-dimensional distribution is automatically integrable, we call \eqref{fbc1} a \emph{foliation by curves of degree $k$} on $\p3$. For a detailed account on foliations by curves we refer to \cite{CaJS, CJM}. We will be mostly interested in the cases $k=0$ and $k=1$.
	
	In order to make the role of the singular locus more explicit, we dualize the sequence in display \eqref{fbc1} to obtain
	\begin{equation}\label{fol curves deg 1}
		0 \longrightarrow N_\sG^* \longrightarrow \Omega^1_{\p3} \stackrel{\iota_\nu}{\longrightarrow} \mathscr{I}_W(k-1) \longrightarrow 0 ,
	\end{equation}
	where $W$ is the scheme of zeros of the vector field $\nu$. From the Euler sequence, $\nu$ is an equivalence class of homogeneous polynomial vector fields $\widetilde{\nu} = \sum_{i=0}^3 F_i\frac{\partial}{\partial x_i}$ with $\deg(F_i) = k$, on $\mathbb{C}^4$, modulo multiples of the radial vector field, i.e., $\widetilde{\nu}_1$ and $\widetilde{\nu}_2$ represent $\nu$ if and only if $\widetilde{\nu}_1 - \widetilde{\nu}_2 = P\left(\sum_{i=0}^{3} x_i \frac{\partial}{\partial x_i} \right)$ for some polynomial $P$.  We can choose $\widetilde{\nu}$ such that ${\rm div}(\widetilde{\nu}) =\sum_{i=0}^3 \frac{\partial F_i}{\partial x_i} = 0$; this choice is unique. Given any representative $\widetilde{\nu}$ of $\nu$, one can show, by comparing with \eqref{fol curves deg 1}, that $W$ is the locus of points where $\widetilde{\nu}$ and $\sum_{i=0}^{3} x_i \frac{\partial}{\partial x_i}$ are parallel. More precisely, $W$ is given by the homogeneous ideal generated by the $2\times 2$ minors of the matrix
	\begin{equation}\label{eq:minors}
		\begin{bmatrix}
			F_0 & F_1 & F_2 & F_3 \\ x_0 & x_1 & x_2 & x_3
		\end{bmatrix}.
	\end{equation}
	
	As for codimension one distributions, $W$ may not be pure dimensional, so we have an exact sequence
	\begin{equation}\label{seq:sing decomp fol by curves}
		0 \longrightarrow R \longrightarrow \mathcal{O}_W \longrightarrow \mathcal{O}_Y \longrightarrow 0,
	\end{equation}
	where $R$ is a 0-dimensional sheaf and $Y$ is a curve; we set $Y:=\sing_1(\sG)$ and $R:=\sing_0(\sG)$. The foliation by curves $\sG$ is said to be generic if $\dim W=0$, that is, $\nu$ only vanishes at points. 
	
	When $k=0$, the picture is very simple. The foliation $\sG$ is given by a constant vector field $\nu$ vanishing at only one point, and $N_\sG$ is a stable rank 2 reflexive sheaf given by a sequence of the following form
	$$ 0 \longrightarrow \op3 \longrightarrow \op3(1)^{\oplus 3} \longrightarrow N_\sG \longrightarrow 0; $$
	note that $N_\sG^*\simeq N_\sG(-3)$; see also \cite[Section 5]{CJM}. 
	
	When $k=1$ we have that $\sG$ is given by a linear vector field $\nu = \sum_{i,j =0}^3 a_{ij} x_j\frac{\partial }{\partial x_i}$ that we can choose such that the matrix $A = (a_{ij})$ has vanishing trace; this choice of $\nu$ defining $\sG$ is unique up to scalar multiples. Thus $\sG$ can be described by the structure of the matrix $A$, and we can provide a complete classification of foliation by curves of degree one.
	
	First, recall that a torsion free sheaf $E$ on $\pn$ is $\mu$-semistable if for every saturated subsheaf $F\subset E$,
	\[
	\frac{c_1(F)}{\rk F} \leq \frac{c_1(E)}{\rk E}.
	\]
	the sheaf $E$ is $\mu$-stable if it is $\mu$-semistable and the inequalities are always strict. If $E$ is a rank 2 reflexive sheaf, then we only need to consider $F = \opn(l)$ for some $l\in\mathbb{Z}$.

	\begin{theorem}\label{thm:fbc deg 1}
		Let $\sG$ be a foliation by curves of degree 1 on $\p3$. Then 
		\begin{enumerate}
			\item $N_\sG^*$ is a $\mu$-stable rank 2 reflexive sheaf with Chern classes $c_2(N_\sG^*) = 6$ and $c_3(N_\sG^*) = 4$, and $W$ is a $0$-dimension scheme of length $4$;
			\item $N_\sG^*$ is a strictly $\mu$-semistable reflexive sheaf with Chern classes $c_2(N_\sG^*) = 5$ and $c_3(N_\sG^*) = 2$ and given by an extension
			\begin{equation}\label{O-N-line}
				0\longrightarrow \op3(-2) \longrightarrow N_\sG^* \longrightarrow \mathscr{I}_L(-2) \longrightarrow 0,
			\end{equation}
			where $L$ is a line. In this case, $W = L' \cup p$ where $L'$ is a line and $p$ is a $0$-dimensional scheme of length $2$, possibly embedded in $L'$. Moreover, $p\subset L$ and $W$ is nonplanar; in particular, $L\neq L'$;
			\item $N_\sG^*=\op3(-2)\oplus\op3(-2)$, in particular $c_2(N_\sG^*) = 4$ and $c_3(N_\sG^*) = 0$, and $W$ consists of either 2 skew lines or a double line of genus $-1$.
		\end{enumerate}
	\end{theorem}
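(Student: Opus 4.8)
The plan is to read every invariant off the trace-free matrix $A=(a_{ij})$ that represents $\nu$ up to scalar. Since the two rows of the matrix in \eqref{eq:minors} are $\widetilde\nu=Ax$ and the radial field, the support of $W$ consists of the $[v]\in\p3$ with $Av$ proportional to $v$; thus $|W|$ is the union of the projectivized eigenspaces of $A$, with scheme structure cut out by the $2\times2$ minors. I would first observe that $\dim W\le1$ forces every eigenvalue to have geometric multiplicity at most $2$: a $3$-dimensional eigenspace would make $\widetilde\nu$ parallel to the radial field along a plane, so $\nu$ would vanish on a divisor and the cokernel $N_\sG$ in \eqref{fbc1} would fail to be torsion free. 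Hence $A$ falls into finitely many Jordan types, and $\sing_1(\sG)$ is the union of those projectivized eigenspaces that are lines, with the induced scheme structure.

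The engine for (semi)stability is an identification of $h^0(N_\sG^*(2))$. By \eqref{fol curves deg 1} a section of $N_\sG^*(2)$ is a form $\alpha\in H^0(\Omega^1_{\p3}(2))$ with $\iota_\nu\alpha=0$; writing $\alpha=x^{\mathsf T}C\,dx=\sum_{i<j}c_{ij}(x_i\,dx_j-x_j\,dx_i)$ identifies $H^0(\Omega^1_{\p3}(2))$ with skew matrices $C$, and $\iota_\nu\alpha=x^{\mathsf T}CA\,x$ vanishes exactly when $A^{\mathsf T}C=CA$. Therefore $h^0(N_\sG^*(2))=\dim\{C\ \text{skew}:A^{\mathsf T}C=CA\}$, and since $c_1(N_\sG^*)=-4$ each nonzero such $C$ is precisely an inclusion $\op3(-2)\hookrightarrow N_\sG^*$ of equal slope; so $N_\sG^*$ is $\mu$-stable iff this space is $0$, and its dimension distinguishes (ii) from (iii). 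I would pair this with the Chern bookkeeping from \eqref{fol curves deg 1} with $k=1$: one gets $c_1(N_\sG^*)=-4$, $c_2(N_\sG^*)=6-\deg\sing_1(\sG)$, and $c_3(N_\sG^*)$ determined by $\deg\sing_1(\sG)$, $p_a(\sing_1(\sG))$ and $\operatorname{length}\sing_0(\sG)$ through \eqref{seq:sing decomp fol by curves}; when $W$ is $0$-dimensional its length is $c_3(\tp3)=4$.

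The three cases then follow from a trichotomy in the eigenspace data. If $A$ is regular (each eigenspace a line, as for four distinct eigenvalues or a single Jordan block per eigenvalue), then $\dim W=0$, $\operatorname{length}W=4$; writing $A^{\mathsf T}=SAS^{-1}$ with $S$ symmetric shows every $C$ with $A^{\mathsf T}C=CA$ equals $S\,q(A)$ and is symmetric, so no skew solution survives and $N_\sG^*$ is $\mu$-stable with $(c_2,c_3)=(6,4)$: this is (i). If some eigenspace has dimension $2$, the minors show $\sing_1(\sG)$ is either a single (reduced) line $L'$, whence $c_2=5$, or has degree $2$, whence $c_2=4$. In the first case the skew solution space is $1$-dimensional; the unique $\op3(-2)\hookrightarrow N_\sG^*$ has torsion-free quotient whose Chern classes force it to be $\mathscr{I}_L(-2)$ for a line $L$, giving \eqref{O-N-line} and strict $\mu$-semistability, with the residual zeros of $\nu$ assembling into the length-$2$ scheme $p$: this is (ii), realized by $\operatorname{diag}(\alpha,\alpha,\beta,\gamma)$ and by the coincidences $J_2(\lambda)\!\oplus\!\lambda\!\oplus\!\nu$, $J_2(\lambda)\!\oplus\!\mu\!\oplus\!\mu$, $J_3\!\oplus\!J_1$. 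In the second case the only options are $\operatorname{diag}(\lambda,\lambda,-\lambda,-\lambda)$ with $\lambda\ne0$ (two skew lines) and the nilpotent $J_2\!\oplus\!J_2$ (a double line, a flat limit of two skew lines, hence of genus $-1$); here the skew solution space is $2$-dimensional, the two inclusions $\op3(-2)\hookrightarrow N_\sG^*$ generate a subsheaf with torsion-free quotient of vanishing Chern classes, and so $N_\sG^*\cong\op3(-2)^{\oplus2}$: this is (iii).

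The main obstacle is the scheme-theoretic analysis of $W$ in the non-generic normal forms, not the sheaf theory, which is forced once $W$ is understood. I expect the delicate points to be: isolating the embedded length-$2$ scheme $p$ and verifying the incidences of (ii)—that $p\subset L$, that $L\ne L'$, and indeed that $W$ is nonplanar—directly from the minor ideals; and separating the reduced-line case from the double-line case in (iii), together with the computation that the double line has arithmetic genus $-1$. Once these local ideal computations are carried out for each Jordan type, the stability dichotomy coming from the skew-commutation space together with the Chern-class formulas assembles into the stated classification.
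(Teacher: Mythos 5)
Your proposal is correct in substance, and while it shares the paper's skeleton, its sheaf-theoretic part follows a genuinely different route. Both arguments begin identically: $W$ is supported on the projectivized eigenspaces of the trace-free matrix $A$, eigenspaces of dimension $\geq 3$ are excluded because $N_\sG$ would fail to be torsion free, and the Chern classes of $N_\sG^*$ are read off from the degree and genus of $\sing_1(\sG)$ (the paper quotes \cite[Theorem 4.1]{CJM} for this bookkeeping, as you implicitly do). The divergence is in how (semi)stability and the structure of $N_\sG^*$ are derived. The paper obtains $\mu$-semistability from $h^0(N_\sG^*(1))\leq h^0(\Omega^1_{\p3}(1))=0$, gets $\mu$-stability in case (i) from a terse ``dualize \eqref{fol curves deg 1}'' claim, produces the extension \eqref{O-N-line} --- together with $p\subset L$ and, via $h^1(N_\sG^*(1))=0$, the nonplanarity of $W$ --- by applying \cite[Lemma 2.1]{Ch} to $N_\sG^*(2)$, and in case (iii) invokes the fact that a $\mu$-semistable sheaf with vanishing Chern classes is trivial. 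You instead compute $H^0(N_\sG^*(2))$ concretely as $\{C \text{ skew}: A^{\mathsf T}C=CA\}$ (this identification is correct, and is in fact exactly what makes the paper's ``dualizing'' step precise) and read everything off this space: it is zero for nonderogatory $A$ by the symmetric-similarity argument $A^{\mathsf T}=SAS^{-1}$, hence stability in case (i); it is one-dimensional in case (ii), the unique section yielding \eqref{O-N-line}; it is two-dimensional in case (iii), the two sections yielding the splitting. Your route is self-contained and elementary, avoiding Chang's classification and the triviality criterion, and it produces explicit destabilizing $1$-forms per Jordan type; the paper's route is shorter and delivers the incidence statements ($p\subset L$, $W$ nonplanar) structurally, without the case-by-case ideal computations you defer to the end.

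Two details you should make explicit in a write-up, since your argument silently relies on them. First, the quotients in cases (ii)--(iii) are torsion free only because the distinguished sections of $N_\sG^*(2)$, viewed inside $\Omega^1_{\p3}(2)$, vanish exactly on $\mathbb{P}(\ker C)$, which has codimension $2$; equivalently, record that $h^0(N_\sG^*(1))=0$ because $N_\sG^*\subset\Omega^1_{\p3}$ --- this is also what upgrades ``not $\mu$-stable'' to ``strictly $\mu$-semistable,'' a point your proposal never addresses directly. Second, in case (iii) the map $\op3(-2)^{\oplus2}\to N_\sG^*$ given by your two skew solutions is an isomorphism only after checking that the corresponding forms are generically linearly independent (so its determinant is a nonzero constant) and that $c_3(N_\sG^*)=0$ forces the punctual cokernel to vanish; both are immediate for the two normal forms $\operatorname{diag}(\lambda,\lambda,-\lambda,-\lambda)$ and nilpotent $J_2\oplus J_2$, but they are the actual content of ``generate a subsheaf with torsion-free quotient.''
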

	\begin{proof}
		Let $A$ be the matrix associated with a vector field $\nu\in H^0(\tp3)$, as described above. 
		We argue that $\sing(\sG)$ corresponds to the eigenspaces of $A$ counted with multiplicities. Indeed, the minors of the matrix \eqref{eq:minors}, for $\nu = \sum_{i,j =0}^3 a_{ij} x_j\frac{\partial }{\partial x_i}$, vanish if and only if $(x_0,x_1,x_2,x_3)^T$ is an eigenvector of $A$. Hence $W = \sing(\sG)$ is a scheme supported at the projectivized set of eigenvectors of $A$. We have three cases:
		\begin{enumerate}
			\item $A$ has only eigenspaces of dimension $1$, then $\dim W = 0$;
			\item $A$ has one eigenspace of dimension $2$, then $\sing_1(\sG)$ is supported on a line $L_1$; 
			\item $A$ has two eigenspaces of dimension $2$, then $W$ is supported on two skew lines $L_1 \sqcup L_2$.
		\end{enumerate}
		If $A$ has an eigenspace of dimension $3$, then the vector field vanishes along a plane, and therefore the $\sG$ is not saturated. Finally, $A$ has an eigenspace of dimension $4$ if and only if $A=0$ since it has vanishing trace. 
		
		Due to \cite[Theorem 4.1]{CJM}, we have that $c_1(N_\sG^*) = -4$, $c_2(N_\sG^*) = 6- \deg( Y)$ and $c_3(N_\sG^*) = 2 + 2p_a(Y) $ (or $c_3(N_\sG^*) = 4$ if $Y = \emptyset$). It follows from the sequence \eqref{fol curves deg 1} that $h^0(N_\sG^\ast(1)) = 0$, hence $N_\sG^\ast$ is $\mu$-semistable.

		In the first case $\dim W = 0$. Then dualizing \eqref{fol curves deg 1} we see that $h^0(N_\sG^\ast(2)) = 0$ hence $N_\sG^\ast$ is $\mu$-stable. We also have that $c_2(N_\sG^\ast) = 6$ and $c_3(N_\sG^\ast) = 4$.
		
		In the second case, first assume $\sing_1(\sG) = L_1$. Then it follows that $c_2(N_\sG^\ast) = 5$, $c_3(N_\sG^\ast) = 2$, and $W = L_1 \cup p $ with $p$ being a 0-dimensional subscheme of length $2$. Then \cite[Lemma 2.1]{Ch} applied to $N_\sG^\ast(2)$ shows that $N_\sG^\ast$ is strictly $\mu$-semistable and fits in the exact sequence \eqref{O-N-line}. In particular, $L$ must contain the scheme $p$. It also follows from \eqref{O-N-line} that $h^1(N_\sG^\ast(1)) = 0$, and then $\eqref{fol curves deg 1}$ implies that $W$ is not planar.

		Still in the second case, assume that $\sing_1(\sG)$ has a nonreduced structure along $L_1$. Checking the Jordan normal forms we see that it is only possible if, up to a linear change of coordinates,
		\[
		A  = \begin{bmatrix}
			0 &1 &0 &0 \\ 0 &0 &0 &0 \\ 0 &0 &0 &1 \\ 0 &0 &0 &0 \\
		\end{bmatrix}
		\]
		hence $W$ is given by the homogeneous ideal $(x_1^2,x_1x_3,x_3^3, x_1x_2 -x_0x_3 )$ and $W$ is a double line of genus $-1$. Then $c_2(N_\sG^\ast) = 4$ and $c_3(N_\sG^\ast) = 0$ and $N_\sG^\ast(2)$ is a $\mu$-semistable sheaf with trivial Chern classes, hence it is the trivial vector bundle and $N_\sG^\ast = \op3(-2) \oplus \op3(-2)$. 
		
		In the third case, as in the last paragraph, $c_2(N_\sG^\ast) = 4$ and $c_3(N_\sG^\ast) = 0$. Hence $N_\sG^\ast = \op3(-2) \oplus \op3(-2)$ as well.
	\end{proof}

	%----------------------------------------------------
	
	\subsection{Stability for distributions and sub-foliations}
	
	In order to describe codimension one distributions, we analyse the stability of the possible tangent sheaves. If $\sD$ is a degree $d$ codimension one distribution on $\p3$ such that $T_\sD$ is not $\mu$-stable, there exists a line sub-bundle $\mathcal{O}_\p3(l) \subset T_\sD$ such that $2l \geq 2-d$. On the other hand, $\mathcal{O}_\p3(l) \hookrightarrow T_\sD \hookrightarrow \tp3$ can only exist if $l \leq 1$. For $d=2$, we must have
	$l =0$ or $1$; the later only occurs if $T_\sD$ is not $\mu$-semistable. These line sub-bundles induce sub-distributions of codimension two; next, we will describe them.

	If $\sD$ is a codimension one distribution of degree two then $-1 \leq c_2(T_\sD) \leq 6$. Indeed, we know from \eqref{eq:chern sing1 deg2} that $c_2(T_\sD) = 6 - \deg (\sing_1(\sD))$ and, on the other hand, the restriction of $\sD$ to a general plane $H$ is singular at $\sing_1(\sD) \cap H$, whence $\deg( \sing_1(\sD))\leq 7$; see \cite[p. 28]{CCJ1}.

	If $T_\sD$ is not $\mu$-semistable then $h^0(T_\sD(-1)) \neq 0$ and, owing to \cite[Lemma 4.3]{CCJ1}, it must split as sum of line bundles and, as $T_\sD \hookrightarrow \tp3$, the only possibility is $T_\sD = \op3(1) \oplus \op3(-1)$. This covers the case $c_2(T_\sD) = -1$ and, conversely, $T_\sD$ is $\mu$-semistable if $c_2(T_\sD) \geq 0$. 
	
	If $c_2(T_\sD) = 0$ then $T_\sD = \op3 \oplus \op3$. If $c_2(T_\sD) = 1$ then $c_3(T_\sD)= 0$ or $2$, owing to \cite[Theorem 8.2]{H2}. For $c_3(T_\sD)= 0$ we have that $T_\sD$ must be a null correlation bundle, which is stable; for $c_3(T_\sD)= 1$ \cite[Lemma 2.1]{Ch} shows that $T_\sD$ is strictly $\mu$-semistable. 
	
	If $c_2(T_\sD)\ge4$, then it follows from \cite[Proposition 6.3]{CCJ1} that the tangent sheaf $T_\sD$ is stable. We will now complete this picture, and the first claim of \hyperref[maintheorem]{Main Theorem}, with the following result. 
	
	\begin{theorem}\label{thm:stability}
		Let $\sD$ be a codimension one distribution on $\p3$ of degree 2. The stability of $T_\sD$ is described as follows:
		\begin{enumerate}
			\item $T_\sD$ is not $\mu$-semistable if and only if $c_2(T_\sD) = -1$; if that is the case $T_\sD = \op3(1) \oplus \op3(-1)$;
			\item If $T_\sD$ is strictly $\mu$-semistable then $(c_2(T_\sD), c_3(T_\sD)) = (0,0)$, $(1,2)$ or $(2,4)$;
		\end{enumerate}
		For all the other cases $T_\sD$ is $\mu$-stable.
	\end{theorem}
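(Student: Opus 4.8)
My plan is to build on the preceding discussion, which already reduces the statement to the rows with $c_2(T_\sD)\in\{2,3\}$: it is recorded there that $T_\sD$ is $\mu$-semistable exactly when $c_2(T_\sD)\ge 0$, that $c_2(T_\sD)=-1$ is the only non-semistable case (giving the split sheaf of part (i)), and that $c_2(T_\sD)\ge 4$ forces $\mu$-stability. Since $T_\sD$ is reflexive of rank two with $c_1(T_\sD)=0$, it is $\mu$-stable if and only if $h^0(T_\sD)=0$, and strictly $\mu$-semistable if and only if it carries a nonzero section; so the whole theorem comes down to analysing the sections of $T_\sD$ when $c_2(T_\sD)\in\{2,3\}$.

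First I would show that a nonzero $\nu\in H^0(T_\sD)$ produces a foliation by curves of degree one inside $\sD$. Composing $\nu$ with $T_\sD\hookrightarrow\tp3$ gives a vector field, hence a subsheaf $\op3\hookrightarrow\tp3$. The saturation of $\op3$ in $T_\sD$ is a line bundle $\op3(a)$ with $a\ge 0$ (it contains $\op3$) and $a\le 0$ (by $\mu$-semistability), hence equals $\op3$; as $T_\sD\subset\tp3$ is also saturated, the quotient $N_\sG:=\tp3/\op3$ is torsion-free, sitting in $0\to T_\sD/\op3\to N_\sG\to\mathscr{I}_Z(4)\to 0$ with both outer terms torsion-free. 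Thus $\nu$ defines a genuine degree-one foliation by curves $\sG$ with $T_\sG=\op3$, and Theorem \ref{thm:fbc deg 1} leaves exactly the three possibilities $c_2(N_\sG^*)\in\{6,5,4\}$.

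The crux is to recover $c_2(T_\sD)$ from $\sG$. The tangency $\sG\subset\sD$ is the identity $\iota_\nu\omega=0$, so $\omega$ is a global section of the rank-two reflexive sheaf $N_\sG^*(4)\subset\Omega^1_{\p3}(4)$ (here $\det N_\sG^*=\op3(-4)$). The key step I would carry out is the scheme-theoretic identification of the one-dimensional part of the zero scheme of $\omega$, viewed as a section of $N_\sG^*(4)$, with the Cohen--Macaulay curve $\sing_1(\sD)=C$; the resulting sequence $0\to\op3\to N_\sG^*(4)\to\mathscr{I}_C(4)\to 0$ then gives $\deg C=c_2(N_\sG^*(4))=c_2(N_\sG^*)$, the last equality because $c_1(N_\sG^*)=-4$. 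Comparing with $\deg C=6-c_2(T_\sD)$ from \eqref{eq:chern sing1 deg2} yields $c_2(T_\sD)=6-c_2(N_\sG^*)\in\{0,1,2\}$; in particular $c_2(T_\sD)=3$ can never be strictly semistable, which settles $\mu$-stability for that entire row.

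Finally I would pin down $c_3(T_\sD)$ for the surviving values. The cases $c_2(T_\sD)=0$ and $1$ force $(c_2,c_3)=(0,0)$ and $(1,2)$ by the classification for $c_2\le 1$ recalled above ($T_\sD=\op3\oplus\op3$, respectively the strictly semistable sheaf with $c_3=2$). The case $c_2(T_\sD)=2$ corresponds to $c_2(N_\sG^*)=4$, i.e.\ the locally free sheaf $N_\sG^*=\op3(-2)\oplus\op3(-2)$ of Theorem \ref{thm:fbc deg 1}; then $\omega\in H^0(\op3(2)\oplus\op3(2))$ is a pair of quadrics and $C$ is their complete intersection, of degree $4$ and, by adjunction $\omega_C\simeq\OC$, of arithmetic genus $p_a(C)=1$, whence \eqref{eq:chern sing1 deg2} gives $c_3(T_\sD)=4$. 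This produces the admissible list $\{(0,0),(1,2),(2,4)\}$ and, together with the first paragraph, all three assertions. I expect the main obstacle to be the identification used in the third paragraph: the zero scheme of $\omega$ inside $N_\sG^*(4)$ and the curve cut out by $\tp3\xrightarrow{\omega}\mathscr{I}_Z(4)$ are a priori defined by different ideals, so I must check that their one-dimensional parts coincide with multiplicity and that $\omega$ does not drop rank in codimension one as a section of $N_\sG^*(4)$—the latter following from $\dim\sing(\sD)\le 1$.
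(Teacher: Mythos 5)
Your proposal is sound and runs on the same engine as the paper's proof: for $c_2(T_\sD)\ge 2$, strict $\mu$-semistability is equivalent to $h^0(T_\sD)\ne 0$; a nonzero section induces a degree-one sub-foliation by curves $\sG$ (your saturation argument for the torsion-freeness of $N_\sG$ is the same point as the paper's observation that $(\sigma)_0$ has no divisorial part); and Theorem \ref{thm:fbc deg 1} restricts $c_2(N_\sG^*)$ to $\{4,5,6\}$. Where you genuinely differ is in converting this into a bound on $c_2(T_\sD)$. The paper stays on the tangent side: from $\im(\phi\circ\sigma)^\vee\subset\im\sigma^\vee$ the zero curve $S=(\sigma)_0$, of degree $c_2(T_\sD)$, sits inside $\sing_1(\sG)$, which has degree at most $2$; so $c_2(T_\sD)=2$, $S=\sing_1(\sG)$ has genus $-1$, and Hartshorne's genus formula for zero schemes of sections gives $c_3(T_\sD)=4$. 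You work on the conormal side, reading $\omega$ as a section of $N_\sG^*(4)$ and comparing its zero curve $X$ with $\sing_1(\sD)$; the payoff of your route is that it describes $\sing_1(\sD)$ itself (an elliptic quartic, matching Table \ref{deg 2 table}) rather than $(\sigma)_0$, at the price of the identification you flag, whereas the paper's inclusion is a one-line image comparison.

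That flagged identification is the one real gap in your write-up, but it closes, and in fact you only need an inclusion rather than the scheme-theoretic equality. Since $\op3(4)$ is reflexive, the map $\omega\colon N_\sG\to\op3(4)$ extends uniquely through $N_\sG\to N_\sG^{\vee\vee}$, and the image of the extension is precisely the twisted ideal $\mathscr{I}_X(4)$ of the zero scheme $X$; as this image contains $\im\omega=\mathscr{I}_Z(4)$, you get $X\subseteq Z$, which simultaneously gives the codimension-$\ge 2$ vanishing you wanted and places $X$ inside the singular scheme. Moreover $X$ is Cohen--Macaulay: in $0\to\op3\to N_\sG^*(4)\to\mathscr{I}_X(4)\to 0$ the middle term is reflexive, so a depth count (or \cite[Theorem 4.1]{H2}) shows $\mathcal{O}_X$ has no zero-dimensional subsheaves; hence the maximal zero-dimensional subsheaf $U\subset\mathcal{O}_Z$ maps to zero in $\mathcal{O}_X$ and $X\subseteq\sing_1(\sD)$. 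This inclusion alone yields $6-c_2(T_\sD)=\deg\sing_1(\sD)\ge\deg X=c_2(N_\sG^*)\ge 4$, ruling out $c_2(T_\sD)=3$; and when $c_2(T_\sD)=2$ the chain of inequalities collapses, forcing $c_2(N_\sG^*)=4$, $X=\sing_1(\sD)$, and then your complete-intersection computation of $c_3(T_\sD)=4$ goes through. So the equality you singled out as the main obstacle need not be proved up front; it falls out of the degree count at the end.
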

	
	\begin{proof}The first item follows from the above discussion; we will prove now the second one. Consider a codimension 1 distribution of degree 2
		$$ 
		\sD ~:~ 0 \longrightarrow T_\sD \stackrel{\phi}{\longrightarrow}\tp3 \longrightarrow \mathscr{I}_Z(4) \longrightarrow 0;
		$$
		also from the previous discussion, we may assume $c_2(T_\sD)\geq 2$. If $\sD$ is not $\mu$-stable then $h^0(T_\sD) \neq 0$; so let $\sigma\in H^0(T_\sD)$ be a non trivial section, and let $S:=(\sigma)_0$. Due to $\mu$-semistability, $S$ is a curve of degree $\deg(S)=c_2(T_\sD) \geq 2$; otherwise we could factor out the divisoral part of $S$, yielding a section in $H^0(T_\sD(-1))$. Moreover, $\sigma$ induces a (sub-)foliation by curves of degree 1
		\[
		\sG ~:~ 0 \longrightarrow \op3 \stackrel{\phi\circ\sigma}{\longrightarrow}\tp3 \longrightarrow N_\sG \longrightarrow 0.
		\]
		Since $\im (\phi\circ \sigma)^\vee \subset \im \sigma^\vee$, we see that $S\subseteq Y =\sing_1(\sG)$; in particular, we have $\deg(Y)\ge2$. According to Theorem \ref{thm:fbc deg 1}, we must have $N_\sG ^*=\op3(-2)\oplus\op3(-2)$ and $\sing(\sG) = Y$ consists of two skew lines; otherwise $Y$ would have degree one. As $S$ has degree at least two, we conclude that $S=Y$. Therefore, $c_2(T_\sD) = 2$ and $c_3(T_\sD) = 2p_a(S) - 2 + 8 = 4$.
	\end{proof}
	
	One advantage of the $\mu$-semistability to our classification is to bound the third Chern class. For a $\mu$-stable rank two reflexive sheaf $F$ on $\mathbb{P}^3$ with $c_1(F) = 0$ we have, owing to \cite[Theorem 8.2]{H2}, that
	\begin{equation}\label{eq:bound c3}
		c_3(F) \leq c_2(F)^2 - c_2(F) + 2.
	\end{equation}
	Also recall \cite[Corollary 2.4]{H2} that $c_3 \equiv c_1 c_2 \pmod{2}$, hence $c_3(F)$ is always even. 
	
	%%%%%%%%%%%%%%%%%%%%%%%%%%%%%%%%%%%%%%%%%%%%%%
	%%%%%%%%%%%%%%%%%%%%%%%%%%%%%%%%%%%%%%%%%%%%%%

	\section{Basic constructions}\label{sect:basics}
	
	In the remainder of this paper we will provide the existence or non-existence of distributions according to the given Chern classes. To fulfill our purpose, we will establish in this section the key results needed for the construction of examples of codimension one distributions of degree 2; and some results to discard the impossible cases.
	
	%------------------------------------------------------------------
	
	\subsection{Distributions from foliations by curves}\label{subsect:dist from folbc}
	
	Let $\sG$ be a degree $k$ foliation by curves as in display \eqref{fol curves deg 1} and let $\sigma\in H^0(N_\sG^*(l))$ be a section, for some $l\in \mathbb{Z}$. We assume that $X := (\sigma)_0$, the vanishing locus of $\sigma$, has codimension $2$. We will use $\sigma$ to produce a codimension one distribution.
	
	Now consider the composed monomorphism $\rho \colon \op3(-l)\stackrel{\sigma}{\hookrightarrow} N_\sG^* \hookrightarrow \Omega^1_{\p3}$ and let $F := \coker \rho$. Using the Snake Lemma, we can see that $F$ fits into the following exact sequence
	\begin{equation}\label{seq:one}
		0 \longrightarrow \mathscr{I}_X(l-k-3) \longrightarrow F \longrightarrow \mathscr{I}_W(k-1) \longrightarrow 0,
	\end{equation} 
	where $W = \sing(\sG)$. In particular, since $X$ and $W$ have codimension at least two, $F$ is torsion free. Therefore, we have a codimension one distribution of degree $l-2$ given by the exact sequence
	\[
	\sD ~:~ 0 \longrightarrow F^* \longrightarrow \tp3 \longrightarrow \mathscr{I}_Z(l) \longrightarrow 0,
	\]
	called the \emph{codimension one distribution induced by the pair} $(\sG,\sigma)$. Note that $\inext^1(F,\op3)\simeq \mathcal{O}_Z(l)$; it is also clear that $X \subset Z$.
	
	Dualizing the exact sequence in display \eqref{seq:one} yields the long exact sequence
	\begin{multline}\label{long sqc}
		0\longrightarrow \op3(1-k) \longrightarrow F^* \longrightarrow \op3(k+3-l) \stackrel{\eta}{\longrightarrow} \omega_Y(5-k) \longrightarrow \\
		\longrightarrow \mathcal{O}_Z(l) \longrightarrow \omega_X(k-l+7) \longrightarrow \omega_R \longrightarrow 0. 
	\end{multline}
	where $Y := \sing_1(\sG)$ and $R := \sing_0(\sG)$, noting that 
	$$ \inext^1(\mathscr{I}_W(k-1),\op3)\simeq \omega_Y(5-k). $$
	In addition, the morphism $\eta$ in display \eqref{long sqc} can also be regarded as a section $\eta\in H^0(\omega_Y(l-2k+2))$.

	\begin{proposition}\label{prop: const dist folbc}
		Let $\sG$ be a foliation by curves with $Y=\sing_1(\sG)$, and take a non zero section $\sigma\in H^0(N_\sG^*(l))$ such that $(\sigma)_0$ is a curve; let $\sD$ be the codimension one distribution induced by $(\sG,\sigma)$, and let $\eta\in H^0(\omega_Y(l-2k+2))$ be as above. If $\dim\coker\eta=0$, then
		\[
		\sing_1(\sD)=(\sigma)_0 \text{ , } \sing_0(\sD)=(\eta)_0 \text{ , }
		\]
		\[
		\text{and}~~ 0 \longrightarrow \op3(1-k) \longrightarrow T_\sD \longrightarrow \mathscr{I}_W(k+3-l) \longrightarrow 0.
		\]
	\end{proposition}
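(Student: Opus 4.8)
The plan is to read everything off the long exact sequence \eqref{long sqc}, which already displays $T_\sD=F^\ast$ together with the maps $\eta$, $\op3(1-k)\to F^\ast$ and the tail ending in $\omega_R$. First I would split \eqref{long sqc} into its two halves. Since the image of $F^\ast\to\op3(k+3-l)$ is exactly $\ker\eta$ and the incoming map $\op3(1-k)\to F^\ast$ is injective, the left half yields the short exact sequence
\[
0 \longrightarrow \op3(1-k) \longrightarrow T_\sD \longrightarrow \ker\eta \longrightarrow 0 .
\]
Thus the whole statement reduces to (a) identifying $\ker\eta$ with $\mathscr{I}_W(k+3-l)$ and (b) extracting the singular data from the right half.

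For (a) I would use the hypothesis $\dim\coker\eta=0$. Regarding $\eta$ as a section of $\omega_Y(l-2k+2)$, the condition $\dim\coker\eta=0$ says precisely that $\eta$ does not vanish along any component of $Y=\sing_1(\sG)$, i.e. it is a non-zero-divisor. The morphism $\eta\colon\op3(k+3-l)\to\omega_Y(5-k)$ factors through $\op3(k+3-l)\otimes\mathcal{O}_W$, and being a non-zero-divisor its image is isomorphic to $\mathcal{O}_W(k+3-l)$ with $\coker\eta$ supported in dimension $0$. Hence $\ker\eta=\mathscr{I}_W(k+3-l)$, which gives the asserted sequence for $T_\sD$. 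For (b) the right half of \eqref{long sqc} becomes the four-term exact sequence
\[
0 \longrightarrow \coker\eta \longrightarrow \mathcal{O}_Z(l) \longrightarrow \omega_X(k-l+7) \longrightarrow \omega_R \longrightarrow 0 .
\]
Since $X=(\sigma)_0$ is Cohen--Macaulay of codimension $2$, $\omega_X(k-l+7)$ is a pure $1$-dimensional sheaf, so the $0$-dimensional sheaf $\coker\eta$ must be the maximal $0$-dimensional subsheaf of $\mathcal{O}_Z(l)$; this identifies $\sing_0(\sD)=\supp(\coker\eta)=(\eta)_0$. The pure $1$-dimensional quotient then injects into $\omega_X(k-l+7)$ with $0$-dimensional cokernel, so $\deg\sing_1(\sD)\le\deg X$; combined with the inclusion $X\subseteq Z$ (which forces $X\subseteq\sing_1(\sD)$ as $X$ is pure $1$-dimensional), this gives $\sing_1(\sD)=(\sigma)_0$.

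The step I expect to be most delicate is the identification $\ker\eta=\mathscr{I}_W(k+3-l)$, that is, showing the image of $\eta$ is exactly $\mathcal{O}_W(k+3-l)$ and not merely some finite-colength subsheaf of $\omega_Y(5-k)$. The cleanest way to pin this down is to dualize \eqref{fol curves deg 1}: this produces the identification $N_\sG^{\ast\ast}/N_\sG\cong\omega_Y(5-k)$ and realizes $\eta$ as the restriction to $\op3(k+3-l)=\ker\big(\sigma^\ast\colon N_\sG^{\ast\ast}\to\op3(l)\big)$ of the natural projection $N_\sG^{\ast\ast}\to N_\sG^{\ast\ast}/N_\sG$, so that $\ker\eta=\op3(k+3-l)\cap N_\sG$ inside $N_\sG^{\ast\ast}$. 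The hypothesis $\dim\coker\eta=0$ is exactly what guarantees that this intersection cuts out $W$ with the correct scheme structure; here one uses that the normal sheaf $N_\sG$ of a foliation by curves is torsion free but not reflexive, so the defect $N_\sG^{\ast\ast}/N_\sG$ carries the full singular information of $\sG$. Once this matching is secured, the remaining assertions are routine bookkeeping with the two halves of \eqref{long sqc}.
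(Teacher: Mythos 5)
Your overall strategy is the same as the paper's: everything is read off the long exact sequence \eqref{long sqc}, split into a left half giving the short exact sequence for $T_\sD$ and a right half from which the singular loci are extracted by purity arguments. Your part (b) is, up to reorganization, exactly the paper's own argument --- the paper extracts $0\to\coker\eta\to\mathcal{O}_Z(l)\to\mathcal{O}_{Z'}(l)\to0$ and $0\to\mathcal{O}_{Z'}(l)\to\omega_X(k-l+7)\to\omega_R\to0$, proves $Z'=X$ from the Cohen--Macaulayness of $X$ and $\dim\omega_R=0$, and concludes as you do --- so that part is correct.

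The step you yourself flag as delicate is where your write-up has a genuine flaw: the image of $\eta$ can never be isomorphic to $\mathcal{O}_W(k+3-l)$ unless $W=Y$. The target $\omega_Y(5-k)=\inext^1(\mathscr{I}_W(k-1),\op3)$ is an $\mathcal{O}_Y$-module, i.e.\ it is annihilated by the full ideal $\mathscr{I}_Y$ and not merely by $\mathscr{I}_W$; hence $\eta$ factors through $\mathcal{O}_Y(k+3-l)$ and $\ker\eta\supseteq\mathscr{I}_Y(k+3-l)\supseteq\mathscr{I}_W(k+3-l)$. Equivalently: $\omega_Y(5-k)$ is pure of dimension one, so $\mathcal{O}_W$, which has the nonzero $0$-dimensional subsheaf $R$ whenever $\sing_0(\sG)\neq\emptyset$, cannot embed into it. Under the hypothesis $\dim\coker\eta=0$, a purity-plus-length argument at the generic points of $Y$ gives exactly $\ker\eta=\mathscr{I}_Y(k+3-l)$. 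The same correction applies to your fallback route: the identification $N_\sG^{\ast\ast}/N_\sG\simeq\omega_Y(5-k)$ is correct (and a nice conceptual addition not in the paper), but this defect sheaf sees only $\sing_1(\sG)$, not the full singular scheme --- the points of $R$ are invisible in it --- so the intersection $\op3(k+3-l)\cap N_\sG$ cuts out a subscheme of $Y$ and never acquires the embedded or isolated points of $W$. Thus what your argument actually establishes is
\[
0 \longrightarrow \op3(1-k) \longrightarrow T_\sD \longrightarrow \mathscr{I}_{\sing_1(\sG)}(k+3-l) \longrightarrow 0,
\]
which agrees with the stated sequence precisely when $W=\sing(\sG)$ is Cohen--Macaulay. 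In fairness, this caveat is inherited from the statement itself: the paper's justification of this step is the single phrase that one dualizes \eqref{seq:one}, which is subject to the same remark, and in every application in the paper $W$ is an irreducible curve, so $W=Y$ and nothing is lost. Still, to make your proof airtight you should replace $\mathcal{O}_W$ by $\mathcal{O}_Y$ in the factorization and record the identity $\ker\eta=\mathscr{I}_Y(k+3-l)$.
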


	\begin{proof}
		We will use the same notation as above. Breaking the exact sequence in display \eqref{long sqc} into short exact sequences, we extract the following two:
		\begin{gather}
			0 \longrightarrow \coker\eta \longrightarrow \mathcal{O}_Z(l) \longrightarrow \mathcal{O}_{Z'}(l) \longrightarrow 0; \label{four} \\
			0\longrightarrow \mathcal{O}_{Z'}(l) \longrightarrow \omega_X(k-l+7) \longrightarrow \omega_R \longrightarrow 0;\label{five}
		\end{gather}
		note that $Z' \subset X \subset Z$. As $X$ is Cohen-Macaulay and $R$ has dimension zero, \eqref{five} implies that $Z' = X$ and it follows from \eqref{four} that 
		\[
		\coker\eta = \mathscr{I}_{X/Z} (l).
		\]
		If $\dim\coker\eta=0$ then $X=\sing_1(\sD)$ which implies that $(\eta)_0 = \sing_0(\sD)$. Since $T_\sD\simeq F^*$, the exact sequence in the statement comes from dualizing the exact sequence in display \eqref{seq:one}, using the hypothesis $\dim\coker\eta=0$. 
	\end{proof}
	
	\begin{remark}\label{rem:eta=0}
		We make two observations on the previous argument.
		\begin{enumerate}
			\item $\eta=0$ if and only if $F^*=\op3(1-k)\oplus\op3(k+3-l)$, so that $l-k \geq 2$;
			\item If $\eta\ne0$ and $Y:=\sing_1(\sG)$ is irreducible and reduced, then the hypothesis $\dim\coker\eta=0$ is automatically satisfied.
		\end{enumerate}
	\end{remark}
	
	Also note that the codimension one distribution $\sD$ constructed above, and for which $\dim\coker\eta=0$ holds, has the following invariants:
	\begin{align}
		\begin{split}\label{eq:chern dist from folbc}
			c_1(T_\sD) & = 4-l ;\\
			c_2(T_\sD) & = l(k-1) + 6 - c_2(N_\sG^\ast); \\ 
			c_3(T_\sD) & = c_3(N_\sG^\ast) + (1-k - l)c_2(N_\sG^\ast) + (k^2 + 2k + 3)l - 4.
		\end{split}
	\end{align}
	They can be calculated from \eqref{eq:chern-deg,pa} using that $\sing_1(\sD) = (\sigma)_0$. Since we are interested in degree two distributions, we specialize the above formulas to $l=4$:
	\begin{align}\label{eq:chern dist from folbc deg2}
		\begin{split}
			c_2(T_\sD) & = 4k+2 - c_2(N_\sG^\ast); \\
			c_3(T_\sD) & = c_3(N_\sG^\ast) - (k +3)c_2(N_\sG^\ast) + 4k^2 + 8k + 8. 
		\end{split}
	\end{align}
	
	%----------------------------------------------------
	
	\subsection{Distributions from syzygies} \label{subsect:syzygies}
	
	Another way to construct codimension one distributions, maybe the most traditional one, is to give an explicit twisted $1$-form that defines it. We will see how to construct $1$-forms with specified vanishing locus, so that the distribution has the desired invariants. We will proceed by studying some homogeneous ideals as in \cite[Section 4]{CJMdeg1}.
	
	Recall that a degree $d$ codimension one distribution $\sD$ may be given by a homogeneous $1$-form $\omega_\sD = A_0 dx + A_1dy + A_2 dz + A_3dw$. The singular scheme $Z$ of $\sD$ is defined by the saturated ideal
	\[
	I_Z = I^{sat} = \bigoplus_{l\in \mathbb{Z}} H^0\! \left(\mathscr{I}_Z(l)\right),
	\]
	where $I = (A_0, A_1,A_2,A_3) \subset \mathbb{C}[x,y,z,w]$. As the $A_j$ have degree $d+1$, restrictions are imposed on the possible subschemes of $\mathbb{P}^3$ that can fit into the singular loci of distributions. 
	
	\begin{lemma}\label{lem:incsat}
		Let $C\subset \mathbb{P}^3$ be a subscheme with saturated ideal $I_C$ and let $D\supset C$ be the subscheme defined by $(I_C)_{\leq d+1}$, the ideal generated by the elements of $I_C$ of degree $\leq d+1$. If $\sD$ is a distribution as above and $C\subset Z$, then $D\subset Z$.
	\end{lemma}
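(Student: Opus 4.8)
The plan is to reduce the scheme-theoretic containment $D\subseteq Z$ to a single inclusion of homogeneous ideals in $S:=\mathbb{C}[x,y,z,w]$, namely $I\subseteq \ID$, where $I=(A_0,A_1,A_2,A_3)$ is the ideal generated by the coefficients of $\omega_\sD$ and $\ID=\big((I_C)_{\le d+1}\big)$ is the ideal defining $D$. Once this inclusion is in hand, $D\subseteq Z$ follows formally from the order-reversing correspondence between homogeneous ideals and closed subschemes of $\p3$, so the whole argument is quite short.

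First I would locate the generators $A_0,\dots,A_3$ inside $I_C$. Since $C\subseteq Z$ and both schemes are presented by their saturated ideals, the containment of schemes is equivalent to $I_Z\subseteq I_C$; combining this with $I\subseteq I^{sat}=I_Z$ shows that every $A_j$ lies in $I_C$. Each $A_j$ has degree $d+1$, hence $A_j\in (I_C)_{\le d+1}$ and therefore $A_j\in\ID$. Consequently $I=(A_0,\dots,A_3)\subseteq\ID$, which is exactly the inclusion sought.

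It then remains to pass from ideals to schemes. The inclusion $I\subseteq\ID$ induces a surjection $S/I\twoheadrightarrow S/\ID$, and applying $\operatorname{Proj}$ gives a closed immersion $D=\operatorname{Proj}(S/\ID)\hookrightarrow\operatorname{Proj}(S/I)$. Because the projective scheme cut out by $I$ agrees with the one cut out by its saturation $I_Z$, we have $\operatorname{Proj}(S/I)=Z$, and thus $D\subseteq Z$.

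The only delicate point — and the place where one should resist being sloppy — is that $Z$ is the scheme defined by the possibly non-saturated ideal $I$ rather than by $I_Z$. The equality $\operatorname{Proj}(S/I)=\operatorname{Proj}(S/I_Z)=Z$ is precisely what renders saturation irrelevant: one never needs to estimate the saturation degree of $I$ or of $\ID$, nor to decide whether $\ID$ is itself saturated. Beyond keeping the inclusions $I\subseteq I_Z\subseteq I_C$ and $I\subseteq\ID$ pointing in the correct directions, there is no genuine obstacle.
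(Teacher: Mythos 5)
Your proposal is correct and follows essentially the same route as the paper: both arguments chain the inclusions $(A_0,\dots,A_3)\subseteq I_Z\subseteq I_C$, use the degree bound $\deg A_j=d+1$ to conclude $(A_0,\dots,A_3)\subseteq (I_C)_{\leq d+1}$, and then pass from this ideal inclusion to the scheme inclusion $D\subseteq Z$. The only cosmetic difference is the last step, where the paper invokes the inclusion of saturations while you invoke $\operatorname{Proj}$ being insensitive to saturation — these are the same observation.
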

	
	\begin{proof}
		Since $C\subset Z$ we have $I_Z \subset I_C$; in particular $(A_0, A_1,A_2,A_3)\subset I_C$. In fact,
		$(A_0, A_1,A_2,A_3)\subset (I_C)_{\leq d+1}$ and the result follows from the inclusion of the saturations.
	\end{proof}
	
	A direct consequence of this lemma is a bound on the genus of double and triple lines that can be included in the singular scheme of a distribution.
	
	\begin{corollary}\label{cor:2line}
		Let $\sD$ be a degree $d$ distribution on $\mathbb{P}^3$ and let $C$ be a double line of genus $g\leq 0$. If $g<-d$ then $\sD$ is singular along the second infinitesimal neighborhood of $C_{red}$, i.e., the curve defined by $\left(I_{C_{red}}\right)^2$.
	\end{corollary}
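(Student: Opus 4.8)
The plan is to reduce the statement to a computation of the low--degree part of the saturated ideal $I_C$ and then invoke Lemma \ref{lem:incsat}. Since $C$ sits inside the singular scheme, $C\subset Z$, the lemma tells us that the subscheme cut out by $(I_C)_{\le d+1}$ is again contained in $Z$. Thus it suffices to prove that, under the hypothesis $g<-d$, one has the equality of ideals $(I_C)_{\le d+1}=(I_{C_{red}})^2$, for then the subscheme defined by $(I_C)_{\le d+1}$ is exactly the second infinitesimal neighborhood of $C_{red}$ and the corollary follows immediately. To organize the bookkeeping I would fix homogeneous coordinates so that $L:=C_{red}=\{x=y=0\}$, whence $I_L=(x,y)$ and $(I_{C_{red}})^2=(x^2,xy,y^2)$.

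The heart of the matter is to locate the minimal generators of $I_C$. As $C$ is a Cohen--Macaulay double structure on $L$ (a ribbon), one has $(I_L)^2\subseteq I_C\subseteq I_L$, and $\mathcal{L}:=I_L/I_C$ is a line bundle on $L\cong\p1$. The three quadrics $x^2,xy,y^2$ already generate $(I_L)^2$, so I only need to control $I_C/(I_L)^2$, which by construction is the kernel of the surjection of sheaves on $L$
\[
N^*_{L/\p3}=\op1(-1)^{\oplus 2}\longrightarrow \mathcal{L},
\]
and is therefore itself a line bundle. The genus enters through Euler characteristics: from $0\to\mathcal{L}\to\OC\to\mathcal{O}_L\to 0$ one gets $p_a(C)=-1-\deg\mathcal{L}$, hence $\deg\mathcal{L}=-1-g$ and a degree count in the displayed sequence gives $I_C/(I_L)^2\cong\op1(g-1)$. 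As a graded $\op1$-module this is free of rank one, generated by a single element in degree $1-g$; concretely, after a change of coordinates $I_C=(x^2,xy,y^2,\,x\,p-y\,q)$ with $p,q$ coprime forms of degree $-g$ in the variables cutting out $L$. The conclusion is that $I_C$ is minimally generated in degrees $2,2,2$ and $1-g$, with nothing in between.

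With the generating degrees in hand the inequality does the rest. The hypothesis $g<-d$ means $1-g\ge d+2$, so the single extra generator has degree strictly larger than $d+1$, while $x^2,xy,y^2$ have degree $2\le d+1$. Consequently every element of $I_C$ of degree at most $d+1$ is an $\op3$--linear combination of $x^2,xy,y^2$ alone, i.e. $(I_C)_{\le d+1}=(x^2,xy,y^2)=(I_{C_{red}})^2$. Lemma \ref{lem:incsat} then yields that $V\!\left((I_{C_{red}})^2\right)\subset Z$, which is precisely the asserted singularity along the second infinitesimal neighborhood.

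The delicate point, and the step I would write out most carefully, is the identification of the degree of the extra generator with $1-g$: everything hinges on correctly relating the arithmetic genus of the ribbon to the twist of $\mathcal{L}$, and on the fact that $I_C$ needs no generators in intermediate degrees between $2$ and $1-g$ (equivalently, that $I_C/(I_L)^2$ is the full graded module of a line bundle on $\p1$, generated in a single degree). Once this is secured the genus hypothesis converts directly into the generator--degree bound and the argument closes; the explicit normal form $I_C=(x^2,xy,y^2,\,xp-yq)$ provides a convenient independent check of the Euler characteristic computation.
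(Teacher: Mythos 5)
Your skeleton is the same as the paper's: everything reduces, via Lemma \ref{lem:incsat}, to knowing that $I_C$ has no elements of degree $\le d+1$ outside $(x,y)^2$, where $C_{red}=\{x=y=0\}$. The difference is how that fact is obtained. The paper simply quotes Nollet's classification \cite{N}, which gives the normal form $I_C=(x^2,xy,y^2,xp+yq)$ with $p,q$ of degree $-g$, so that the one extra generator has degree $1-g\ge d+2$. You instead try to rederive this normal form from the Ferrand/ribbon description, and your bookkeeping is correct as far as it goes: $\deg\mathcal{L}=-1-g$ follows from the Euler characteristic of $0\to\mathcal{L}\to\mathcal{O}_C\to\mathcal{O}_L\to0$, and the kernel of $\mathcal{O}_L(-1)^{\oplus2}\onto\mathcal{L}$ is the line bundle $\mathcal{O}_L(g-1)$. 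But the decisive step --- passing from this sheaf-level computation to the graded-module statement that $I_C/(I_L)^2$ is free of rank one generated in degree $1-g$ --- is asserted, flagged by you as ``the delicate point'', and never proved. That is a genuine gap: a graded module whose sheafification is $\op1(g-1)$ need not agree with $\bigoplus_m H^0(\op1(g-1+m))$ in low degrees, and low degrees are exactly what the corollary is about. The ``independent check'' you offer, namely the normal form $(x^2,xy,y^2,xp-yq)$, cannot serve as the justification either, since that normal form is precisely Nollet's theorem, i.e., the statement you set out to derive.

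Fortunately the gap closes with one observation, and you need less than freeness. Since $x,y$ is a regular sequence, $I_L/I_L^2\cong\mathbb{C}[z,w](-1)^{\oplus2}$ is a free $\mathbb{C}[z,w]$-module; hence $M:=I_C/I_L^2\subseteq I_L/I_L^2$ is torsion free. Therefore the natural map $M\to\bigoplus_m H^0\bigl(\widetilde{M}(m)\bigr)$ is injective, because its kernel consists of the elements annihilated by a power of $(z,w)$, of which a torsion-free module has none. By your degree count $\widetilde{M}\cong\op1(g-1)$, whose section module vanishes in degrees $<1-g$; so every element of $I_C$ not lying in $(x,y)^2$ has degree at least $1-g\ge d+2$. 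This gives the inclusion $(I_C)_{\le d+1}\subseteq(x,y)^2$, which is all that is needed: the scheme $D$ cut out by $(I_C)_{\le d+1}$ then contains the scheme cut out by $(x,y)^2$, and Lemma \ref{lem:incsat} gives $D\subset Z$. (Your stronger claim of equality actually fails in the edge case $d=0$, where $(I_C)_{\le 1}=0$; and genuine freeness of $M$ would require in addition that $(x,y)^2$ is saturated and $H^1_\ast(\mathscr{I}_L^2)=0$, i.e., that the second infinitesimal neighborhood is ACM.) One further slip to correct: $p$ and $q$ are forms in the coordinates $z,w$ of the line $L$ itself, not ``in the variables cutting out $L$'', which are $x,y$.
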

	
	\begin{proof}
		Up to a linear change of coordinates, $C_{red} = \{x=y=0\}$ and the ideal of $C$ is $I_C = (x^2, xy,y^2, xp+yq)$ where $p$ and $q$ have degree $-g$, see \cite{N}. 
		
		If $g <-d$, then $\deg (xp+yq) \geq d+2$ and Lemma \ref{lem:incsat} implies that $I_Z \subset (x,y)^2$.
	\end{proof}
	
	The second infinitesimal neighborhood of a line has degree three. This will be useful to bound $c_3(T_\sD)$ in some cases. Next we describe the restriction on triple lines. 
	
	Fix $C_{red} = \{x=y=0\}$. According to \cite{N}, a triple structure on $C_{red}$ is described by a pair of numbers $(a,b)$ and falls in one of two cases: 
	\begin{enumerate}
		\item $a=-1$ then $C$ is a curve of genus $p_a(C) = 1-b$ given by
		\[
		I_C = (x^2, xy, y^3, xq-y^2p),
		\]
		where $\deg p + 1= \deg q = b$.
		\item If $a\geq 0$ then $C$ is a curve of genus $p_a(C) = -2-3a-b$ given by
		\[
		I_C = (x,y)^3+ (  x(xg-yf),y(xg-yf),p(xg-yf) - rx^2-sxy-ty^2),
		\]
		where $\deg f = \deg g = a+1$ and $\deg p = b$. 
	\end{enumerate}

	\begin{corollary}\label{cor:3line}
		Let $\sD$ be a degree $d$ distribution on $\mathbb{P}^3$ and let $C$ be a triple line of type $(a,b)$. If either $a=-1$ and $b \geq d$; or $a\geq 0$ and $a+b \geq d$ then $\sing(\sD)$ contains a multiple structure on $C_{red}$ of degree at least $4$.
	\end{corollary}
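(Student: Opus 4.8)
The plan is to follow the template of Corollary \ref{cor:2line}: put $C$ in Nollet's normal form, pass from $C$ to the larger scheme $D:=V\big((I_C)_{\le d+1}\big)$ by means of Lemma \ref{lem:incsat} (so that $C\subseteq D\subseteq\sing(\sD)$), and prove that the numerical hypotheses force $D$ to be a multiple structure on $C_{red}$ of degree at least $4$, hence strictly larger than the degree-$3$ triple line $C$.

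First I would fix coordinates so that $C_{red}=\{x=y=0\}$ and $I_C$ has the presentation recalled immediately before the statement, and read off the degrees of the minimal generators; this is the decisive bookkeeping step. In the type $(-1,b)$ case the generators $x^2,xy,y^3$ have degrees $2,2,3$, while $xq-y^2p$ has degree $b+1$. In the type $(a,b)$ case with $a\ge0$ the four generators of $(x,y)^3$ have degree $3$, the pair $x(xg-yf),\,y(xg-yf)$ have degree $a+3$, and $p(xg-yf)-rx^2-sxy-ty^2$ has degree $a+b+2$. Because $(I_C)_{\le d+1}$ is generated precisely by those minimal generators of degree $\le d+1$, the point is exactly whether the top generator --- of degree $b+1$, respectively $a+b+2$ --- is omitted. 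In the type $(a,b)$ case the hypothesis $a+b\ge d$ gives $a+b+2\ge d+2>d+1$, so omission is automatic; in the type $(-1,b)$ case omission needs $b+1>d+1$, and the boundary $b=d$ (where $D=C$ and nothing is gained) is the one delicate point to watch.

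The heart of the proof is the resulting degree computation, for which I assume $d\ge2$ so that the cubic generators survive. For $a=-1$ one is left with $(I_C)_{\le d+1}=(x^2,xy,y^3)$; this ideal is saturated (it is $(x,y)$-primary, with no embedded component), and since $\dim_{\mathbb C}\mathbb{C}[x,y]/(x^2,xy,y^3)=4$ it defines a multiplicity-$4$ structure on $C_{red}$. For $a\ge0$ one is left with $J:=(x,y)^3+\big(x(xg-yf),\,y(xg-yf)\big)$, or merely $(x,y)^3$ (of degree $6$) once $a\ge d-1$ drives the middle generators out as well. To compute $\deg V(J)$ I would grade $\mathbb{C}[x,y,z,w]/J$ by order in $(x,y)$: orders $0$ and $1$ contribute free $\mathcal O_{C_{red}}$-ranks $1$ and $2$, whereas the order-$2$ part is $\mathcal O_{C_{red}}^{\oplus3}$ (basis $x^2,xy,y^2$) modulo the submodule generated by $x\ell$ and $y\ell$, where $\ell:=xg_0-yf_0$ and $f_0,g_0$ denote the images of $f,g$ in $\mathbb C[z,w]$; that submodule has rank $2$, so the order-$2$ part contributes rank $1$, giving total multiplicity $1+2+1=4$. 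In every case $D$ is one-dimensional, supported on $C_{red}$, of degree $\ge4>3=\deg C$, whence $C\subsetneq D\subseteq\sing(\sD)$, as claimed.

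I expect the order-$2$ multiplicity computation in the case $a\ge0$ to be the main obstacle: one must verify that deleting a single generator genuinely enlarges the scheme, and that the submodule generated by $x\ell,y\ell$ has rank $2$ irrespective of whether $f_0$ and $g_0$ share a common factor, so that the leading Hilbert coefficient equals $4$ for every admissible choice of the data. A secondary check is that the truncated ideal never acquires a two-dimensional component; this is immediate, since every retained generator lies in $(x,y)$ and hence $\operatorname{rad}\big((I_C)_{\le d+1}\big)=(x,y)$ throughout.
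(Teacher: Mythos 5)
Your strategy coincides with the paper's: truncate $I_C$ in degrees $\le d+1$, invoke Lemma \ref{lem:incsat}, and compute the degree of the resulting multiple structure. Your explicit rank computations over $\mathbb{C}[z,w]$ supply details that the paper's two-line proof only asserts, and in the case $a\ge 0$, as well as in the case $a=-1$ with $b\ge d+1$, your argument is complete and correct.

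The gap is precisely the point you flagged and then walked past. When $a=-1$ and $b=d$, the generator $xq-y^2p$ has degree $b+1=d+1$, so it is \emph{not} dropped from $(I_C)_{\le d+1}$: the truncated ideal is all of $I_C$, hence $D=C$ and Lemma \ref{lem:incsat} yields nothing. Your assertion ``for $a=-1$ one is left with $(I_C)_{\le d+1}=(x^2,xy,y^3)$'' is valid only when $b\ge d+1$, so what you have actually proved is the corollary with $b>d$ in place of $b\ge d$. Moreover, this gap cannot be closed, because the statement is false on the boundary $b=d$. For $d=1$, the $1$-form $\omega=(xz-y^2)\,dx+xy\,dy-x^2\,dz$ satisfies the Euler relation and has coprime coefficients, and its singular scheme is exactly the triple line $V(x^2,xy,y^3,xz-y^2)$ of type $(-1,1)$ (note $y^3=z\cdot xy-y\cdot(xz-y^2)$, so the coefficient ideal is already saturated); this is a curve of degree $3$, so $\sing(\sD)$ contains no multiple structure of degree $\ge 4$. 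Similarly, for $d=2$ put $g=xz^2-y^2w$ and $\omega=xyz\,dx-x^2z\,dy+wg\,dz-zg\,dw$: all four coefficients lie in the ideal $(x^2,xy,y^3,g)$ of the type $(-1,2)$ triple line, the Euler relation holds, the coefficients are coprime, and localizing $(xyz,x^2z,wg,zg)$ at the generic point of $\{x=y=0\}$ gives $(x^2,xy,g)$, which has colength $3$ (modulo $g$ one has $x\equiv (w/z^2)y^2$, so the ideal reduces to $(g,y^3)$); hence again the singular scheme has multiplicity exactly $3$ along $C_{red}$. In fairness, the paper's own proof commits the same error --- its truncation argument also requires $b>d$ --- so the corollary itself should be corrected to a strict inequality in the $a=-1$ case; but having singled out $b=d$ as ``the one delicate point to watch,'' your proof needed either to resolve that case or to discover that it fails.
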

	
	\begin{proof}
		If $a=-1$ and $b\geq d$ then $\sing(\sD)$ must contain the curve given by the ideal $(x^2, xy, y^3)$, which has degree $4$.
		
		If $a\geq 0$ and $a+b \geq d$ then $\sing(\sD)$ must contain the curve given by the ideal $(x,y)^3 +(x(xg-yf),y(xg-yf))$, which has degree at least $4$. 
	\end{proof}

	Finally we recall the following result, \cite[Proposition 4.4]{CJMdeg1}, that exhibits a correspondence between the $1$-forms a closed subscheme and the linear syzygies of its homogeneous ideal.
	
	\begin{proposition}[\cite{CJMdeg1}]\label{prop:syzygies}
		Let $Z \subset \mathbb{P}^n$ be a closed subscheme and let $d\geq 0$ be an integer. Suppose that $Z$ is not contained in a hypersurface of degree less than or equal to $d$. Then there exists a linear isomorphism between the spaces of degree $d+2$ twisted $1$-forms singular at $Z$ and linear first syzygies of the homogeneous ideal $I_Z$. 
	\end{proposition}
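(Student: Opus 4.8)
The plan is to reduce the proposition to a purely algebraic identification: via the Euler sequence I will present both sides as concrete spaces of tuples of polynomials, and then recognise them as the kernel of a single multiplication map. Throughout write $S = \mathbb{C}[x_0,\dots,x_n]$ and set $W := (I_Z)_{d+1} = H^0(\mathscr{I}_Z(d+1))$.

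First I would recall the standard description of twisted $1$-forms. Taking global sections in the Euler sequence twisted by $d+2$ identifies $H^0(\Omega^1_{\pn}(d+2))$ with the tuples $(A_0,\dots,A_n)$ of homogeneous polynomials of degree $d+1$ satisfying the descent relation $\sum_i x_i A_i = 0$; the associated form is $\omega = \sum_i A_i\,dx_i$, and its singular scheme is cut out by the ideal $(A_0,\dots,A_n)$. Hence $\omega$ is singular along $Z$ precisely when each $A_i$ lies in $W$. Thus the space of degree $d+2$ forms singular at $Z$ is
\[
\Sigma_Z := \{\, (A_0,\dots,A_n) \in W^{\oplus(n+1)} : \textstyle\sum_i x_i A_i = 0 \,\}.
\]
Next I would introduce the multiplication map $\mu\colon S_1 \otimes_{\mathbb{C}} W \to S_{d+2}$, $\ell \otimes g \mapsto \ell g$. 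Choosing the basis $x_0,\dots,x_n$ of $S_1$ gives a canonical identification $S_1\otimes W \cong W^{\oplus(n+1)}$, under which $\sum_i x_i\otimes A_i$ corresponds to $(A_0,\dots,A_n)$ and $\mu$ becomes $(A_i)\mapsto \sum_i x_i A_i$. Therefore $\Sigma_Z = \ker\mu$, an identification that is $\mathbb{C}$-linear and essentially tautological.

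The substance of the statement is the identification of $\ker\mu$ with the linear first syzygies of $I_Z$, and this is where the hypothesis enters. Since $Z$ lies in no hypersurface of degree $\le d$, we have $(I_Z)_{\le d}=0$, so $I_Z$ is generated in degrees $\ge d+1$ and a $\mathbb{C}$-basis $g_1,\dots,g_m$ of $W$ is part of a minimal generating set (the generators of least degree). An element of $\ker\mu$ is then a relation $\sum_j p_j g_j = 0$ with the $p_j$ linear forms, i.e.\ exactly a linear first syzygy. Conversely, in a minimal graded free resolution the degree $d+2$ part of the first syzygy module can only involve the degree $d+1$ generators with linear coefficients: minimality forbids constant entries, so no generator of degree $\ge d+2$ can contribute to a syzygy of degree $d+2$, and there are no generators of degree $\le d$ that could contribute with quadratic-or-higher coefficients. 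Hence every linear first syzygy arises from an element of $\ker\mu$, and the two spaces coincide.

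I expect the main obstacle to be precisely this last step: pinning down the meaning of \emph{linear first syzygy} and verifying that the hypersurface hypothesis is exactly what excludes contributions from lower-degree generators (which would otherwise yield forms singular along $Z$ with no matching linear syzygy) while minimality excludes higher-degree ones. Once this is settled, combining it with $\Sigma_Z = \ker\mu$ gives the asserted linear isomorphism, under which a section $\omega=\sum_i A_i\,dx_i$ corresponds to the syzygy $\sum_j p_j g_j=0$ obtained by expanding $A_i = \sum_j c_{ij} g_j$ and setting $p_j = \sum_i c_{ij} x_i$.
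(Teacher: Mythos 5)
Your proposal is correct, and it follows essentially the same route as the paper and its cited source \cite{CJMdeg1}: identify degree $d+2$ forms singular at $Z$ with the kernel of the multiplication map $S_1\otimes (I_Z)_{d+1}\to S_{d+2}$ via the Euler sequence, and use the hypothesis $(I_Z)_{\le d}=0$ to see that a basis of $(I_Z)_{d+1}$ consists of minimal generators, so that this kernel is exactly the space of linear first syzygies. Indeed, your explicit correspondence $\omega=\sum_i A_i\,dx_i \leftrightarrow (p_j)$ with $A_i=\sum_j c_{ij}g_j$ and $p_j=\sum_i c_{ij}x_i$ is precisely the inverse of the construction $\omega=\sum_j g_j\,dp_j$ that the paper records immediately after the proposition.
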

	
	To produce the $1$-forms we fix a minimal generating set $\{F_0, \dots , F_r\}$ for $I_Z$ and consider a linear first syzygy $\{G_0 , \dots, G_r\}$. Then we define 
	\[
	\omega = F_0 dG_0 + \dots + F_rdG_r;
	\]
	it is clear that $\omega$ is homogeneous and descends to $\mathbb{P}^n$ since $F_0G_0 + \dots + F_rG_r =0$. 
	Note that, for general ideals, one may also use higher degree syzygies and replace $dG_j$ by $\frac{1}{\deg G_j} dG_j$ in the definition of $\omega$. But then $\omega$ may be non-homogeneous and the syzygies must be chosen wisely. We used Macaulay2 \cite{M2} to do such computations.

	\subsection{Morphisms to the tangent bundle}
	Let $F$ be a rank 2 reflexive sheaf on $\p3$, with $c_1(F)=0$. Assuming that $\Hom(F,\tp3)\ne0$, we would like to find conditions that guarantee the existence of a monomorphism $F\into\tp3$ with torsion free cokernel. One criterion is given by \cite[Corollary A.4]{CCJ1}, which we now recall.
	
	\begin{lemma}[\cite{CCJ1}]\label{lem:bertini}
		Let $F$ be a globally generated rank $2$ reflexive sheaf on $\p3$. Then $F^\vee(1)$ is the tangent sheaf of a codimension one distribution $\sD$ of degree $c_1(F)$ with $c_2(T_\sD) = c_2(F)- c_1(F) + 1$, and $c_3(T_\sD) = c_3(F)$.
	\end{lemma}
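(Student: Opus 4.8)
The plan is to produce a general morphism $\phi\colon F^\vee(1)\to\tp3$ and to show that it realizes $F^\vee(1)$ as a saturated rank $2$ subsheaf of $\tp3$, so that its cokernel is a twisted ideal sheaf and the resulting sequence is a distribution with $T_\sD=F^\vee(1)$. To identify the space of candidate morphisms, note that $F$ reflexive gives $\big(F^\vee(1)\big)^\vee\cong F(-1)$, and since $\tp3$ is locally free,
\[
\inhom\!\big(F^\vee(1),\tp3\big)\;\cong\;F(-1)\otimes\tp3\;\cong\;F\otimes\tp3(-1).
\]
The Euler sequence exhibits $\tp3(-1)$ as a quotient of $\op3^{\oplus4}$, so it is globally generated; as $F$ is globally generated by hypothesis, so is $F\otimes\tp3(-1)$. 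In particular $\Hom\!\big(F^\vee(1),\tp3\big)=H^0\!\big(F\otimes\tp3(-1)\big)\neq0$, and the sheaf $\inhom\!\big(F^\vee(1),\tp3\big)$ is globally generated, which is precisely the input needed to run a Bertini/Kleiman-type transversality argument.

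Taking $\phi$ general, I would control its degeneracy loci: with $\rk F^\vee(1)=2$ and $\rk\tp3=3$ on the threefold $\p3$, the locus where $\phi$ has rank $\le1$ has expected codimension $(2-1)(3-1)=2$, while the locus where $\phi=0$ has expected codimension $6$ and is empty. Hence a general $\phi$ has full rank $2$ off a codimension $2$ locus; since $F^\vee(1)$ is torsion free, $\phi$ is injective, and its cokernel $Q$ is a rank $1$ sheaf that is locally free away from codimension $2$. The crux is to upgrade this to $Q$ being torsion free. Generality forces the degeneracy locus to have codimension exactly $2$, ruling out divisorial (codimension $1$) torsion, while torsion in codimension $\ge2$ is excluded by reflexivity: if $E'$ denotes the saturation of $\im\phi$, then $E'$ is reflexive with $E'/F^\vee(1)$ supported in codimension $\ge2$, and dualizing $0\to F^\vee(1)\to E'\to E'/F^\vee(1)\to0$ together with $\inext^1\!\big(E'/F^\vee(1),\op3\big)=0$ forces $F^\vee(1)\cong E'$. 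Thus $Q$ is torsion free, so $Q\cong\mathscr{I}_Z(m)$ with $Z$ of codimension $\ge2$ and $m=c_1(Q)$, yielding the distribution
\[
\sD~:~0\longrightarrow F^\vee(1)\longrightarrow\tp3\stackrel{\phi}{\longrightarrow}\mathscr{I}_Z(m)\longrightarrow0 .
\]
Comparing first Chern classes gives $m=c_1(F)+2$, so $\sD$ has degree $d=c_1(F)$ and $T_\sD=F^\vee(1)$. I expect this torsion-freeness step—the transversality bound on the degeneracy locus together with the reflexivity argument—to be the main obstacle; the rest is formal.

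It remains to read off the Chern classes. As $F$ is reflexive of rank $2$ we have $F^\vee\cong F(-c_1(F))$, hence $T_\sD=F^\vee(1)\cong F(1-c_1(F))$. The twisting formulas for rank $2$ sheaves (under which $c_3$ is invariant) then give $c_2(T_\sD)=c_2(F)-c_1(F)+1$ and $c_3(T_\sD)=c_3(F)$, as claimed.
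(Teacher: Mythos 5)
Your proof is correct, but note that the paper itself offers no proof of this lemma: it is quoted from \cite[Corollary A.4]{CCJ1}, where it is established by precisely the Bertini-type argument you outline (global generation of $F(-1)\otimes\tp3$, transversality for the degeneracy loci of a general morphism $\phi\colon F^\vee(1)\to\tp3$, and torsion-freeness of the cokernel via the reflexive-saturation argument). The one detail you gloss over --- $F^\vee(1)$ is only reflexive, so the fiberwise rank counts behind the transversality statement must be run on the open set where it is locally free --- is harmless, since the complement of that set is a finite set of points and therefore cannot produce divisorial torsion or affect the codimension-two conclusion.
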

	
	In particular, if $F(1)$ is globally generated, then there exists $\phi\in\Hom(F,\tp3)$ such that $\coker\phi$ is torsion free; if, in addition, $c_1(F)=0$, then the corresponding codimension one distribution $\sD$ has degree equal to $c_1(F(1))=2$, and satisfies $(c_2(T_\sD),c_3(T_\sD))=(c_2(F),c_3(F))$.
	
	More generally, assume that $\phi\colon F \to \tp3$ is not injective; it then follows that $\im\phi$ must be a torsion free sheaf, which implies that $\ker \phi$ is reflexive of rank 1, therefore $\ker\phi\simeq\op3(-k)$ and $\im\phi\simeq \mathscr{I}_C(k)$ for some curve $C$. The stability of $F$ forces $k\ge1$ and the natural inclusion $\im\phi\into\tp3$ induces a nontrivial section in $H^0(\tp3(-k))$, thus $k = 1$. Then $\phi$ decomposes as
	\[
	F \stackrel{s^\vee}{\longrightarrow} \op3(1) \stackrel{\nu}{\longrightarrow} \tp3
	\]
	where $s\in H^0(F(1))$ and $\nu \in H^0(\tp3(-1))$. This factorization allows us to prove the following lemma.
	
	\begin{lemma}\label{lem:injective maps}
		Let $F$ be a $\mu$-stable rank 2 reflexive sheaf on $\p3$ such that $c_1(F)=0$ and $\Hom(F,\tp3) \neq \{0\}$. Then there exists an injective morphism $\phi\colon F \hookrightarrow \tp3$ if and only if either
		\begin{enumerate}
			\item $h^0(F(1)) \neq 1$; 
			\item $h^0(F(1)) = 1$ and $\hom(F,\tp3) > 4$.
		\end{enumerate}
		Moreover, if $h^0(F(1)) = 0$ then every morphism in $\Hom(F,\tp3)$ is injective;
	\end{lemma}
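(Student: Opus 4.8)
The plan is to analyze the structure of a nonzero morphism $\phi \colon F \to \tp3$ directly, using the factorization discussed immediately before the lemma statement. Recall that if $\phi$ fails to be injective, then because $\im\phi$ is torsion free and $F$ is $\mu$-stable with $c_1(F)=0$, the kernel must be $\op3(-1)$ and $\phi$ factors as $F \xrightarrow{s^\vee} \op3(1) \xrightarrow{\nu} \tp3$ for some $s \in H^0(F(1))$ and $\nu \in H^0(\tp3(-1))$. The whole argument therefore revolves around understanding which morphisms are \emph{non-}injective, i.e. which arise via such a factorization.

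**First I would** set up the linear-algebraic bookkeeping. The space of morphisms that factor through some $\op3(1)$ is the image of the composition map
\[
\mu\colon H^0(F(1))^\vee \otimes H^0(\tp3(-1)) \longrightarrow \Hom(F,\tp3),
\qquad (s^\vee, \nu) \longmapsto \nu \circ s^\vee.
\]
Since $\dim H^0(\tp3(-1)) = 4$ (the linear vector fields modulo the Euler field, equivalently the traceless $4\times 4$ matrices give $h^0(\tp3(-1))=4$, cf. the discussion of degree-one foliations by curves), each fixed $s^\vee$ contributes a $4$-dimensional space of non-injective morphisms, but I must check whether distinct $s$ can produce the same map and whether every such composite is genuinely non-injective. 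The $\mu$-stability of $F$ is exactly what guarantees that a nonzero $s^\vee\colon F \to \op3(1)$ is surjective onto an ideal sheaf $\mathscr{I}_C(1)$ with $C$ of codimension two (it cannot be an isomorphism since $c_1(F)=0 \neq 1$), so the composite $\nu \circ s^\vee$ is noninjective precisely when it is not the zero map, and one checks that for generic $\nu$ it is nonzero. This lets me compute the dimension of the locus of non-injective morphisms.

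**The three cases then fall out** as follows. When $h^0(F(1)) = 0$, no factorization through $\op3(1)$ exists at all, so every nonzero morphism in $\Hom(F,\tp3)$ is automatically injective, giving the final assertion of the lemma. When $h^0(F(1)) \geq 2$, the non-injective locus is contained in the image of $\mu$, whose source has dimension $4\,h^0(F(1))$, but the scalar ambiguity $s^\vee \mapsto \lambda s^\vee$, $\nu \mapsto \lambda^{-1}\nu$ must be quotiented out; the key point is that this image forms a proper closed subvariety of $\Hom(F,\tp3)$ provided $\Hom(F,\tp3)$ has dimension strictly greater than the non-injective locus, and I would verify that this holds whenever $h^0(F(1))\neq 1$, so an injective $\phi$ exists. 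The remaining borderline is $h^0(F(1)) = 1$: here every $s^\vee$ is a scalar multiple of a single fixed surjection, so the non-injective morphisms form exactly a $4$-dimensional subspace of $\Hom(F,\tp3)$, and an injective morphism exists if and only if this subspace is proper, i.e. $\hom(F,\tp3) > 4$.

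**The main obstacle** I anticipate is the $h^0(F(1))=1$ case, and specifically proving that the image of $\mu$ is genuinely $4$-dimensional (not smaller) and consists \emph{entirely} of non-injective maps with no injective map sneaking in. I would need to confirm that the single section $s$ gives a surjection $F \onto \mathscr{I}_C(1)$ with $C$ of the right codimension (so that no $\nu\circ s^\vee$ can be injective for dimension reasons, since the image always factors through $\op3(1)$ and hence has a rank-one saturation issue), and simultaneously that the map $\nu \mapsto \nu \circ s^\vee$ is injective on $H^0(\tp3(-1))$, which amounts to showing that no nonzero linear vector field kills the image $\mathscr{I}_C(1)$ inside $\tp3$. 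Establishing this injectivity cleanly — likely by using that $\im s^\vee$ contains enough of $\op3(1)$ generically, together with the fact that the only section of $\tp3(-1)$ vanishing on a codimension-two ideal must be zero — is the delicate step; everything else is dimension counting built on the factorization and on $\mu$-stability.
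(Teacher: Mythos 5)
Your strategy is the same as the paper's: non-injective morphisms are exactly those factoring through $\op3(1)$, and one compares the dimension of that locus with $\hom(F,\tp3)$. Your cases $h^0(F(1))=0$ and $h^0(F(1))=1$ are handled correctly, including the identification of the key point for the latter: for a fixed $s$, the map $\nu\mapsto\nu\circ s^\vee$ is injective, because a morphism $\op3(1)\to\tp3$ vanishing on $\mathscr{I}_C(1)$ vanishes generically and hence is zero, $\tp3$ being torsion free.

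The case $h^0(F(1))\geq 2$, however, contains a genuine gap, in two respects. First, your opening identification, ``the space of morphisms that factor through some $\op3(1)$ is the image of the composition map $\mu$,'' is wrong: the image of the linear map $\mu$ is the \emph{span} of the composites, whereas the non-injective morphisms are only the images of the decomposable tensors $s\otimes\nu$, i.e.\ a cone over a Segre variety, of dimension $h^0(F(1))+3$, which is neither linear nor in general equal to $\im\mu$. The distinction matters: for the sheaves of Proposition \ref{prop:inj gen c2=3} with $(c_2,c_3)=(3,6)$ one has $h^0(F(1))=2$, $h^1(F)=0$, hence $\hom(F,\tp3)=4h^0(F(1))=8$ and $\mu$ is an isomorphism, so $\im\mu$ is \emph{all} of $\Hom(F,\tp3)$ while injective morphisms nevertheless exist; under your identification the lemma would be false for these sheaves. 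Second, even after replacing $\im\mu$ by the decomposable locus, your conclusion requires the inequality $\hom(F,\tp3)>h^0(F(1))+3$, for which you offer only the promise ``I would verify that this holds whenever $h^0(F(1))\neq 1$'' with no mechanism. The mechanism is the injectivity of $\mu$ on the \emph{full} tensor product: twisting the Euler sequence by $F\simeq F^\vee$ and using $h^0(F)=0$ (which follows from $\mu$-stability) gives $\hom(F,\tp3)\geq 4h^0(F(1))$, and then $4a>a+3$ for $a\geq 2$ closes the argument. Your injectivity discussion treats only $\nu\mapsto\nu\circ s^\vee$ for a single fixed $s$, which yields $\hom(F,\tp3)\geq 4$; that suffices when $h^0(F(1))=1$ but not here. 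With these two corrections your plan becomes precisely the paper's proof.
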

	
	\begin{proof}
		Consider the map $\xi\colon H^0(F(1)) \otimes H^0(\tp3(-1)) \rightarrow \Hom(F,\tp3)$ defined by $\xi(s \otimes \nu) = s^\vee \nu$; clearly $\xi$ is injective. Let $\Sigma \subset H^0(F(1)) \otimes H^0(\tp3(-1)) $ be the locus of the decomposable elements, i.e., those of the form $s\otimes \nu$. From our previous discussion, the locus of maps in $\Hom(F,\tp3)$ that are not injective is precisely $\xi(\Sigma)$. 
		
		If $h^0(F(1)) = 0$ then $\Sigma = \emptyset$ and we are done. If $h^0(F(1)) = 1$ we have the other extremal case $\Sigma = H^0(F(1)) \otimes H^0(\tp3(-1))$, then there exists an injective morphism if and only if $\hom(F,\tp3) > \dim \Sigma = 4$. Finally, if $h^0(F(1)) \geq 2$ then $\Sigma$ is the affine cone of a Segre variety and
		\[
		\hom(F,\tp3) \geq 4h^0(F(1)) > h^0(F(1)) + 3 = \dim \Sigma.
		\]
		Thus there exist injective morphisms.
	\end{proof}
	
	Now we analyse the morphisms that are injective but have torsion in their cokernels. Let $\phi\colon F\into \tp3$ be a monomorphism whose cokernel is not torsion free, and let
	\begin{equation}\label{sheaf p}
		P:= \ker\{ \coker\phi\longrightarrow(\coker\phi)^{\vee\vee} \} \end{equation}
	be the maximal torsion subsheaf of $\coker\phi$. The quotient $(\coker\phi)/P$ is a torsion free sheaf of rank 1, so it must be of the form $\mathscr{I}_Z(d'+2)$ for 1-dimensional scheme $Z$ and some $d'\ge0$. We end up with the following commutative diagram
	\begin{equation}\label{saturation}
		\begin{aligned}
			\xymatrix{
				&       &      & 0 \ar[d]  & \\
				& 0\ar[d]     &      & P \ar[d]  & \\
				0\ar[r] & F \ar[d] \ar[r]^-{\phi} & \tp3 \ar@{=}[d] \ar[r] & K \ar[r] \ar[d] & 0 \\
				0\ar[r] & F'\ar[r]^-{\phi'} \ar[d]  & \tp3 \ar[r]   & \mathscr{I}_Z(d'+2) \ar[r] \ar[d] & 0 \\
				& P \ar[d]      &      & 0    & \\
				&0
			}  
		\end{aligned}
	\end{equation}
	where $K:=\coker\phi$ and $F':=\ker\{\tp3\onto K\onto K/P\}$. 
	
	The second row of the previous diagram defines a codimension one distribution $\sD$ of degree $d'$, called the \emph{saturation} of the monomorphism $\phi\colon F\to \tp3$; note that $T_\sD=F'$.
	
	\begin{lemma}\label{lem:hom saturation}
		Let $F$ be a rank 2 reflexive sheaf on $\p3$, and let $\phi \colon F\to \tp3$ be a monomorphism whose cokernel is not torsion free. If $\sD$ is the saturation of $\phi$, then:
		\begin{enumerate}
			\item The sheaf $P$ defined in display \eqref{sheaf p} has pure dimension 2.
			\item $\deg(\sD)<2-c_1(F)$;
			\item $\hom(T_\sD,\tp3)\le\hom(F,\tp3)$.
		\end{enumerate}
	\end{lemma}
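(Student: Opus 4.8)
The three assertions are of quite different depths, and I expect part (1) to carry all the weight: once it is in hand, (2) and (3) follow by formal bookkeeping. Throughout I would work with the left-hand column of the diagram in display \eqref{saturation}, namely the short exact sequence
\[
0 \longrightarrow F \longrightarrow T_\sD \longrightarrow P \longrightarrow 0,
\]
in which $F$ and $T_\sD=F'$ are reflexive of rank $2$ (the latter because $\tp3/F'\simeq\mathscr{I}_Z(d'+2)$ is torsion free, so $F'$ is saturated in the locally free $\tp3$), while $P$ is torsion, hence of dimension at most $2$. Note $P\ne0$ precisely because $\coker\phi$ is assumed not torsion free.

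For part (1) the plan is to rule out any lower-dimensional part of $P$ by a saturation argument. Let $T\subseteq P$ be the largest subsheaf supported in dimension $\le 1$, and let $G\subseteq T_\sD$ be its preimage under $T_\sD\onto P$, so that $F\subseteq G\subseteq T_\sD$ and $G/F\simeq T$. As a subsheaf of the torsion-free sheaf $T_\sD$, $G$ is torsion free, so the natural map $G\into G^{\vee\vee}$ is injective; and since $T$ is supported in codimension $\ge 2$, the inclusion $F\into G$ is an isomorphism away from a closed set of codimension $\ge 2$. Because the double dual is insensitive to modifications in codimension $\ge 2$, I would conclude $G^{\vee\vee}\simeq F^{\vee\vee}=F$, and combining this with $G\into G^{\vee\vee}$ forces $G=F$, hence $T=0$. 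Thus $P$ has no subsheaf of dimension $\le 1$, so (as $P\ne0$) it is pure of dimension $2$. This is the step I expect to be the main obstacle: everything hinges on the insensitivity of $(-)^{\vee\vee}$ to codimension $\ge 2$ data, together with checking that $G$ is genuinely torsion free so that $G\into G^{\vee\vee}$ is available.

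Granting purity, part (2) becomes a Chern-class computation. From the displayed column, $c_1(T_\sD)=c_1(F)+c_1(P)$, and since $P$ is a nonzero sheaf of pure codimension $1$ its first Chern class is the class of a nonzero effective divisor, whence $c_1(P)\ge 1$. On the other hand the second row of \eqref{saturation} gives $c_1(T_\sD)=4-(d'+2)=2-d'$ where $d'=\deg(\sD)$. Combining these, $2-d'=c_1(F)+c_1(P)\ge c_1(F)+1$, that is $\deg(\sD)=d'\le 1-c_1(F)<2-c_1(F)$, as required.

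Finally, part (3) follows by applying $\Hom(-,\tp3)$ to the same column. Since $P$ is torsion and $\tp3$ is locally free, hence torsion free, we have $\Hom(P,\tp3)=0$; left-exactness of $\Hom(-,\tp3)$ then yields an injection $\Hom(T_\sD,\tp3)\into\Hom(F,\tp3)$, and taking dimensions gives $\hom(T_\sD,\tp3)\le\hom(F,\tp3)$.
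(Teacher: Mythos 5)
Your proposal is correct, and on parts (2) and (3) it coincides with the paper's own argument: the paper likewise deduces (2) from $c_1(P)>0$ once purity is established, and (3) by applying $\Hom(-,\tp3)$ to the column $0\to F\to T_\sD\to P\to 0$ (with $\Hom(P,\tp3)=0$ since $P$ is torsion). Where you genuinely diverge is part (1), which is indeed where the weight lies. The paper proves purity homologically: it dualizes all three sides of the diagram \eqref{saturation}, using reflexivity of $T_\sD$ to get $\inext^3(\mathscr{I}_Z(d'+2),\op3)=0$ and reflexivity of $F$ to get $\inext^3(K,\op3)=0$ and $\dim\inext^2(K,\op3)=0$, transfers these to $P$ along the right-hand column, and then invokes the homological characterization of purity (a sheaf of dimension $\le 2$ on $\p3$ is pure of dimension $2$ precisely when $\inext^3(P,\op3)=0$ and $\inext^2(P,\op3)$ is at most $0$-dimensional). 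You instead argue via the torsion filtration: pulling the maximal subsheaf $T\subseteq P$ of dimension $\le 1$ back to $F\subseteq G\subseteq T_\sD$ and using that the reflexive sheaf $F$ cannot be enlarged inside a torsion-free sheaf by a modification in codimension $\ge 2$. The one step you should write out is the "insensitivity" claim: dualizing $0\to F\to G\to T\to 0$ and using $\inhom(T,\op3)=\inext^1(T,\op3)=0$ for $T$ of codimension $\ge 2$ (this is \cite[Proposition 1.1.6]{HL}, already quoted in the paper) gives $F^{\vee}\simeq G^{\vee}$, hence $G\into G^{\vee\vee}\simeq F^{\vee\vee}=F$, which together with $F\subseteq G$ forces $G=F$ and $T=0$. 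With that made explicit, your route is complete; it is more elementary (no purity criterion needed) and uses slightly weaker hypotheses, requiring only that $F$ be reflexive and $T_\sD$ torsion free, whereas the paper's computation is more mechanical and records along the way the explicit vanishings $\inext^3(P,\op3)=0$ and $\dim\inext^2(P,\op3)\le 0$. Both arguments ultimately rest on the reflexivity of $F$: yours through the double-dual argument, the paper's through the vanishing of its higher $\inext$ sheaves.
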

	\begin{proof}
		For the first item it suffices to prove that $\inext^3(P,\op3) = 0$ and $\inext^2(P,\op3)$ has dimension $0$. Consider the diagram in display \eqref{saturation}. Dualizing the bottom row and using that $F' = T_\sD$ is reflexive we get $\inext^3(\mathscr{I}_Z(d'+2),\op3) = 0$. Then, dualizing the rightmost column we get 
		\[
		\begin{split}
			\inext^2(K,\op3) \longrightarrow \inext^2(P,\op3) \longrightarrow 0 ; \\ 0 \longrightarrow \inext^3(K,\op3) \longrightarrow \inext^3(P,\op3)\longrightarrow 0. 
		\end{split}
		\]
		On the other hand, we dualize the top row to get $\dim \inext^2(K,\op3) =0$ and $\inext^3(K,\op3) = 0$; using this information in the sequences above we show that $P$ is pure of dimension two. Therefore $c_1(P)>0$ and
		\[
		\deg(\sD) = 2 - c_1(T_\sD) = 2 - (c_1(F)+c_1(P)) < 2 - c_1(F).
		\]
		
		Finally, we apply the functor $\Hom(\ \cdot \ ,\tp3)$ to the leftmost column and get
		\[
		0\longrightarrow \Hom(T_\sD,\tp3) \longrightarrow \Hom(F,\tp3),
		\]
		which proves the third item.
	\end{proof}
	
	In general, it is hard to describe the possible saturations for a given morphism and one may rely on ad hoc methods. In our case, it will be useful to know the the dimensions of $\Hom(T_\sD, \tp3)$ for a distribution of degree $0$ or $1$. These dimensions were obtained in \cite{CCJ1} and we summarize them in Table \ref{tab:hom table deg <2}.
	
	\begin{table}[ht] \centering
		\begin{tabular}{|c|c|c|}
			\hline
			$\deg(\sD)$  & $(c_2(T_\sD),c_3(T_\sD))$ & $\hom(T_\sD,\tp3)$ \\ \hline \hline
			\multirow{2}{*}{0} & (2,0)     & 1   \\ \cline{2-3} 
			& (1,0)     & 8   \\ \hline 
			\multirow{4}{*}{1} & (3,5)     & 1   \\ \cline{2-3} 
			& (2,2)     & 5   \\ \cline{2-3} 
			& (1,1)     & 12   \\ \cline{2-3} 
			& (0,0)     & 19   \\ \hline
		\end{tabular}
		\caption{Possible values of $\hom(T_\sD,\tp3)$ for all possible $\sD$ of degree $\leq 1$.}
		\label{tab:hom table deg <2}
	\end{table}

	%%%%%%%%%%%%%%%%%%%%%%%%%%%%%%%%%%%%%%%%%%%%%%
	%%%%%%%%%%%%%%%%%%%%%%%%%%%%%%%%%%%%%%%%%%%%%%

	\section{Distributions with \texorpdfstring{$c_2(T_\sD) \leq 1$}{c2 <=1}}\label{sect:c2<=1}
	
	We start the description with the cases $c_2(T_\sD) = -1, 0$ or $1$. For $c_2(T_\sD) = -1$ we have that $T_\sD = \op3(1) \oplus \op3(-1)$. In particular, the map $\phi \colon T_\sD \hookrightarrow \tp3$ yields vector fields $\nu_0:= \phi(1,0)$ and $\nu_2:= \phi(0,1)$, of respective degrees $0$ and $2$, that generate $\sD$. Choosing homogeneous coordinates $(x:y:z:w)$, we may assume that $\nu_0 = \frac{\partial}{\partial w}$ and $\nu_2 = A\frac{\partial}{\partial x} + B \frac{\partial}{\partial y } + C\frac{\partial}{\partial z}$, for some degree $2$ polynomials $A,B$ and $C$. Then $[\nu_0, \nu_2]= 0 $, i.e. $\sD$ is integrable, if and only if $A,B$ and $C$ do not depend on the variable $w$. These are the so called linear pullback foliations $S(3)$. Of course, one can easily produce a non-integrable distribution setting $\nu_0 = \frac{\partial}{\partial w}$ and choosing $\nu_2$ depending on $w$. 
	
	As $T_\sD$ splits we can use the surjection $\op3(1)^{\oplus4} \twoheadrightarrow \tp3$ to construct a free resolution for $C= \sing(\sD)$.
	\begin{equation}\label{seq:ressing-1}
		0\longrightarrow \op3(-5) \oplus \op3(-4) \oplus \op3(-3) \longrightarrow \op3(-3)^{\oplus4} \longrightarrow \mathscr{I}_C \longrightarrow 0.
	\end{equation}
	In particular, $C$ is arithmetically Cohen-Macaulay (ACM). This is true whenever $T_\sD$ splits and in a more general setting, see \cite[Theorem 1]{CJV}.

	If $c_2(T_\sD) \geq 0$, we have $\mu$-semistability, recall Theorem \ref{thm:stability}. In particular, for $c_2(T_\sD) = 0$, this implies that $c_3(T_\sD) = 0$, hence $T_\sD = \op3 \oplus \op3$. In this case, we get from $\phi \colon T_\sD \hookrightarrow \tp3$ two linear vector fields $\nu$ and $\nu'$. Clearly, the choice of any two (linearly independent) vector fields defines a distribution. The integrable cases, i.e. $[\nu, \nu'] = a \nu + b\nu'$ with $a,b\in \mathbb{C}$, come from representations of two-dimensional complex Lie algebras. Among these we find logarithmic foliations $L(1,1,1,1)$, that come from representations of the trivial algebra, and the exceptional foliations $E(3)$ that come from representations of the affine Lie algebra $\mathfrak{aff}(\mathbb{C})$. 
	
	As above $T_\sD$ splits and we can build a free resolution 
	\begin{equation}\label{seq:ressing0}
		0\longrightarrow \op3(-4)^{\oplus 3} \longrightarrow \op3(-3)^{\oplus4} \longrightarrow \mathscr{I}_C \longrightarrow 0
	\end{equation}
	for $C= \sing(\sD)$.
	
	We summarize the previous discussion in the following proposition.
	
	\begin{proposition}
		Let $\sD$ be a codimension one distribution on $\p3$ of degree $2$. If $c_2(T_\sD)\leq 0$ then one of the following holds.
		\begin{enumerate}
			\item $T_\sD = \op3(1) \oplus \op3(-1)$ and $\sing(\sD)$ is an ACM curve as in \eqref{seq:ressing-1};
			\item $T_\sD = \op3 \oplus \op3$ and $\sing(\sD)$ is an ACM curve as in \eqref{seq:ressing0}.
		\end{enumerate}
	\end{proposition}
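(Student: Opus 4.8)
The plan is to reduce everything to Theorem \ref{thm:stability} together with the bound $c_2(T_\sD)\geq -1$ recorded before that theorem, and then to produce the two free resolutions by hand. First I would observe that since every degree $2$ distribution satisfies $-1\leq c_2(T_\sD)\leq 6$, the hypothesis $c_2(T_\sD)\leq 0$ leaves only the values $c_2(T_\sD)=-1$ and $c_2(T_\sD)=0$. In the first case Theorem \ref{thm:stability}(i) gives $T_\sD=\op3(1)\oplus\op3(-1)$ at once; in the second case Theorem \ref{thm:stability} yields $\mu$-semistability, and a $\mu$-semistable rank $2$ reflexive sheaf with $c_1=c_2=0$ is forced to have $c_3=0$ and to be $\op3\oplus\op3$, giving the stated splitting. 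In both cases \eqref{eq:chern sing1 deg2} forces $c_3(T_\sD)=0$, so the zero-dimensional sheaf $U$ of \eqref{seq:sing decomp dist} vanishes and $Z=\sing(\sD)=C$ is a pure Cohen--Macaulay curve; thus resolving $\mathscr{I}_Z=\mathscr{I}_C$ is the same as resolving the ideal of the curve.

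The core step is the construction of the resolutions. Twisting the defining sequence \eqref{seq:dist-p3} by $\op3(-4)$ exhibits $\mathscr{I}_C$ as the cokernel of the inclusion $T_\sD(-4)\into\tp3(-4)$. I would then splice this against the twisted Euler sequence
\[
0 \longrightarrow \op3(-4) \longrightarrow \op3(-3)^{\oplus 4} \longrightarrow \tp3(-4) \longrightarrow 0,
\]
and compute the kernel $N$ of the composite surjection $\op3(-3)^{\oplus 4}\onto\mathscr{I}_C$. A snake-lemma (mapping-cone) chase places $N$ in an extension $0\to\op3(-4)\to N\to T_\sD(-4)\to 0$. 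Since $T_\sD(-4)$ splits as a sum of line bundles in each case, namely $\op3(-3)\oplus\op3(-5)$ when $c_2=-1$ and $\op3(-4)^{\oplus 2}$ when $c_2=0$, the governing group $\Ext^1(\,\cdot\,,\op3(-4))$ is a direct sum of terms $H^1(\op3(k))$, all of which vanish on $\p3$; hence the extension splits and $N$ is itself a sum of line bundles. Reading off the result produces exactly \eqref{seq:ressing-1} for $c_2=-1$ and \eqref{seq:ressing0} for $c_2=0$.

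Finally, each of these is a length-one free resolution of $\mathscr{I}_C$, equivalently a length-two resolution of $\op3/\mathscr{I}_C$, so $\op3/\mathscr{I}_C$ has projective dimension equal to $\codim C=2$; by the Auslander--Buchsbaum characterization this is precisely the statement that $C$ is arithmetically Cohen--Macaulay. The only real obstacle is the bookkeeping in the splicing step and the verification that the extension splits, but both are routine once the twists are fixed, since every $\Ext^1$ that appears collapses to a vanishing $H^1(\op3(k))$.
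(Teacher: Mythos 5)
Your proof is correct and follows essentially the same route as the paper: the paper identifies $T_\sD$ via Theorem \ref{thm:stability} (together with the standard fact that a $\mu$-semistable rank $2$ reflexive sheaf with $c_1=c_2=0$ is trivial) and then constructs \eqref{seq:ressing-1} and \eqref{seq:ressing0} precisely by composing the Euler surjection $\op3(1)^{\oplus 4}\onto\tp3$ with $\tp3\onto\mathscr{I}_Z(4)$ from \eqref{seq:dist-p3}; your splicing and snake-lemma computation of the kernel, with the splitting of the extension $0\to\op3(-4)\to N\to T_\sD(-4)\to 0$ coming from the vanishing of $H^1(\op3(k))$, is exactly the detail the paper leaves implicit in its one-line construction. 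The only genuine difference is at the end: the paper deduces that $\sing(\sD)$ is ACM by citing the general result \cite[Theorem 1]{CJV} on split tangent sheaves, whereas you prove it directly from the length-one resolution via Auslander--Buchsbaum (equivalently, $H^1(\mathscr{I}_C(l))=0$ for all $l$ follows at once from the resolution, since line bundles on $\p3$ have no intermediate cohomology, and this vanishing is what lets the sheaf resolution pass to a graded resolution of the saturated ideal). Both are fine; yours has the advantage of being self-contained.

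One small correction to the bookkeeping: \eqref{eq:chern sing1 deg2} does not by itself force $c_3(T_\sD)=0$, since it expresses $c_3$ in terms of $\deg C$ and $p_a(C)$, which you do not know at that point. What forces $c_3(T_\sD)=0$ is that in both cases $T_\sD$ is a direct sum of line bundles, hence a locally free sheaf of rank $2$, and any rank $2$ locally free sheaf on $\p3$ has vanishing third Chern class; then the formula $c_3(T_\sD)={\rm length}(U)$ from \eqref{eq:chern-deg,pa} gives $U=0$ in \eqref{seq:sing decomp dist}, so $Z=\sing(\sD)=C$ is a Cohen--Macaulay curve, which is what your argument actually uses.
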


	For $c_2(T_\sD) = 1$ we have two possibilities: either $c_3(T_\sD) = 0$ and $T_\sD$ is $\mu$-stable; or $c_3(T_\sD) = 2$ and $T_\sD$ must be strictly $\mu$-semistable. Indeed, this holds for any reflexive sheaf with such Chern classes, see \cite[Lemma 2.1]{Ch}. Moreover, if $c_3(T_\sD) = 0$ then $T_\sD$ is a null correlation bundle, see \cite[Lemma 4.3.2]{OSS}. We will show, in the next two results, that distributions with these invariants do exist. We recall that a curve is arithmetically Buchsbaum if its Rao module $H_\ast^1(\mathscr{I}_C)$ is annihilated by the irrelevant ideal, see \cite{Am}; in particular, this is true if $H_\ast^1(\mathscr{I}_C)$ is supported in only one degree.
	
	\begin{proposition}\label{prop:case10}
		Let $N$ be a null correlation bundle on $\mathbb{P}^3$. Then there exists a distribution $\sD$ such that $T_\sD = N$. Moreover, its singular scheme is an arithmetically Buchsbaum curve $C$ of degree $5$ and arithmetic genus $1$.
	\end{proposition}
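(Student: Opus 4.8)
The plan is to build the distribution directly from the null correlation bundle $N$ using Lemma \ref{lem:bertini}, and then to analyze the resulting singular scheme through the cohomology encoded by the spectrum. First I would recall that a null correlation bundle $N$ on $\p3$ is a stable rank $2$ bundle with $c_1(N)=0$, $c_2(N)=1$, $c_3(N)=0$, and that $N(1)$ is globally generated (this is classical; see \cite[Lemma 4.3.2]{OSS}). With global generation in hand, Lemma \ref{lem:bertini} applies verbatim to $F:=N$: since $F(1)$ is globally generated and $c_1(F)=0$, there is a monomorphism $\phi\colon N\into\tp3$ with torsion-free cokernel, and the associated distribution $\sD$ has degree $2$ with $T_\sD\simeq N$ and $(c_2(T_\sD),c_3(T_\sD))=(1,0)$. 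This immediately yields the first assertion.

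Next I would pin down $C=\sing_1(\sD)$ via the Chern class formulas of display \eqref{eq:chern sing1 deg2}, which are valid because $d=2$. Setting $c_2(T_\sD)=1$ gives $\deg(C)=5$, and setting $c_3(T_\sD)=0$ gives $18-20+2p_a(C)=0$, so $p_a(C)=1$; since $c_3(T_\sD)=\lt(U)=0$, the scheme $Z$ is in fact pure one-dimensional, so $C=\sing(\sD)$ is a Cohen--Macaulay curve of degree $5$ and arithmetic genus $1$. (This matches the penultimate-but-one line of Table \ref{deg 2 table}, an elliptic curve of degree $5$.) The degree and genus are thus forced purely numerically, with no further work.

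The substantive point is the arithmetically Buchsbaum claim, and this is where I expect the main obstacle to lie. By the discussion following Lemma \ref{lem:bound spectrum}, and since the spectrum of $T_\sD=N$ with $c_2=1$, $c_3=0$ is $\{0\}$, I would read off the intermediate cohomology of $N$ from the defining formulas \eqref{eq:def spectrum}: these give $h^1(N(p))$ and $h^2(N(p))$ in terms of the single spectral value $k_1=0$, showing that the graded module $\bigoplus_p H^1(N(p))$ is concentrated in a single degree. The goal is then to transfer this to the Rao module $M_C=\bigoplus_p H^1(\mathscr{I}_C(p))$. Using the exact sequence \eqref{seq:dist-p3} for $d=2$ together with the vanishing $U=0$ (so $\mathscr{I}_C=\mathscr{I}_Z$), one obtains $H^1(\mathscr{I}_C(p))\cong H^2(T_\sD(p-4))$ for all $p$, so the Rao module of $C$ is (up to a shift) identified with $\bigoplus_p H^2(N(p-4))$. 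The plan is to show, from the spectrum $\{0\}$, that this module is nonzero in exactly one degree; by the criterion recalled just before the statement (a curve whose Rao module is supported in a single degree is arithmetically Buchsbaum, cf. \cite{Am}), this concludes the proof.

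The step I would single out as the crux is the precise bookkeeping in the last paragraph: one must verify that $H^2(N(p-4))$ vanishes except for a single value of $p$, which requires feeding $k_1=0$ into the second line of \eqref{eq:def spectrum} and checking that $h^1(\p1,\op1(p-3))$ is nonzero for exactly one $p$ in the relevant range, while separately confirming $H^2(T_\sD(p))=0$ for $p\ge -1$ by Lemma \ref{lem:h2=0}(iv). Once the single-degree concentration is established, the Buchsbaum property is immediate, and the proof is complete.
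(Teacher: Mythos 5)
Your first two steps coincide with the paper's own proof: global generation of $N(1)$ plus Lemma \ref{lem:bertini} gives a distribution $\sD$ of degree $2$ with $T_\sD=N$, and the formulas \eqref{eq:chern sing1 deg2}, together with $c_3(T_\sD)=\lt(U)=0$ (so $Z=C$ is a pure Cohen--Macaulay curve), force $\deg C=5$ and $p_a(C)=1$. The divergence, and the gap, is in the Buchsbaum step. You propose to control the whole Rao module $\bigoplus_m H^1(\mathscr{I}_C(m))$ via the spectrum $\{0\}$ of $N$ through \eqref{eq:def spectrum}. But those formulas are valid only in restricted ranges: they give $h^1(N(p))$ for $p\leq -1$ and $h^2(N(p))$ for $p\geq -3$. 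Since $H^1(\mathscr{I}_C(m))\hookrightarrow H^2(N(m-4))$, the twists $m\leq 0$ require knowledge of $h^2(N(l))$ for $l\leq -4$ --- equivalently, by Serre duality (using that $N$ is locally free and self-dual), of $h^1(N(l'))$ for $l'\geq 0$ --- and neither range is governed by the spectrum. Your fallback, ``confirming $H^2(T_\sD(p))=0$ for $p\geq -1$ by Lemma \ref{lem:h2=0}(iv),'' both misquotes the lemma (it gives vanishing only for $p\geq 1$, and only the bound $h^2(T_\sD)\leq 1$ at $p=0$) and addresses the wrong side: the twists $p\geq -3$ are exactly those the spectrum already handles, while $p\leq -4$ remains untouched. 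Without that range you cannot even conclude $h^1(\mathscr{I}_C)=0$ (i.e.\ that $C$ is connected), let alone that the Rao module is concentrated in a single degree. (A smaller imprecision: the sequence only gives an injection $H^1(\mathscr{I}_C(p))\into H^2(N(p-4))$ with cokernel mapping to $H^2(\tp3(p-4))$, not an isomorphism for all $p$; but injectivity is all one needs for the vanishing statements.)

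The gap is fixable, but it needs an input beyond the spectrum, and this is precisely what the paper uses: from the defining sequence of the null correlation bundle, $0\to N\to \tp3(-1)\to \op3(1)\to 0$, one gets $H^2(N(l))\cong H^2(\tp3(l-1))$ for \emph{every} $l\in\Z$, and $H^2(\tp3(m))$ is nonzero only for $m=-4$, where it is one-dimensional. Feeding this into the exact sequence $0\to H^1(\mathscr{I}_C(l+4))\to H^2(N(l))\to H^2(\tp3(l))$ handles all twists uniformly and yields $h^1(\mathscr{I}_C(m))=0$ for $m\neq 1$ and $h^1(\mathscr{I}_C(1))=1$, whence $C$ is arithmetically Buchsbaum. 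If you want to keep your spectrum bookkeeping for the twists $m\geq 1$, you must supplement it for $m\leq 0$ with the classical vanishing $h^1(N(l))=0$ for $l\geq 0$ (which again follows from the defining sequence of $N$) combined with Serre duality.
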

	
	\begin{proof}
		As $N(1)$ is a quotient of $\Omega_{\p3}^1(2)$, it is globally generated. We then apply Lemma \ref{lem:bertini} to show that there exists a distribution $\sD$ with tangent sheaf $T_\sD =N$. Note that from \eqref{eq:chern sing1 deg2} $C= \sing(\sD)$ is a curve of degree $5$ and genus $1$. From the sequence
		\[
		0 \longrightarrow N \longrightarrow\tp3 \longrightarrow \mathscr{I}_C(4) \longrightarrow 0
		\]
		we get, for $l\in \mathbb{Z}$,
		\[
		0 \longrightarrow H^1(\mathscr{I}_C(4+l)) \longrightarrow H^2(N(l)) \longrightarrow H^2(\tp3(l));
		\]
		on the other hand, $H^2(N(l)) = H^2(\tp3(l-1))$. Therefore, $h^1(\mathscr{I}_C(l)) = 0$ for $l\neq 1$ and $h^1(\mathscr{I}_C(1)) = 1$ and $C$ is arithmetically Buchsbaum. 
	\end{proof}
	
	We note that the spectrum of a null correlation bundle is $\{0\}$; it follows from the fact that $h^1(N(l)) =0$ for $l\neq -1$ and $h^1(N(-1)) = 1$.

	\begin{proposition}\label{prop:case12}
		Let $F$ be $\mu$-semistable rank 2 reflexive sheaf on $\p3$ with Chern classes $c_1(F) = 0$, $c_2(F) = 1$ and $c_3(F) = 2$. 
		Then $F$ can be realized as the tangent sheaf of a codimension one distribution of degree 2. Moreover, $\sing_1(\sD)$ is an ACM curve of degree 5 and genus 2 contained in a quadric. 
	\end{proposition}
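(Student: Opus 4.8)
The plan is to mimic the strategy of Proposition \ref{prop:case10}: exhibit $F(1)$ as a globally generated sheaf, invoke Lemma \ref{lem:bertini} to realize $F$ as a tangent sheaf, and then read off the singular curve from the Chern class formulas \eqref{eq:chern sing1 deg2}. First I would pin down the structure of $F$. With $(c_1,c_2,c_3)=(0,1,2)$ the sheaf is strictly $\mu$-semistable (exactly the content recalled in Section \ref{sect:c2<=1} via \cite[Lemma 2.1]{Ch}), so $h^0(F)\ne0$. A nonzero section cannot vanish in codimension one, lest it produce a destabilizing $\op3(1)\into F$; hence its zero scheme is a curve of degree $c_2(F)=1$, i.e. a line $L$, and a Chern class bookkeeping (the presence of an embedded point would force $c_3(F)=0$) shows the zero scheme is exactly $L$. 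This yields
\[
0 \longrightarrow \op3 \longrightarrow F \longrightarrow \mathscr{I}_L \longrightarrow 0 .
\]
Twisting by $1$ and using that $\op3(1)$ and $\mathscr{I}_L(1)$ are both globally generated while $H^1(\op3(1))=0$, the middle term $F(1)$ is globally generated as well.

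With $F(1)$ globally generated and $c_1(F)=0$, Lemma \ref{lem:bertini} (in the form stated immediately after it) produces a monomorphism $F\into\tp3$ with torsion free cokernel, i.e. a codimension one distribution $\sD$ of degree $2$ with $T_\sD=F$ and $(c_2(T_\sD),c_3(T_\sD))=(1,2)$. Feeding these into \eqref{eq:chern sing1 deg2} gives $\deg(C)=5$ and $p_a(C)=2$ for $C=\sing_1(\sD)$, as claimed.

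It remains to prove that $C$ is ACM and lies on a quadric. Note first that $C$ is non-degenerate: a plane quintic has arithmetic genus $6\ne2$, so $h^0(\mathscr{I}_C(1))=0$. I would then compute the Rao module $\bigoplus_p H^1(\mathscr{I}_C(p))$. By the bound $h^1(\mathscr{I}_C(p))\le h^2(T_\sD(p-4))$ recorded in Section \ref{sect:pre}, together with the values of $h^2(F(q))$ (which vanish for $q\ge-1$, as one computes from the extension above using $H^2(\mathscr{I}_L(q))=H^1(\op1(q))$), the module can only be nonzero for $p=1,2$; the cases $p\le0$ and $p\ge3$ are dispatched by a direct Riemann-Roch/Serre-duality computation on $C$. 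For $p=1$ one has $\chi(\mathscr{I}_C(1))=0$ and $h^2(\mathscr{I}_C(1))=h^1(\mathcal{O}_C(1))=h^0(\omega_C(-1))=0$, so that $h^1(\mathscr{I}_C(1))=h^0(\mathscr{I}_C(1))=0$.

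The remaining and genuinely delicate degree is $p=2$, which is also where ``contained in a quadric'' is decided. Here $\chi(\mathscr{I}_C(2))=1$ and $h^2(\mathscr{I}_C(2))=h^0(\omega_C(-2))=0$, so $h^0(\mathscr{I}_C(2))=1+h^1(\mathscr{I}_C(2))$; thus proving $C$ lies on a unique quadric is equivalent to the vanishing $h^1(\mathscr{I}_C(2))=0$. The main obstacle is therefore to rule out $C$ lying on a pencil of quadrics: for an integral non-degenerate quintic this is immediate, since two distinct quadrics with no common component meet in a curve of degree $4<5$, forcing at most one quadric through $C$. I expect the heart of the argument to lie in controlling the geometry of $C$ (irreducibility and reducedness, guaranteed for the general morphism furnished by the Bertini-type Lemma \ref{lem:bertini}, or a direct identification with a bidegree $(2,3)$ curve) so that this intersection count applies. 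Once $h^1(\mathscr{I}_C(2))=0$ is in hand the Rao module vanishes identically, whence $C$ is ACM; it lies on a unique quadric $Q$, and matching $\deg(C)=5,\ p_a(C)=2$ against the bidegree formula $(a,b)\mapsto(a+b,(a-1)(b-1))$ identifies $C$ as a curve of type $(2,3)$ on $Q$. Finally this is consistent with the \hyperref[maintheorem]{Main Theorem}: it is the curve $C$, not the full scheme $\sing(\sD)=Z$, that lies on $Q$, since a short computation gives $h^0(\mathscr{I}_Z(2))=0$ (using $H^0(\tp3(-2))=H^1(F(-2))=0$), so the two points of $U$ lie off $Q$ and $1$ does not occur in the spectrum $\{-1\}$.
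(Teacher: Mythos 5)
Your first half is sound and even streamlines the paper's argument: instead of quoting Chang's cohomology table and invoking Castelnuovo--Mumford (as the paper does), you derive global generation of $F(1)$ directly from the extension $0 \to \op3 \to F \to \mathscr{I}_L \to 0$, and the application of Lemma \ref{lem:bertini} and of \eqref{eq:chern sing1 deg2} is exactly as in the paper. The second half, however, has a genuine gap precisely at the step you flag as the heart: ruling out $h^0(\mathscr{I}_C(2))=2$. Your intersection-count argument only applies when $C$ is integral, and integrality is not available. Lemma \ref{lem:bertini} contains no genericity or smoothness statement about the singular scheme, so nothing ``furnished'' by it guarantees $C$ irreducible and reduced; more decisively, the paper observes immediately after this proposition that for logarithmic foliations in $L(1,1,2)$ the curve $\sing_1(\sD)$ is a union of two conics and a line, so non-integral $C$ genuinely occurs for these invariants, and the statement (which the paper proves for \emph{every} distribution with such a tangent sheaf) cannot be reduced to a generic situation. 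The paper's proof covers all cases: if $h^0(\mathscr{I}_C(2))=2$, two quadrics through a degree-$5$ curve must share a linear factor $f$, so the pencil is spanned by $ff_1, ff_2$; whether $\{f,f_1,f_2\}$ is linearly independent (giving $C$ equal to a plane quartic union a line) or dependent (giving $C$ inside the double plane $f^2$, analyzed via Hartshorne--Schlesinger), one concludes that $C$ contains a plane quartic, and this contradicts Lemma \ref{lem:incsat}, since the truncated ideal $(I_{\text{quartic}})_{\le 3}=(f)$ would force $\sing(\sD)$ to contain the plane $\{f=0\}$.

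The same issue undermines the steps you call routine. For $p\le 0$ and $p=1$ you rely on vanishings such as $h^0(\mathcal{O}_C(p))=0$ for $p<0$ and $h^0(\omega_C(-1))=0$ on degree grounds; these are \emph{not} formal consequences of Riemann--Roch and Serre duality for arbitrary Cohen--Macaulay curves. For instance, a double line of genus $-2$ (which Corollary \ref{cor:2line} does not exclude, and which occurs as $\sing_1(\sD)$ elsewhere in Table \ref{deg 2 table}) satisfies $h^0(\mathcal{O}_C(-1))\ne 0$, and a plane quartic component would give $h^0(\omega_C(-1))\ne 0$; excluding such components is exactly what is at stake. The paper circumvents this with a counting argument you do not have: since $C$ is non-planar, $h^0(\mathscr{I}_C(l))=0$ for $l\le 1$, so \eqref{seq:hh decomp} gives an injection $H^0(U)\hookrightarrow H^1(\mathscr{I}_Z(l))$; as $h^0(U)=c_3(F)=2$ while $h^1(\mathscr{I}_Z(l))\le h^2(F(l-4))=2$ (the exact value from Chang's table, not just the bound your extension yields), this injection is an isomorphism and $h^1(\mathscr{I}_C(l))=0$. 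In short, both the delicate step and the ``routine'' ones require the two ingredients your proposal lacks: the syzygy constraint of Lemma \ref{lem:incsat} and the $U$-counting with precise values of $h^2(F(\cdot))$.
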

	
	\begin{proof}
		Let $F$ be as in the statement. Consulting \cite[Table 2.3.1]{Ch} we check that: $h^1(F(p)) = 0$ for every $p\in \mathbb{Z}$; $h^3(F(p)) = 0$ for $p\geq -3$; and 
		\[
		h^2(F(p)) = \begin{cases}
			0, & p \geq -1\\ 1, & p=-2\\ 2, & p\leq -3
		\end{cases}.
		\]
		Then the Castelnuovo--Mumford criterion implies that $F(1)$ is globally generated. Owing to Lemma \ref{lem:bertini}, there exists a codimension one distribution $\sD$ of degree 2 whose tangent sheaf is precisely $F$.
		
		For the second claim, let $C:=\sing_1(\sD)$; it follows from the equations in display \eqref{eq:chern sing1 deg2} that $C$ has degree $5$ and arithmetic genus $2$. We must show that $h^1(\mathscr{I}_C(p))=0$ for every $p$, so that $C$ is ACM.
		
		From the definition of $C$ in the sequence in display \eqref{seq:sing decomp dist}, we have that 
		\begin{equation} \label{seq:hh decomp}
			H^0(\mathscr{I}_Z (l)) \rightarrow H^0(\mathscr{I}_C (l)) \rightarrow H^0(U) \rightarrow H^1(\mathscr{I}_Z (l)) \rightarrow H^1(\mathscr{I}_C (l)) \rightarrow 0 
		\end{equation}
		and $h^0(U) = c_3(F) = 2$. Also recall that $h^1(\mathscr{I}_Z (l)) = h^2(F(l-4))$ for $l\neq 0$ and $h^1(\mathscr{I}_Z) \leq h^2(F(-4))$.
		
		For $l\geq 3$, we see that $h^1(\mathscr{I}_C(l)) \leq h^1(\mathscr{I}_Z (l)) = h^2(F(l-4)) = 0$.
		
		For $l\leq 1$, we have $h^0(\mathscr{I}_C(l))=0$, since $C$ cannot be planar. On the other hand, $h^1(\mathscr{I}_Z (l)) \leq h^2(F(l-4)) = 2$. This implies that $H^0(U) \simeq H^1(\mathscr{I}_Z (l))$ hence $h^1(\mathscr{I}_C(l))=0$.
		
		For $l=2$, we have $h^1(\mathscr{I}_Z(2)) = h^2(F(-2)) = 1$; and as $h^1(F(-2)) = 0$ we also have $h^0(\mathscr{I}_Z(2))= 0$. From \eqref{seq:hh decomp} we only need to show that $h^0(\mathscr{I}_C(2)) = 1$; the only other possibility is $h^0(\mathscr{I}_C(2)) = 2$.
		
		Suppose that $h^0(\mathscr{I}_C(2)) = 2$. Since $\deg C = 5$, then $H^0(\mathscr{I}_C(2))$ is spanned by $ff_1$ and $ff_2$ for some linear polynomials $f, f_1, f_2 \in H^0(\op3(1))$. First we claim that this would imply that $C$ contains a planar subcurve of degree $4$ which is impossible, due to Lemma \ref{lem:incsat}. Therefore $h^0(\mathscr{I}_C(2)) = 1$ and $C$ is ACM which concludes the proof.
		
		Now we prove the claim. We have two cases:
		\begin{enumerate}
			\item $\{f, f_1, f_2\}$ is linearly independent then $C$ is the union of a plane quartic $C'$ and the line $L = \{f_1 = f_2 =0\}$;
			\item $\{f, f_1, f_2\}$ is linearly dependent, so we may assume $f_2= f$; then $C$ is a curve in the double plane defined by $f^2$.
		\end{enumerate}
		In the second case, owing to \cite[Proposition 2.1]{HSdp}, we have
		\[
		0 \longrightarrow \mathscr{I}_Y(-H) \longrightarrow \mathscr{I}_C \longrightarrow \mathscr{I}_{V/H}(-P) \longrightarrow 0,
		\]
		where $V\subset Y \subset P \subset H = \{f=0\}$, for curves $Y$ and $P$ and a $0$-dimensional subscheme $V$. Moreover, $\deg C = \deg Y + \deg P$ and $P$ is the largest curve contained in $C\cap H$. Also, $Y$ is the residual intersection of $C$ and $H$ hence it must be the line $\{f = f_1 = 0\}$. Then $P$ is a plane quartic contained in $C$.
		
	\end{proof}
	
	Among these distributions we find logarithmic foliations in $L(1,1,2)$, which is singular at a union of two conics and a line (plus two points); in particular, that unique quadric from the statement of Proposition \ref{prop:case12} is reducible. It is not hard to find non-integrable distributions singular at a smooth curve contained in a smooth quadric.
	
	Finally, the only possible spectrum of a $\mu$-semistable sheaf $F$ with $c_1(F) = 0$, $c_2(F) = 1$ and $c_3(F) = 2$ is $\{-1\}$.

	%%%%%%%%%%%%%%%%%%%%%%%%%%%%%%%%%%%%%%%%%%%%%%
	%%%%%%%%%%%%%%%%%%%%%%%%%%%%%%%%%%%%%%%%%%%%%%

	\section{Distributions with \texorpdfstring{$c_2(T_\sD)=2 $}{c2 =2}}\label{sect:c2=2}
	
	If $\sD$ is a degree two distribution with $c_2(T_\sD) = 2$ then the bound in \eqref{eq:bound c3} implies $c_3(T_\sD) = 0, 2$ or $4$. From Theorem \ref{thm:stability} we know that $T_\sD$ is $\mu$-stable except for the case $c_3(T_\sD)= 4$ where it can be strictly $\mu$-semistable.
	
	The existence of degree $2$ codimension one distributions $\sD$ such that $c_2(T_\sD)=2$ and $c_3(T_\sD) = 0$ or $4$ was established in \cite[Theorem 9.5]{CCJ1} and \cite[Section 11.5]{CCJ1}, respectively. We describe them here for the sake of completeness.
	
	If $c_3(T_\sD) = 0$ then $T_\sD$ is an instanton bundle of charge two. Indeed, any $\mu$-stable bundle with these Chern classes satisfies $h^1(T_\sD(-2)) = 0$, see \cite[Lemma 9.4]{H3}. The singular scheme $\sing(\sD)$ is a curve $C$ of degree $4$ and genus $-1$. The Hilbert scheme $\calh(4,-1)$ has three irreducible components, described in \cite[Proposition 6.1]{NS:deg4}. Since $T_\sD$ is an instanton, we know that $h^0(\mathscr{I}_C(2)) = h^1(T_\sD(-2)) = 0$; hence $C$ can only be the disjoint union of a line and a twisted cubic (or some degeneration of that). We note that as $h^1(T_\sD(-1)) = 2$ the only possible spectrum is $\{0,0\}$.
	
	\begin{example}
		Consider the curve $C$ defined by 
		\[
		I_C = (z^2-yw, yz-xw, y^2-xz) \cap (x,w).
		\]
		Using its syzygies we could find the $1$-form
		\begin{align*}
			\omega= & (xz^2-xyw+y^2w-xzw+yzw+z^2w-xw^2-yw^2)dx\\ 
			&+(-xyz+xz^2+x^2w-xyw+yzw+z^2w-xw^2-yw^2)dy\\
			&+(xy^2-x^2z-xyz+x^2w-y^2w+xzw-yzw+xw^2)dz \\
			&+(-xyz-2xz^2+x2w+2xyw+y^2w-xzw)dw
		\end{align*}
		which is singular precisely at $C$. Thus $T_\sD$ is an instanton bundle of charge two. 
	\end{example}

	If $c_3(T_\sD) = 4$ then $\sing(\sD)$ is composed by $4$ points and a curve $C$ of degree $4$ and genus $1$. Any such curve is (some degeneration of) an elliptic quartic curve, see \cite{NS:deg4}. In particular, $h^0(\mathscr{I}_C(2)) = 2$ and there exists a unique pencil of quadrics containing $C$. If $C$ is general, this pencil defines a degree two rational foliation in $R(2,2)$; then $\sing_0(\sD)$ is composed by the vertices of the cones appearing in this pencil. Moreover, $T_\sD$ is stable in this case. Conversely, any stable sheaf with these Chern classes can be realized as the tangent sheaf of a distribution that, in general, is not integrable.
	
	\begin{proposition}
		Let $F$ be a $\mu$-stable rank two reflexive sheaf on $\mathbb{P}^3$ with \linebreak $(c_1(F),c_2(F),c_3(F))=(0,2,4)$. Then there exists a distribution $\sD$ such that $T_\sD = F$. Moreover, $\sing_1(\sD)$ is a, possibly degenerated, elliptic quartic.
	\end{proposition}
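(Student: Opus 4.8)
The plan is to realize $F$ as a tangent sheaf by proving that $F(1)$ is globally generated, and then to read off $\sing_1(\sD)$ directly from the Chern classes. Granting global generation, Lemma \ref{lem:bertini} produces a codimension one distribution $\sD$ of degree $2$ whose tangent sheaf is precisely $F$ and which satisfies $(c_2(T_\sD),c_3(T_\sD)) = (2,4)$; the formulas in \eqref{eq:chern sing1 deg2} then force $C := \sing_1(\sD)$ to have degree $4$ and arithmetic genus $1$.

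To establish global generation I would first pin down the cohomology of $F$ via its spectrum $\{k_1,k_2\}$. By Remark \ref{rem:properties of spectrum}(i) we have $k_1+k_2=-2$; if $-1$ does not occur twice, then item (iv) forces $0$ to occur, so the spectrum would be $\{0,-2\}$, which is excluded by item (iii). Hence the spectrum is $\{-1,-1\}$, and \eqref{eq:def spectrum} yields $h^2(F(-1))=0$ and $h^2(F)=0$. Since $F$ is $\mu$-stable with $c_1(F)=0$ we have $h^0(F)=h^0(F(-2))=0$, and as $F^\vee\simeq F$ Serre duality gives $h^3(F)=h^0(F(-4))^\vee=0$ and $h^3(F(-2))=h^0(F(-2))^\vee=0$. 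A Riemann--Roch computation gives $\chi(F)=0$, whence $h^1(F)=-\chi(F)=0$. Collecting these vanishings, $F$ is $1$-regular, so the Castelnuovo--Mumford criterion shows that $F(1)$ is globally generated, exactly as needed to invoke Lemma \ref{lem:bertini}.

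It remains to identify $C$. Being a curve of degree $4$ and arithmetic genus $1$, the classification of \cite{NS:deg4} places $C$ in the component of the Hilbert scheme $\calh(4,1)$ whose generic member is a smooth elliptic quartic, that is, a complete intersection of two quadrics; thus $C$ is a, possibly degenerate, elliptic quartic, and in particular $h^0(\mathscr{I}_C(2))=2$, matching the pencil of quadrics discussed above.

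The one genuinely delicate point is the vanishing $h^1(F)=0$: this twist lies outside the range controlled directly by \eqref{eq:def spectrum}, so I expect to derive it from the Riemann--Roch formula for rank $2$ reflexive sheaves combined with the spectrum-driven vanishings of $h^0, h^2$ and $h^3$ (alternatively one may simply consult the cohomology tables of \cite{Ch}, as in the proof of Proposition \ref{prop:case12}). Everything else reduces to a direct application of Lemma \ref{lem:bertini} and the degree--genus bookkeeping of \eqref{eq:chern sing1 deg2}.
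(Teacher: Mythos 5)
Your proposal is correct and follows essentially the same route as the paper: establish that $F(1)$ is globally generated via the Castelnuovo--Mumford criterion, apply Lemma \ref{lem:bertini} to realize $F$ as a tangent sheaf, and identify $\sing_1(\sD)$ through the degree--genus computation of \eqref{eq:chern sing1 deg2} together with the classification in \cite{NS:deg4}. The only (harmless) difference is that the paper obtains the required vanishings $h^1(F)=h^2(F(-1))=h^3(F(-2))=0$ by citing \cite[Table 2.12.2]{Ch}, whereas you derive them self-containedly from the spectrum $\{-1,-1\}$, stability, Serre duality and Riemann--Roch --- an alternative you yourself point out, and whose steps all check out.
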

	
	\begin{proof}
		Owing to \cite[Table 2.12.2]{Ch} and Castelnuovo--Mumford criterion, $F(1)$ is globally generated. Then, due to Lemma \ref{lem:bertini}, there exists a codimension one distribution of degree 2 whose tangent sheaf is precisely $F$. The assertion about the singular scheme follows from our previous discussion.
	\end{proof}
	
	We note that we can also find distributions with $T_\sD$ strictly $\mu$-semistable. 
	
	\begin{example}\label{ex:c2=2 c3=4 semistable}
		Consider the curve $C$ given by the ideal 
		\[
		I_C = (w^2 - (y-x)z, z^2-xy)
		\]
		and the double point $I_{p_1} = (x,y^2,w)$. Using the syzygies we find the $1$-form
		\begin{align*}
			\omega = & (-xy2+xz2-xyw+xzw-yzw+z2w+zw2+w3)dx\\ &+(x2y-xz2+xzw-yzw+w3)dy+(-x2z+xyz-xw2)dz \\&+(x2y-x2z+y2z-xz2-xw2-yw2)dw
		\end{align*}
		singular at the prescribed singular scheme and also at the double point given by $I_{p_2} =(y + w, 2x - z - w, (z+w)^2)$. Computing the kernel of $\omega$ we see that $T_\sD$ has a free resolution 
		\[
		0 \longrightarrow \op3(-4) \longrightarrow \op3(-3)^{\oplus4} \longrightarrow \op3 \oplus \op3(-2)^{\oplus4} \longrightarrow T_\sD \longrightarrow 0.
		\]
		In particular, $h^0(T_\sD) = 1$ and $T_\sD$ is strictly $\mu$-semistable. It is also not hard to find a degree one foliation by curves tangent to this distribution.
	\end{example}
	
	We note that, from Remark \ref{rem:properties of spectrum}, the only possible spectrum for a $\mu$-semistable sheaf $F$ such that $c_1(F) = 0$, $c_2(F) = 2$ and $c_3(f)=4$ is $\{-1,-1\}$.
	
	Now consider the case $c_3(T_\sD) = 2$. Then the singular scheme is composed by $2$ points and a curve of degree $4$ and genus $0$. The Hilbert scheme $\calh(4,0)$ has two irreducible components, whose generic points are given by either a rational quartic curve, or a disjoint union of a plane cubic and a line. We now argue that the second case does not occur. 
	
	The homogeneous ideal of a disjoint union of a plane cubic and a line is a product $I_C = (h,f)(h_1,h_2)$ where $h,h_1$ and $h_2$ have degree $1$ and $f$ has degree $3$. Then $(I_C)_{\leq 3} = (hh_1, hh_2)$ and, due to Lemma \ref{lem:incsat}, $\sing(\sD)$ would have codimension one.
	
	For $C$ a rational quartic curve there exist distributions; we will now provide an example.
	
	\begin{example}\label{ex: c2 = c3 = 2}
		Fix the rational quartic curve given by 
		\[
		I_C = (yz - xw, z^3 - yw^2 , xz^2 - y^2w, y^3 - x^2z),
		\]
		and the point $(1:0:0:1)$. Using the syzygies we find 
		\begin{align*}
			&\quad\omega = (2xz^2-z^3-2y^2w+yw^2)dx \\&+(-2y^2z+2xz^2+2yz^2+2xyw-2y^2w-2xzw)dy\\ &+(2y^3-2x^2z-2xyz-2y^2z+xz^2+2x^2w+2xyw-y^2w+2yzw-2xw^2)dz \\&+(2y^3-2x^2z+y^2z-2yz^2-xyw+2xzw)dw
		\end{align*}
		which is singular at the prescribed scheme and at the point $(1:1:0:2)$. Therefore $T_\sD = \ker \omega $ has the desired Chern classes.
	\end{example}
	
	Also, from Remark \ref{rem:properties of spectrum}, we see that the only possible spectrum for a $\mu$-semistable sheaf $F$ such that $c_1(F) = 0$, $c_2(F) = 2$ and $c_3(F)=2$ is $\{-1,0\}$.

	%%%%%%%%%%%%%%%%%%%%%%%%%%%%%%%%%%%%%%%%%%%%%%
	%%%%%%%%%%%%%%%%%%%%%%%%%%%%%%%%%%%%%%%%%%%%%%

	\section{Distributions with \texorpdfstring{$c_2(T_\sD)=3 $}{c2 =3}}\label{sect:c2=3}
	
	In this case, we know that $T_\sD$ is $\mu$-stable, so that $c_3(T_\sD) \in \{0,2,4,6,8\}$. We start by recalling the case $c_3(T_\sD)=8$, which is discussed in \cite[Section 11.6]{CCJ1}. Next we will describe the cases $c_3(T_\sD) <8$.
	
	A general stable rank 2 reflexive sheaf $F$ such that $c_1(F) = 0$, $c_2(F) = 3$ and $c_3(F) = 8$ arises as the tangent sheaf of degree two distribution. This is due to $F(1)$ being globally generated, see \cite[Lemma 3.8]{Ch}, and Lemma \ref{lem:bertini}. The singular scheme is composed by $8$ points and a curve of degree 3 and genus 1, which can only be a plane cubic, see \cite[Proposition 3.1]{N}. Furthermore, $\{-2,-1,-1\}$ is the only possible spectrum for such sheaves. 
	
	Among these distributions we find rational foliations in $R(1,3)$. These are given by pencils of cubics generated by a triple plane and a general cubic surface, so that they intersect in codimension two. For a given plane cubic curve $C$ there exists a unique pencil of cubic surfaces whose base locus is $C$; thus we have a unique rational foliation for a general $C$. The $0$-dimensional part of the singular scheme is the vertex of the unique cone in this pencil. 
	
	\subsection{ Case \texorpdfstring{$ 2\leq c_3(T_\sD) \leq 6$}{0 < c3 <8} } \label{sect:c2=3 2-6}
	Consider $\calr(0,3,2l)$ the moduli space of stable rank two reflexive sheaves $F$ on $\p3$ satisfying $c_1(F) = 0$, $c_2(F) = 3$ and $c_3(F)= 2l$. We begin by noting that when $l>0$, Chang proved in \cite{Ch} that the moduli space $\calr(0,3,2l)$ is irreducible and contains a nonempty open subset
	\[
	\calr_0(0,3,2l) =\left\{ [F] \mid h^1(F(1))=0 \right\},
	\]
	for which we can prove the following result.

	\begin{proposition}\label{prop:inj gen c2=3}
		Let $l \in \{1,2,3\}$. Then every stable rank 2 reflexive sheaf $F$ with Chern classes $(c_1(F),c_2(F),c_3(F))=(0,3,2l)$ satisfying $h^1(F(1))=0$ admits a monomorphism $F\into\tp3$.
	\end{proposition}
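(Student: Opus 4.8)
The plan is to reduce the whole statement to Lemma \ref{lem:injective maps}, whose hypotheses require only that $F$ be $\mu$-stable of rank $2$ with $c_1(F)=0$ and $\Hom(F,\tp3)\neq\{0\}$, and whose dichotomy is controlled entirely by the two numbers $h^0(F(1))$ and $\hom(F,\tp3)$. Thus the proof amounts to computing these two invariants for $F\in\calr_0(0,3,2l)$ and verifying the lemma's criterion in each case $l=1,2,3$.

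First I would reduce $\hom(F,\tp3)$ to simpler cohomology. Since $F$ is a rank $2$ reflexive sheaf with $c_1(F)=0$, we have $F^\vee\simeq F$, so $\Hom(F,\tp3)=H^0(F\otimes\tp3)$. Tensoring the Euler sequence $0\to\op3\to\op3(1)^{\oplus4}\to\tp3\to0$ with $F$ and taking cohomology, using $h^0(F)=0$ (stability) together with the hypothesis $h^1(F(1))=0$, gives
\[
0\longrightarrow H^0(F(1))^{\oplus4}\longrightarrow H^0(F\otimes\tp3)\longrightarrow H^1(F)\longrightarrow 0,
\]
hence $\hom(F,\tp3)=4\,h^0(F(1))+h^1(F)$.

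Next I would pin down $h^0(F(1))$ and $h^1(F)$ by Riemann--Roch together with a handful of vanishings. A direct Chern-class computation gives $\chi(F(1))=l-1$ and $\chi(F)=l-4$. In $\chi(F(1))$ the terms $h^1(F(1))=0$ (hypothesis) and $h^3(F(1))=h^0(F(-5))=0$ (stability) vanish, and likewise $h^0(F)=h^3(F)=0$ in $\chi(F)$. It remains to kill the two middle terms $h^2(F(1))$ and $h^2(F)$; here I would invoke the spectrum. The constraints in Remark \ref{rem:properties of spectrum} force every spectral value $k_i\ge-2$ when $c_2(F)=3$ and $c_3(F)=2l$ with $l\le 3$: a value $\le-3$ would, by property (iii), drag the three distinct values $\{-3,-2,-1\}$ into the $3$-element spectrum, forcing its sum to be $-6$, which is impossible since the total must equal $-l\ge-3$. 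Feeding $k_i\ge-2$ into the formula \eqref{eq:def spectrum} (valid for $p\ge-3$) yields $h^2(F)=h^2(F(1))=0$. Consequently $h^0(F(1))=l-1$ and $h^1(F)=4-l$, and therefore $\hom(F,\tp3)=4(l-1)+(4-l)=3l$.

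Finally I would apply Lemma \ref{lem:injective maps}. For $l=1$ we have $h^0(F(1))=0\neq1$ and $\hom(F,\tp3)=3>0$, so alternative (i) applies; for $l=3$ we have $h^0(F(1))=2\neq1$, again alternative (i); for $l=2$ we hit the boundary case $h^0(F(1))=1$, where the precise count $\hom(F,\tp3)=6>4$ is exactly what makes alternative (ii) succeed. In each case an injective morphism $F\into\tp3$ exists. I expect the delicate point to be precisely this boundary case $l=2$: injectivity there hinges on the \emph{strict} inequality $\hom(F,\tp3)>4$, so the exact value $\hom(F,\tp3)=3l$ must be determined rather than merely bounded, which is why the vanishing of the $H^2$-terms through the spectral bound $k_i\ge-2$ is the genuine crux of the argument and not a routine step.
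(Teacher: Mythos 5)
Your proof is correct and follows the same skeleton as the paper's: both apply the Euler sequence (using $F^\vee\simeq F$ and $h^0(F)=h^1(F(1))=0$) to obtain $\hom(F,\tp3)=4h^0(F(1))+h^1(F)$, and then conclude by Lemma \ref{lem:injective maps}. The difference lies in how the two invariants are computed. The paper reads $h^0(F(1))=l-1$ and $h^1(F)=3-l$ off Chang's cohomology tables \cite[Tables 3.7.1, 3.4.1 and 3.5.1]{Ch}, getting $\hom(F,\tp3)=3l-1$; you instead compute them self-containedly via Riemann--Roch together with the spectrum bound $k_i\ge-2$ (which kills $h^2(F)$ and $h^2(F(1))$), getting $h^1(F)=4-l$ and $\hom(F,\tp3)=3l$. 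The two answers differ by $1$, and yours appears to be the correct one: Riemann--Roch gives $\chi(F)=l-4$, and since $h^0(F)=h^3(F)=0$ by stability and $h^2(F)=0$ (by your spectrum argument, or equally by the spectra $\{-1,0,0\}$, $\{-1,-1,0\}$, $\{-1,-1,-1\}$ that the paper itself records at the end of Section \ref{sect:c2=3 2-6}), one must have $h^1(F)=4-l$; the paper's value $3-l$ would force $h^2(F)=-1$. The discrepancy is harmless for the proposition, since both $3l-1$ and $3l$ are positive for $l=1,3$ and exceed $4$ for $l=2$, so the dichotomy of Lemma \ref{lem:injective maps} applies identically --- and, as you rightly stress, the strict inequality at $l=2$ is the one place where the exact count matters. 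In short: your route buys independence from Chang's tables (and flags a small arithmetic slip in the paper), while the paper's citation buys brevity.
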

	
	\begin{proof}
		Applying $\Hom(F, \ \cdot \ )$ to the Euler sequence and using that $h^1(F(1))=0$ we get
		\[
		0 \longrightarrow H^0(F(1))^{\oplus4} \longrightarrow \Hom(F,\tp3) \longrightarrow H^1(F) \longrightarrow 0, 
		\]
		so that $\hom(F,\tp3) = 4h^0(F(1)) + h^1(F)$. Consulting the cohomology tables \cite[Tables 3.7.1, 3.4.1 and 3.5.1]{Ch}, we get that $h^0(F(1)) = l-1$ and $h^1(F) = 3-l$, thus $\hom(F,\tp3) = 3l-1>0$. Therefore, the result follows from a direct application of Lemma \ref{lem:injective maps}.
	\end{proof}
	
	It is not clear if every such sheaf can be realized as the tangent sheaf of a distribution; i.e. it is not clear if for any given $[F]\in\calr_0(0,3,2l)$ there exist monomorphisms $F\into\tp3$ with torsion free cokernel. For the moment, we can show that this is true generically. Indeed, we only need to provide an example to show that an every sheaf in an open subset of $\calr_0(0,3,2l)$ can be realized as tangent sheaves of distributions. 
	
	\begin{example}\label{ex:c2=3 c3=6}
		If $c_3(T_\sD)= 6$, any such distribution must be singular at a curve $C$ of degree $3$ and genus $0$, plus six points. According to \cite[Lemma 1]{RP:MS}, any such curve is a, possibly degenerated, twisted cubic. Then let $C$ be given by 
		\[
		I_C = (-y^2+xz, -yz+xw, -z^2+yw)
		\]
		and add the points $(0:1:0:0)$, $(0:0:1:0)$ and $(1:1:-1:1)$. We can get the $1$-form
		\begin{align*}
			&\quad \omega = (-y^2z+yz^2+xyw-y^2w+yzw-z^2w-xw^2+yw^2)dx\\ &+(xyz+yz^2-x^2w-xzw+z^2w-yw^2)dy \\ &+(-xyz-y^2z+x^2w+xyw+2y^2w-2xzw+4yzw-4xw^2)dz \\ &+ (xy^2-x^2z-xyz-2y^2z+3xz^2-5yz^2+x^2w-xyw+y^2w+4xzw)dw
		\end{align*}
		which is singular at the prescribed scheme and also at the points $(3:0:1:0)$, $(1:-7:5:-7)$ and $(9: -31: 24: -33)$. Therefore $T_\sD = \ker \omega$ has the desired Chern classes. 
		
	\end{example}
	
	\begin{example}\label{ex:c2=3 c3=4}
		For $c_3(T_\sD)= 4$ we must have $C= \sing_1(\sD)$ a degree $3$ curve of genus $-1$. Then $C$ must be extremal, owing to \cite[Theorem 4.1]{DesP}; this means that $C$ must contain a conic, possibly degenerated. So let us fix $C$ a disjoint union of a conic and a line, given by
		\[
		I_C = (x,y) \cap (w, z^2-xy);
		\]
		then we choose the points $(1:1:1:1)$, $(4:1:2:1)$ and $(1:4:2:1)$. We get a $1$-form
		\begin{align*}
			&\quad\omega = (-xy^2+yz^2-45xyw+45y^2w+xzw-82yzw+70xw^2+11yw^2)dx\\&+(x^2y-xz^2+45x^2w-45xyw-134xzw+53yzw+115xw^2-34yw^2)dy\\&+(-x^2w+216xyw-53y^2w-211xw^2+49yw^2)dz\\&+(-70x^2w-126xyw+34y^2w+211xzw-49yzw)dw
		\end{align*}
		that also vanishes at the point $(289: 1225: -595: -3299)$. Therefore, the sheaf $T_\sD = \ker \omega$ has the desired Chern classes.
	\end{example}

	\begin{example}\label{ex:c2=3 c3=2} 
		If $c_3(T_\sD) = 2$ then $C= \sing_1(\sD)$ is a degree $3$ curve of genus $-2$. Due to \cite[Proposition 3.4]{N}, we know that one of the following holds:
		\begin{enumerate}
			\item $C$ is composed by three skew lines, possibly degenerated;
			\item $C$ is the union of a double line of genus $-3$ and a reduced line.
		\end{enumerate}
		Now recall that, by Corollary \ref{cor:2line}, 
		$\sing(\sD)$ cannot contain a double line of genus $-3$; Thus $C$ falls in the first case. So fix $C$ given by 
		\[
		I_C = (x,y) \cap (z,w) \cap (x-z,y-w)
		\]
		and add the point $P= (1:-1:1:1)$. We can compute the $1$-form
		\begin{align*}
			& \quad \omega = (xyz+y^2z+yz^2+xyw-y^2w-2xzw-4yzw+2xw^2+yw^2)dx \\ &+(-x^2z-xyz+xz^2+yz^2-x^2w+xyw+xzw-yzw)dy\\ &+ (-2xyz-y^2z+x^2w+xyw-y^2w+xzw+yw^2)dz\\ &+ (x^2z+2xyz+2y^2z-xz^2-2x^2w-xyw-yzw)dw
		\end{align*}
		that is singular at $C\cup \{P,Q\}$ where $Q =(1:-2:15:5)$. Therefore $T_\sD = \ker \omega$ has the desired Chern classes.
	\end{example}

	Now we will show that the special sheaves in $\calr(0,3,2l) \setminus \calr_0(0,3,2l)$ cannot be realized as tangent sheaves of distributions.
	
	\begin{proposition}\label{prop:special c2=3}
		Fix an integer $l \in \{1,2,3\}$ and let $\sD$ be a distribution such that $[T_\sD] \in \calr(0,3,2l)$ then $h^1(T_\sD(1))=0$.
	\end{proposition}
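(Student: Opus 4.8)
The plan is to reduce the cohomological vanishing to a statement about the singular curve $C:=\sing_1(\sD)$ and then to eliminate the only geometric configurations that could violate it. First I would apply Lemma \ref{lem:h2=0}(iii) with $p=1$ to get $h^1(T_\sD(1))=h^1(\omega_C(1))$, and then invoke Serre duality for the Cohen--Macaulay curve $C$ (the identity $h^i(\omega_C(p))=h^{1-i}(\mathcal{O}_C(-p))$ recorded after Lemma \ref{lem:h2=0}) to rewrite this as
\[
h^1(T_\sD(1))=h^1(\omega_C(1))=h^0(\mathcal{O}_C(-1)).
\]
By \eqref{eq:chern sing1 deg2} the curve $C$ has degree $6-c_2(T_\sD)=3$ and arithmetic genus $p_a(C)=l-3$, so $p_a(C)=-2,-1,0$ according as $l=1,2,3$. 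It therefore suffices to prove that $h^0(\mathcal{O}_C(-1))=0$ for every such curve.

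The key elementary point is that $h^0(\mathcal{O}_C(-1))$ can be nonzero only when $C$ is non-reduced. Indeed, on an integral component $C_i$ of degree $e_i$ the sheaf $\mathcal{O}_C(-1)$ restricts to an invertible sheaf of degree $-e_i<0$, which has no sections; hence any section of $\mathcal{O}_C(-1)$ must be supported on the nilradical of $\mathcal{O}_C$, and it vanishes when $C$ is reduced. Quantitatively, if $C$ contains a double line $2L$ of genus $g$, then from the conormal sequence $0\to\mathcal{L}\to\mathcal{O}_{2L}\to\mathcal{O}_L\to0$ with $\mathcal{L}\simeq\op1(-1-g)$ one computes $h^0(\mathcal{O}_{2L}(-1))=h^0(\mathcal{L}(-1))=\max\{0,-1-g\}$, which is nonzero precisely when $g\le-2$. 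Thus a nonzero section of $\mathcal{O}_C(-1)$ forces $C$ to carry a sufficiently negative multiple-line structure.

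Next I would run through the classification of Cohen--Macaulay curves of degree $3$ with the relevant genus (\cite{N}, together with \cite{NS:deg4}, \cite{RP:MS}), exactly as in Examples \ref{ex:c2=3 c3=6}--\ref{ex:c2=3 c3=2}. For the reduced representatives (twisted cubics, conic-plus-line, three skew lines, and their reduced degenerations) the previous paragraph gives $h^0(\mathcal{O}_C(-1))=0$ at once. For the non-reduced representatives I would argue: a double line of genus $\le-3$ is forbidden as a component of $\sing(\sD)$ by Corollary \ref{cor:2line} (as $d=2$), while a genus-$(-2)$ double line is ruled out by genus--degree bookkeeping, since adjoining a line of degree one produces either total arithmetic genus $-3$ or a union with an embedded point, so it is never a Cohen--Macaulay curve of the required genus $p_a(C)$. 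The triple lines whose type $(a,b)$ satisfies $b\ge2$ (for $a=-1$) or $a+b\ge2$ (for $a\ge0$) are excluded by Corollary \ref{cor:3line}, since $\sing(\sD)$ would then contain a subcurve of degree $\ge4$, contradicting $\deg C=3$.

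The main obstacle is the small number of non-reduced curves that survive both exclusions, namely the triple line of type $(-1,1)$ when $l=3$ and of type $(0,0)$ when $l=1$. For these I would produce an explicit Hilbert--Burch resolution (for type $(-1,1)$ the ideal $(x^2,xy,xz-y^2)$ is resolved by the $3\times2$ matrix with columns $(y,-x,0)$ and $(z,-y,-x)$, verified in \cite{M2}) showing that the curve is arithmetically Cohen--Macaulay; its Hartshorne--Rao module then vanishes, so $h^0(\mathcal{O}_C(-1))=h^1(\mathscr{I}_C(-1))=0$. Assembling the cases yields $h^0(\mathcal{O}_C(-1))=0$ in every instance, and hence $h^1(T_\sD(1))=0$. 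I emphasise that the delicate step throughout is the genus-$(-2)$ boundary: Corollary \ref{cor:2line} forbids only genus strictly below $-2$, and the Rao-module bound $h^1(\mathscr{I}_C(-1))\le h^2(T_\sD(-5))$ is circular here (reflexive Serre duality gives $h^2(T_\sD(-5))=h^1(T_\sD(1))$), so the borderline configurations must be dispatched by the Cohen--Macaulay/genus arithmetic and by the explicit ACM resolutions rather than by any cohomological inequality.
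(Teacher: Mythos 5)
Your reduction to $h^0(\mathcal{O}_C(-1))=0$, the treatment of reduced curves, and the exclusions via Corollary \ref{cor:2line} (genus $\le -3$ double lines) and Corollary \ref{cor:3line} (triple lines of type $(-1,b)$ with $b\ge 2$ or $(a,b)$ with $a\ge0$, $a+b\ge2$) are correct, and your route is more self-contained than the paper's, which for each $l$ simply quotes Nollet's classification of $H_{3,g}$ together with the cohomology of the curves occurring there. But your dismissal of the genus $-2$ boundary --- the very step you flag as delicate --- is false, and this is a genuine gap. Take $I_{2L}=(x^2,xy,y^2,xz^2+yw^2)$, a double line of genus $-2$ on $L=V(x,y)$, and $L'=V(y,w)$, meeting $L$ at the single point $P=(0:0:1:0)$. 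Then $I_{2L}+I_{L'}=(y,w,x^2,xz^2)$ cuts out the \emph{reduced} point $P$, so Mayer--Vietoris gives $\chi(\mathcal{O}_{2L\cup L'})=3+1-1=3$, i.e.\ $p_a=-2$; and $C=2L\cup L'$ is Cohen--Macaulay, since $\mathcal{O}_C$ embeds in $\mathcal{O}_{2L}\oplus\mathcal{O}_{L'}$ and hence has no embedded associated primes. So a genus $-2$ double line plus an incident line \emph{is} a CM curve of degree $3$ and genus $-2$, exactly the genus required when $l=1$, contradicting your ``genus $-3$ or embedded point'' bookkeeping. Worse, for this curve the nilpotent ideal of $\mathcal{O}_C$ is \emph{all} of $\mathcal{I}_L/\mathcal{I}_{2L}\simeq\op1(1)$ (at $P$ the coefficient $p=z^2$ is a unit, so $\bar x\equiv -(q/p)\bar y$ and the image of $\mathcal{I}_{C_{red}}$ already generates the stalk), whence $h^0(\mathcal{O}_C(-1))=1\neq0$. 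This is precisely the configuration that would falsify the proposition, so it cannot be waved away; it must be shown never to occur as $\sing_1(\sD)$. One argument within your own toolkit: the cubic generator $xz^2+yw^2$ contributes nothing to $I_C=I_{2L}\cap I_{L'}$ in degrees $\le 3$, and one computes $(I_C)_{\le 3}=(x,y)^2\cap(y,w)$, whose zero scheme is a curve of degree $4$; Lemma \ref{lem:incsat} then forces $\sing(\sD)$ to contain a degree $4$ curve, contradicting $\deg\sing_1(\sD)=3$. Without this step (or the paper's appeal to \cite{N} for the $l=1$ case), your proof has a hole exactly where the statement is most at risk.

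A second, repairable, error: the type $(0,0)$ triple line is \emph{not} ACM, so the promised Hilbert--Burch resolution does not exist. Indeed any connected curve with $p_a=-2$ satisfies $h^1(\mathscr{I}_C)=h^0(\mathcal{O}_C)-1\ge\chi(\mathcal{O}_C)-1=2$. Concretely, for $(a,b)=(0,0)$ the ideal reduces to $(x,y)^3+(Q)$ for a quadric $Q\in(x,y)$, i.e.\ $C$ is the divisor $3L$ on the quadric surface $V(Q)$; only the type $(-1,1)$ triple line (the flat degeneration of the twisted cubic) is ACM, and there your Hilbert--Burch matrix is correct. The vanishing you actually need still holds for type $(0,0)$: in the filtration $L\subset 2L\subset C$ the graded pieces of $\mathcal{O}_C$ are $\mathcal{O}_L$, $\op1$, $\op1$, so every graded piece of $\mathcal{O}_C(-1)$ is $\op1(-1)$ and $h^0(\mathcal{O}_C(-1))=0$. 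So this slip only needs a different justification, whereas the genus $-2$ double-line-plus-line configuration above requires a genuinely new argument.
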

	
	\begin{proof}
		Recall that Lemma \ref{lem:h2=0} implies $h^1(T_\sD(1)) = h^1(\omega_C(1))= h^1(\mathscr{I}_{C}(-1))$, where $C = {\sing_1(\sD)}$, for any distribution of degree $2$. Then we only need to show that one of these other cohomologies vanish. 
		
		If $l=3$ then $C$ is a, possibly degenerated, twisted cubic. In particular, $C$ is ACM hence $h^1(\mathscr{I}_{C}(-1))=0$.

		If $l=2$ then $C$ is extremal. As $\deg C = 3$ and $p_a(C) = -1$, we have that $h^1(\mathscr{I}_{C}(l))=0$ for $l\leq -1$, see \cite{DesP}.
		
		If $l=1$, we know from Example \ref{ex:c2=3 c3=2} that $C$ is the disjoint union of three lines, possibly degenerated. From the proof of \cite[Proposition 3.4]{N} we have that $h^1(\mathscr{I}_{C}(-1))=0$.
	\end{proof}

	\begin{corollary}\label{cor:special c2=3}
		Let $l \in \{1,2,3\}$. If $[F] \in \calr(0,3,2l) \setminus \calr_0(0,3,2l)$ then there does not exist distribution with $F$ as the tangent sheaf.
	\end{corollary}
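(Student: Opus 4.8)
The plan is to obtain this corollary as a direct contrapositive of Proposition \ref{prop:special c2=3}, with essentially no additional argument. The key point is that the complement $\calr(0,3,2l)\setminus\calr_0(0,3,2l)$ is, by the very definition of the open stratum $\calr_0(0,3,2l)=\{[F]\mid h^1(F(1))=0\}$, precisely the locus of stable sheaves $F$ with the prescribed Chern classes for which $h^1(F(1))\neq0$, whereas Proposition \ref{prop:special c2=3} asserts that the tangent sheaf of any distribution lying in $\calr(0,3,2l)$ satisfies the opposite vanishing $h^1(T_\sD(1))=0$.

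First I would argue by contradiction: suppose there is a codimension one distribution $\sD$ of degree $2$ with $T_\sD=F$ for some $[F]\in\calr(0,3,2l)\setminus\calr_0(0,3,2l)$. Then $[T_\sD]=[F]\in\calr(0,3,2l)$, so Proposition \ref{prop:special c2=3} applies and forces $h^1(T_\sD(1))=0$, i.e. $h^1(F(1))=0$. This contradicts the membership $[F]\notin\calr_0(0,3,2l)$, which unwinds to $h^1(F(1))\neq0$. Hence no such distribution $\sD$ exists, as claimed.

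I expect no genuine obstacle at this step: once Proposition \ref{prop:special c2=3} is available, the corollary is a formal consequence that merely juxtaposes the cohomological constraint imposed on tangent sheaves of degree $2$ distributions with the defining condition of the open stratum $\calr_0$. All the substantive work sits in Proposition \ref{prop:special c2=3} itself, whose proof relies on the identification $h^1(T_\sD(1))=h^1(\mathscr{I}_C(-1))$ furnished by Lemma \ref{lem:h2=0} together with the explicit description of the singular curve $C=\sing_1(\sD)$ in each of the three cases $l=1,2,3$.
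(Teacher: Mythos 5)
Your proposal is correct and is exactly the argument the paper intends: the corollary is stated without proof precisely because it is the formal contrapositive of Proposition \ref{prop:special c2=3}, combined with the defining condition $h^1(F(1))=0$ of the open subset $\calr_0(0,3,2l)$. Your only (correct) implicit addition is noting that any distribution whose tangent sheaf has $c_1=0$ automatically has degree $2$, so the proposition applies.
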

	
	Despite that we do not have distributions, many of these special sheaves admit monomorphisms to $\tp3$. For instance, using \cite[Table 3.5.1]{Ch} we see that for any $[F]\in\calr(0,3,6) \setminus \calr_0(0,3,6)$ we get $h^0(F(1)) = 3$ hence, due to Lemma \ref{lem:injective maps}, there exists a monomorphism $F\hookrightarrow \tp3$. Therefore the cokernel of any such monomorphism has torsion. 
	
	Finally, we note that since  $[T_\sD] \in \calr_0(0,3,2l)$, the spectra are completely determined. It is $\{-1,0,0\}$ for $l=1$;  $\{-1,-1,0\}$ for $l=2$; and $\{-1,-1,-1\}$ for $l= 3$, see \cite[Tables 3.7.1, 3.4.1 and 3.5.1]{Ch}.
	
	%---------------------------------------------------------------
	
	\subsection{Case \texorpdfstring{$ c_3(T_\sD) =0$}{c3 = 0}}\label{sec:3-0}
	
	Let us finally consider codimension one distributions such that $T_\sD$ is a locally free sheaf with $c_2(T_\sD)=3$; hence $C = \sing(\sD)$ is a curve of degree 3 and genus $-3$. From \cite[Proposition 3.5]{N} we know that either
	\begin{enumerate}
		\item $C$ is the union of a double line of genus $-4$ and a reduced line;
		\item $C$ is a disjoint union of a double line of genus $-2$ with another line.
	\end{enumerate}
	Corollary \ref{cor:2line} implies that the first case cannot occur, so $C$ belongs to the second one. Owing to \cite[Proposition 3.3]{N}, in fact to its proof, any such curve $C$ admits a free resolution
	\[
	0 \longrightarrow \op3(-5)^{\oplus3} \longrightarrow \op3(-4)^{\oplus9} \longrightarrow \op3(-3)^{\oplus7} \longrightarrow \mathscr{I}_C \longrightarrow 0.
	\]
	and satisfies $h^2(\mathscr{I}_C(l)) = 2h^1(\mathbb{P}^1, \op1(l)) + h^1(\mathbb{P}^1, \op1(l+1))$. With this information one can compute $h^i(\mathscr{I}_C(l))$ for every $i$ and $l$. Therefore, using Lemma \ref{lem:h2=0} and the exact sequence
	\[
	0 \longrightarrow T_\sD \longrightarrow \tp3 \longrightarrow \mathscr{I}_C(4) \longrightarrow0
	\]
	we can prescribe the cohomology for $T_\sD$ as in Table \ref{tab:cohomology c2=3 c3=0}. Note that the complement of this table can be computed via Serre duality. In particular, $T_\sD$ is described by the next result.
	
	\begin{table}[ht]
		\centering
		\def\arraystretch{1.5}
		\begin{tabular}{|c|c|c|c|c|c|} \hline
			& $-2$ & $-1$ & $0$ & $1$ & $l\geq 2$ \\ \hline \hline
			$h^0(T_\sD(l))$ & $0$ & $0$ & $0$ & $0$ & $\frac{1}{3}l^3 + 2l^2 + \frac{2}{3}l -4$ \\ \hline
			$h^1(T_\sD(l))$ & $0$ & $3$ & $4$ & $1$ & $0$ \\ \hline
			$h^2(T_\sD(l))$ & $0$ & $0$ & $0$ & $0$ & $0$ \\ \hline
			$h^3(T_\sD(l))$ & $0$ & $0$ & $0$ & $0$ & $0$ \\ \hline
		\end{tabular}
		\caption{Cohomology table for $T_\sD$ with $c_2(T_\sD) = 3$ and $c_3(T_\sD) = 0$. }
		\label{tab:cohomology c2=3 c3=0}
	\end{table}
	
	\begin{theorem}\label{thm:instanton3}
		Let $E$ be stable rank two vector bundle on $\mathbb{P}^3$ with $c_1(E) = 0$ and $c_2(E) = 3$. Then $E$ is the tangent sheaf of a distribution if and only if it is a generic instanton bundle $E$ of charge $3$ with a unique jumping line of order 3. 
	\end{theorem}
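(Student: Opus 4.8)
The plan is to reformulate the statement sheaf-theoretically. Since $E$ is locally free with $c_1(E)=0$ we have $E\simeq E^\vee$, so by \eqref{seq:dist-p3} the bundle $E$ is the tangent sheaf of a degree $2$ distribution if and only if there is a monomorphism $\phi\colon E\hookrightarrow\tp3$ whose cokernel is torsion free (necessarily $\coker\phi\simeq\mathscr{I}_Z(4)$). Note that Lemma \ref{lem:bertini} is useless here: the cohomology forces $h^0(E(1))=0$, so $E(1)$ is not globally generated, and the whole argument must instead run through Lemmas \ref{lem:injective maps} and \ref{lem:hom saturation}. I will repeatedly use that, once the spectrum of such an $E$ is known to be $\{0,0,0\}$, its full cohomology is determined: the spectrum gives $h^1(E(-1))=3$ and $h^1(E(p))=0$ for $p\le-2$, self-duality gives $h^2(E(p))=h^1(E(-p-4))$, and Riemann--Roch then yields $h^1(E)=4$ and $h^1(E(1))=1$, which are exactly the values recorded in Table \ref{tab:cohomology c2=3 c3=0}.

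For the forward implication, suppose $E=T_\sD$. As $E$ is a bundle, $c_3(E)=0$, and \eqref{eq:chern sing1 deg2} gives $\deg C=3$, $p_a(C)=-3$ for $C=\sing(\sD)$; the analysis of Section \ref{sec:3-0} (via Corollary \ref{cor:2line} and \cite{N}) shows $C$ is a disjoint union of a line and a double line of genus $-2$. Reading Table \ref{tab:cohomology c2=3 c3=0} we get $h^1(E(-2))=0$, so $E$ is a charge $3$ instanton bundle. For the spectrum, Lemma \ref{lem:bound spectrum} shows $1$ does not occur (the \hyperref[maintheorem]{Main Theorem} asserts $\sing(\sD)$ lies on no quadric), while the symmetry property in Remark \ref{rem:properties of spectrum} applies because $E$ is locally free; since the spectrum consists of three integers summing to $0$, all $\ge-3$ and none equal to $\pm1$, the only possibility is $\{0,0,0\}$. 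Finally I invoke the dictionary between the spectrum and jumping lines for instantons: spectrum $\{0,0,0\}$ is equivalent to the presence of a jumping line of the maximal order $3$, which for a generic such bundle is unique. This yields the stated description.

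For the converse, let $E$ be a generic charge $3$ instanton with a unique order $3$ jumping line, so its spectrum is $\{0,0,0\}$ and its cohomology is as above. Applying $\Hom(E,-)$ to the Euler sequence and using $h^0(E)=h^0(E(1))=0$ and $h^2(E)=0$ produces an exact sequence $0\to\Hom(E,\tp3)\to H^1(E)\xrightarrow{\mu}H^1(E(1))^{\oplus4}$, so that $\hom(E,\tp3)=\dim\ker\mu=4-\rk\mu$, where $\mu$ is the degree-raising multiplication of the finite-length module $H^1_\ast(E)$, concentrated in degrees $-1,0,1$ with dimensions $3,4,1$. The order $3$ jumping line is precisely what forces a nontrivial relation among the four maps $x_i\colon H^1(E)\to H^1(E(1))$, so $\rk\mu\le3$ and $\hom(E,\tp3)\ge1$; moreover, since $h^0(E(1))=0$, the last part of Lemma \ref{lem:injective maps} guarantees that every nonzero element of $\Hom(E,\tp3)$ is injective, and I fix such a $\phi\colon E\hookrightarrow\tp3$.

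It remains to show $\phi$ has torsion-free cokernel for generic $E$, which is the main obstacle. If not, let $\sD'$ be the saturation of $\phi$ as in \eqref{saturation}; by Lemma \ref{lem:hom saturation} its degree is $d'\in\{0,1\}$, the sheaf $P$ of \eqref{sheaf p} is pure of dimension $2$ with $c_1(P)=2-d'$, and $\hom(T_{\sD'},\tp3)\le\hom(E,\tp3)$. Comparing with Table \ref{tab:hom table deg <2}, the only admissible targets are $(c_2,c_3)(T_{\sD'})=(3,5)$ for $d'=1$ and $(2,0)$ for $d'=0$. The case $d'=1$ is excluded on Chern classes: from $0\to E\to T_{\sD'}\to P\to0$ one computes $c_1(P)=1$, $\ch_2(P)=\tfrac12$, $\ch_3(P)=\tfrac76$, and since $P$ is pure of dimension two supported on a plane $H$ its reflexive hull is $\calo_H(a)$ with $a$ forced by $\ch_2$ to equal $1$, whence $\ch_3(P)\le\ch_3(\mathcal{O}_H(1))=\tfrac16<\tfrac76$, a contradiction. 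The case $d'=0$ cannot be ruled out by Chern classes alone (there $P$ is supported on a quadric); instead I will show that a generic $E$ admits no nonzero map to the unique degree $0$ tangent sheaf with $(c_2,c_3)=(2,0)$, so that the required factorization $E\to T_{\sD'}\hookrightarrow\tp3$ cannot exist — equivalently, an order $3$ jumping line cannot be produced by an elementary modification of that bundle along a quadric. This is the delicate point; granting it, $\coker\phi$ is torsion free and $E=T_\sD$. The same analysis, carried out in families, then computes the dimension of the locus of such distributions and locates it inside the instanton moduli space, as recorded in Proposition \ref{prop:moduli30}.
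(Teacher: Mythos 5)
Your forward implication contains a genuine gap: it rests on the claim that spectrum $\{0,0,0\}$ is \emph{equivalent} to the presence of an order~$3$ jumping line, and this is false. By your own computation (the instanton condition $h^1(E(-2))=0$ excludes positive elements, local freeness gives symmetry, and the entries sum to $0$), \emph{every} charge $3$ instanton bundle has spectrum $\{0,0,0\}$; yet the bundles with an order $3$ jumping line form the $20$-dimensional family $\cali^1(3)$ inside the $21$-dimensional instanton moduli space, so the generic charge $3$ instanton has none. The spectrum only records the dimensions $h^1(E(p))$, while the order $3$ jumping line is detected by the \emph{module structure} of $H^1_\ast(E)$: by \cite[Lemme 1.1]{GS}, either $E$ has a unique order $3$ jumping line, or the multiplication pairing $H^1(E)\otimes H^0(\op3(1))\to H^1(E(1))$ is nondegenerate. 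The paper closes this step using the hypothesis you never exploit in this direction, namely $E=T_\sD$, which gives $\Hom(E,\tp3)\neq 0$; via the Euler sequence this says exactly that $H^1(E)\to H^1(E(1))\otimes H^0(\op3(1))^\vee$ has nontrivial kernel, so the pairing is degenerate and the jumping line must exist. (Ironically, you invoke precisely this mechanism, in the opposite direction, in your converse.) A smaller, fixable point: quoting the \hyperref[maintheorem]{Main Theorem} to exclude $1$ from the spectrum is circular, since that assertion of the Main Theorem is itself deduced from the case-by-case spectra computations; it is harmless only because $h^1(E(-2))=0$ already excludes $1$.

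In the converse, your exclusion of the saturation with $\deg(\sD')=1$ by Chern characters ($a=1$ forced by $\ch_2(P)=\tfrac12$, whence $\ch_3(P)\le\tfrac16<\tfrac76$) is correct and is a genuinely different, arguably cleaner, route than the paper's, which instead shows via $\inext^1(P,\op3)\onto\inext^1(T_\sF,\op3)$ that the length $5$ non-planar scheme $\sing(\sF)$ would lie in the plane $\supp P$. But the case $\deg(\sD')=0$, where $T_{\sD'}=N(1)$ for a null correlation bundle $N$, is not proven: you state the needed claim and write ``granting it''. This case is the technical heart of the paper's proof and cannot be waved away. The paper handles it by taking $E$ generic in $\cali^1(3)$, i.e.\ corresponding to a smooth rational quintic $\Gamma\subset\check{\mathbb{P}}^3$ not on a quadric, identifying the curve $T\subset\mathbb{G}(1,3)$ of jumping lines of order $\geq 2$ with the trisecants of $\Gamma$ \cite[3.4.2]{GS}, verifying $T\not\subset H\cup X$ (where $H$ cuts out the jumping lines of $N$ and $X$ is the Fano variety of lines in $\supp P$), and restricting $0\to E\to N(1)\to P\to 0$ to a line $[L]\in T\setminus(H\cup X)$, where it reads $0\to\calo_L(2)\oplus\calo_L(-2)\to\calo_L(1)^{\oplus 2}\to P|_L\to 0$, an absurdity since $\Hom(\calo_L(2),\calo_L(1))=0$. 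Note also that this is exactly where the word ``generic'' in the statement earns its keep: torsion-freeness of $\coker\phi$ is only established on the open locus of smooth quintics, so any completed proof must make that restriction explicit.
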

	
	Recall that a line $Y\subset\p3$ is said to be a jumping line of order $k \geq 1$ for $E$ if $E|_Y=\mathcal{O}_Y(-k)\oplus\mathcal{O}_Y(k)$. According to \cite[Section 1]{GS}, generic instanton bundles of charge 3 admit at most one jumping line of order 3.
	
	\begin{proof} 
		First suppose that there exists $\sD$ such that $E = T_\sD$. From Table \ref{tab:cohomology c2=3 c3=0} we have that $E$ is an instanton bundle with natural cohomology, i.e., for every $l$ at most one $h^1(E(l))$ does not vanish, see \cite{HH}. 
		Due to \cite[Lemme 1.1]{GS}, either $E$ has a unique jumping line of order three or the map $\xi\colon H^1(E) \otimes H^0(\op3(1)) \rightarrow H^1(E(1))$, given by multiplication, is a nondegenerate pairing. We will prove that the later cannot occur.
		
		Tensorizing the Euler sequence with $E$ we get
		\[
		0 \longrightarrow H^0(E\otimes \tp3) \longrightarrow H^1(E) \stackrel{\zeta}{\longrightarrow} H^1(E(1)) \otimes H^0(\op3(1))^\vee \longrightarrow H^1(E\otimes \tp3) 
		\]
		and it follows that $\zeta$ is an isomorphism if and only if $\xi$ is nondegenerate. By hypothesis, $h^0(E\otimes \tp3) = \hom(E,\tp3) \neq 0$ and $\zeta$ cannot be an isomorphism, whence $E$ has a unique jumping line of order $3$. 
		
		Now we prove the converse. Suppose that $E$ is a generic instanton bundle of charge $3$, its cohomology being given by Table \ref{tab:cohomology c2=3 c3=0}, and suppose that $E$ has a unique jumping line of order $3$. From our previous argument, there exists $\phi \colon E \rightarrow \tp3$ which is injective, due to Lemma \ref{lem:injective maps}. In fact, owing to the proof of \cite[Lemme 1.1]{GS}, the map $\zeta$ has rank two, hence $\hom(E,\tp3) = 2$. To conclude we need to prove that $\coker \phi$ is a torsion free sheaf.
		
		Suppose, by contradiction, that $\coker \phi$ has torsion and let $\sF$ be the saturated distribution; then
		\begin{equation}\label{seq:sat c2=3}
			0 \longrightarrow E \stackrel{\beta}{\longrightarrow} T_\sF \longrightarrow P \longrightarrow 0.
		\end{equation}
		According to Lemma \ref{lem:hom saturation}, $P$ is a pure sheaf of dimension two, $\deg(\sF) \leq 1$ and $\hom(T_\sF, \tp3) \leq \hom(E,\tp3) = 2$. From Table \ref{tab:hom table deg <2} we see that $\sF$ must fall in one of two cases that we will analyze now.
		
		First, $\deg(\sF) = 1$ and $c_2(T_\sF) = 3$ and $c_3(T_\sF) = 5$. In this case, $\sing(\sF)$ is a non-planar zero-dimensional subscheme of length $5$; and $P$ is supported on a (reduced) plane. Dualizing the sequence \eqref{seq:sat c2=3} we get
		\[
		\inext^1(P,\op3) \longrightarrow \inext^1(T_\sF, \op3) \longrightarrow 0.
		\]
		Thus, $\sing(\sF)$ must be contained in $\supp(P)$, which is absurd.
		
		The second case is: $\deg(\sF) = 0$ and $c_2(T_\sF) = 2$ and $c_3(T_\sF) = 0$. In this case, $T_\sF = N(1)$, where $N$ is a null correlation bundle.
		
		Consider $\mathbb{G}(1,3)\subset \mathbb{P}^5$ the Grassmannian of lines in $\mathbb{P}^3$ and let $T\subset \mathbb{G}(1,3)$ be the curve of jumping lines of order at least $2$ for $E$. Let $H \subset \mathbb{P}^5$ a hyperplane such that $H\cap \mathbb{G}(1,3)$ is the set of jumping lines for $N$. Also consider $X \subset \mathbb{G}(1,3)$ the Fano variety of lines in $\supp P$.
		
		Owing to \cite[2.2]{GS}, the moduli space of instanton bundles of charge $3$ with a jumping line of order $3$ is birational to the Hilbert scheme of rational quintic curves in $\check{\mathbb{P}}^3$. Assume that $E$ is general so that it corresponds to a smooth rational quintic $\Gamma \subset \check{\mathbb{P}}^3$ not contained in a quadric. Under this assumption $\Gamma$ has a unique $4$-secant line which is precisely $\check{L}$, for $L$ the jumping line of order $3$ of $E$. According to \cite[3.4.2]{GS}, $T$ parameterizes the trisecant lines for $\Gamma$ hence $T$ is an integral curve not contained in $X$. Also $T$ is the intersection of $7$ quadrics hence it cannot be contained in $H$. We conclude that $T \setminus (H\cup X) \neq \emptyset$, which we will use this to get a contradiction. 
		
		Let $L$ be a line in $\p3$ corresponding to $[L] \in T \setminus (H\cup X)$; as $L\not \subset \supp P$ we have that ${\rm Tor}_1(P, \mathcal{O}_L) = 0$. 
		Restricting \eqref{seq:sat c2=3} to $L$ we get
		\[
		0 \longrightarrow \mathcal{O}_L(2) \oplus \mathcal{O}_L(-2) \longrightarrow \mathcal{O}_L(1)^{\oplus2} \longrightarrow P|_L \longrightarrow 0,
		\]
		which is an absurd, since $\Hom(\mathcal{O}_L(2), \mathcal{O}_L(1)) = 0$.
	\end{proof}
	
	\begin{remark}
		We complete this section by observing that for distributions $\sD$ as above, the two lines of $C_{red} = \sing_1(\sD)_{red}$ are jumping lines for $T_\sD$: the double line of $C$ is supported on the unique jumping line of order $3$ for $T_\sD$, while the reduced line of $C$ is a jumping line of order $2$. Indeed, let $L_1$ denote the support of the double line of $C$ and $L_2$ its the reduced line; dualizing the sequences 
		\begin{gather*}
			0 \longrightarrow \mathcal{O}_{L_1}(1) \oplus \mathcal{O}_{L_2} \longrightarrow \mathcal{O}_{C} \longrightarrow \mathcal{O}_{L_1} \longrightarrow 0, \\
			0 \longrightarrow T_\sD \longrightarrow \tp3 \longrightarrow \mathscr{I}_C(4) \longrightarrow 0,
		\end{gather*}
		we obtain the composition of epimorphisms 
		$$ T_\sD \relbar\joinrel\twoheadrightarrow \omega_C \relbar\joinrel\twoheadrightarrow \mathcal{O}_{L_1}(-3) \oplus \mathcal{O}_{L_2}(-2). $$
		which can only exist if $L_1$ and $L_2$ are jumping lines for $T_\sD$ of orders $3$ and $2$, respectively.
	\end{remark}
	
	Recall that isomorphism classes of codimension one distributions $\sD$ of degree $d$ on $\p3$ whose tangent sheaves have Chern classes $c_2(T_\sD) = c_2$ and $c_3(T_\sD))=c_3$ are parameterized by a quasi-projective variety denoted by $\mathcal{D}(d,c_2,c_3)$, see \cite{CCJ1}. Some of these moduli spaces have been explicitly described in \cite[Section 11]{CCJ1} and \cite{CJMdeg1}, using different techniques. The results obtained in this section provide us with a detailed description of $\mathcal{D}(2,3,0)$.
	
	\begin{proposition}\label{prop:moduli30}
		The moduli space $\mathcal{D}(2,3,0)$ of codimension one distributions $\sD$ of degree 2 such that $c_2(T_\sD)=3$ and $c_3(T_\sD)=0$ is irreducible of dimension 21.
	\end{proposition}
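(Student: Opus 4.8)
The plan is to exhibit $\mathcal{D}(2,3,0)$ as (a dense open subset of) a $\mathbb{P}^1$-bundle over the moduli space of the relevant instanton bundles, and to read off both irreducibility and the dimension from this fibration. First I would invoke Theorem \ref{thm:instanton3}: a bundle $E$ occurs as a tangent sheaf $T_\sD$ with these invariants precisely when $E$ is a generic instanton bundle of charge $3$ carrying a unique jumping line of order $3$. Write $\mathcal{M}$ for the parameter space of such bundles. The forgetful assignment $[\sD]\mapsto[T_\sD]$ defines a morphism $\pi\colon\mathcal{D}(2,3,0)\to\mathcal{M}$, and the proof reduces to analysing its fibers together with $\dim\mathcal{M}$.

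For the fibers, fix $[E]\in\mathcal{M}$. A distribution $\sD$ with $T_\sD\cong E$ is the same datum as the image of a monomorphism $\phi\colon E\into\tp3$ with torsion free cokernel, taken up to $\Aut(E)=\mathbb{C}^\ast$ (here I use that $E$ is stable, so two such $\phi$ share an image exactly when they agree in $\mathbb{P}\Hom(E,\tp3)$). By Table \ref{tab:cohomology c2=3 c3=0} we have $h^0(E(1))=0$, so Lemma \ref{lem:injective maps} forces every nonzero element of $\Hom(E,\tp3)$ to be injective; moreover, the contradiction argument in the proof of Theorem \ref{thm:instanton3} applies verbatim to any such $\phi$, so for generic $E$ the cokernel is automatically torsion free. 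Since that same proof yields $\hom(E,\tp3)=2$, the fiber $\pi^{-1}([E])$ is identified with the full line $\mathbb{P}\Hom(E,\tp3)\cong\mathbb{P}^1$.

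Next I would compute $\dim\mathcal{M}=20$. By \cite[2.2]{GS}, $\mathcal{M}$ is birational to the space of rational quintic curves in $\check{\mathbb{P}}^3$; parametrizing such a curve by a degree $5$ morphism $\mathbb{P}^1\to\check{\mathbb{P}}^3$ gives $4\cdot 6-1-3=20$ moduli, consistent with the jumping line of order $3$ cutting out a codimension one condition inside the $(8\cdot 3-3)$-dimensional family of charge $3$ instantons. Combining with the fiber computation, $\dim\mathcal{D}(2,3,0)=20+1=21$.

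Finally, to upgrade the dimension count to irreducibility, I would promote the fiberwise picture to a relative one. Over the open locus $\mathcal{M}^\circ\subset\mathcal{M}$ where $\hom(E,\tp3)=2$ is constant, a (quasi-)universal family $\mathcal{E}$ of instantons lets one form, via cohomology and base change, a rank $2$ vector bundle $\mathcal{H}$ on $\mathcal{M}^\circ$ with fiber $\Hom(E,\tp3)$ over $[E]$; its projectivization $\mathbb{P}(\mathcal{H})\to\mathcal{M}^\circ$ is then an irreducible $\mathbb{P}^1$-bundle of dimension $21$. The locus in $\mathbb{P}(\mathcal{H})$ where the tautological morphism $\mathcal{E}\to\tp3$ has torsion free cokernel is open and, by the fiber analysis, dense; it maps bijectively onto a dense open subset of $\mathcal{D}(2,3,0)$. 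Hence $\mathcal{D}(2,3,0)$ is irreducible of dimension $21$. The step I expect to be the main obstacle is precisely this globalization: arranging an honest (quasi-)universal family over $\mathcal{M}^\circ$, checking that the resulting map to the moduli space of distributions is a morphism inducing an isomorphism onto its image, and confirming that the non-generic locus (where $\hom(E,\tp3)$ jumps, or the cokernel acquires torsion) contributes nothing of dimension $\ge 21$, so that the clean $\mathbb{P}^1$-bundle description genuinely transfers to all of $\mathcal{D}(2,3,0)$.
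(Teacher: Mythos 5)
Your proposal follows essentially the same route as the paper's own proof: the forgetful morphism $[\sD]\mapsto[T_\sD]$ onto the $20$-dimensional irreducible family of $3$-instantons with a unique jumping line of order $3$ (via Theorem \ref{thm:instanton3} and \cite{GS}), with fibres identified with (open subsets of) $\mathbb{P}\Hom(E,\tp3)\simeq\p1$ because $\hom(E,\tp3)=2$, yielding $20+1=21$. The only divergence is in the final bookkeeping: where you build a relative $\mathbb{P}^1$-bundle by cohomology and base change, the paper concludes directly from the theorem on the dimension of fibres, using that $\hom(E,\tp3)=2$ holds on all of the base so every fibre of the forgetful map is $1$-dimensional — exactly the input your globalization step would also rest on.
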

	\begin{proof}
		Let $\calb(3)$ denote the moduli space of stable rank 2 locally free sheaves $E$ with $c_1(E)=0$ and $c_2(E)=3$. If $[\sD]\in\mathcal{D}(2,3,0)$, then Theorem \ref{thm:stability} implies that $[T_\sD]\in\calb(3)$, thus \cite[Lemma 2.5]{CCJ1} yields a forgetful morphism $\varpi\colon \mathcal{D}(2,3,0)\to\calb(3)$.
		
		Let $\cali^1(3)$ denote the space of instanton bundles with a unique jumping line of order 3, which was shown to be an irreducible quasi-projective variety of dimension 20 in \cite[1.4]{GS}, and let $\calj$ denote the open subset of $\cali^1(3)$ consisting of instanton bundles corresponding to a smooth rational quintic as in \cite[2.2]{GS}. Following the proof of Theorem \ref{thm:instanton3}, we note that $\calj\subseteq\im(\varpi)\subseteq\cali^1(3)$.
		
		Furthermore, the fibre of $\varpi$ over $[E]\in\im(\varpi)$ is precisely the set of monomorphisms $\phi\colon E\to\tp3$ whose cokernel is torsion free, so it is an open subset of $\mathbb{P}\Hom(E,\tp3)$. However, as we saw in the proof of Theorem \ref{thm:instanton3}, if $[E]\in\calj$, then every $\phi\in\Hom(E,\tp3)$ is injective and has torsion free cokernel, thus in fact $\varpi^{-1}([E])=\mathbb{P}\Hom(E,\tp3)\simeq\p1$ whenever $[E]\in\calj$. Since $\hom(E,\tp3) = 2$ for every $[E] \in \cali^1(3)$ it follows that $\dim\varpi^{-1}([E])=1$ for every $E\in\im(\varpi)$. The conclusion is then an immediate consequence of the Theorem on the Dimension of Fibres.
	\end{proof}

	%%%%%%%%%%%%%%%%%%%%%%%%%%%%%%%%%%%%%%%%%%%%%%
	%%%%%%%%%%%%%%%%%%%%%%%%%%%%%%%%%%%%%%%%%%%%%%
	
	\section{Distributions with \texorpdfstring{$c_2(T_\sD)=4 $}{c2 =4}}\label{sect:c2=4}
	
	Let $\sD$ be a degree two distribution such that $c_2(T_\sD)=4$. Then Theorem \ref{thm:stability} implies that $T_\sD$ is stable and, due to \eqref{eq:bound c3}, $c_3(T_\sD) \in \{0,2,4,6,8,10,12,14\}$. In fact, we can prove that $c_3(T_\sD) \in \{6,8,10\}$. 
	
	First note that $C = \sing_1(\sD)$ is a curve of degree $2$ and, in particular, $p_a(C) \leq 0$, see \cite[Corollary 1.6]{N}. Thus \eqref{eq:chern sing1 deg2} implies that
	\[
	c_3(T_\sD) = 10 + 2p_a(C) \leq 10.
	\]
	On the other hand, \cite[Corollary 1.6]{N} also says that any degree two curve of genus less than $-1$ is a double line. Due to Corollary \ref{cor:2line}, we must have $p_a(C) \geq -2$, whence
	\[
	c_3(T_\sD) \geq 6.
	\]
	
	Before we show the existence of such distributions, we make a few remarks on the cohomology of the possible tangent sheaves. \begin{lemma}\label{lem:h0(T_D(1))=0,c2=4}
		If $T_\sD$ is the tangent sheaf of a distribution $\sD$ of degree 2 such that $c_2(T_\sD)= 4$ and $c_3(T_\sD))=2l$, then
		\[
		h^0(T_\sD(1)) = \begin{cases}
			0 , & l =3, 4\\
			1, & l=5
		\end{cases}.
		\]
	\end{lemma}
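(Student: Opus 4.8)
The plan is to convert $h^0(T_\sD(1))$ into a cohomological invariant of the singular curve $C:=\sing_1(\sD)$ and then evaluate it from the degree and arithmetic genus of $C$. By the first part of Lemma~\ref{lem:h2=0} we have $h^0(T_\sD(1))=h^0(\omega_C(1))$; since $C$ is Cohen--Macaulay, the Serre duality stated after Lemma~\ref{lem:h2=0} turns this into $h^0(\omega_C(1))=h^1(\mathcal{O}_C(-1))$, so the whole computation reduces to determining $h^1(\mathcal{O}_C(-1))$. The numerical input comes from \eqref{eq:chern sing1 deg2}: the hypothesis $c_2(T_\sD)=4$ forces $\deg(C)=2$, while $c_3(T_\sD)=2l$ forces $p_a(C)=l-5$, so that $l\in\{3,4,5\}$ corresponds exactly to $p_a(C)\in\{-2,-1,0\}$.

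Next I would feed this into Riemann--Roch on the degree $2$ curve $C$, giving $\chi(\mathcal{O}_C(-1))=2\cdot(-1)+1-p_a(C)=-1-p_a(C)$ and hence
\[
h^1(\mathcal{O}_C(-1)) = h^0(\mathcal{O}_C(-1)) + 1 + p_a(C);
\]
everything then comes down to $h^0(\mathcal{O}_C(-1))$. By \cite[Corollary 1.6]{N}, together with the description of multiple structures on lines in \cite{N}, a degree $2$ curve is either reduced (a smooth or degenerate conic, or a pair of skew lines) or a double line supported on a single line $L$. In the reduced case $\mathcal{O}_C(-1)$ has negative degree on each irreducible component, so restricting to the components---or, for two concurrent lines, using the Mayer--Vietoris sequence $0\to\mathcal{O}_C\to\mathcal{O}_{L_1}\oplus\mathcal{O}_{L_2}\to\mathcal{O}_{p}\to0$---gives $h^0(\mathcal{O}_C(-1))=0$, hence $h^1(\mathcal{O}_C(-1))=1+p_a(C)$, which is $1$ for $p_a(C)=0$ and $0$ for $p_a(C)=-1$.

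For the double line case I would instead use the conductor sequence $0\to\mathcal{I}_L/\mathcal{I}_C\to\mathcal{O}_C\to\mathcal{O}_L\to0$ with $L=C_{red}$ and $\mathcal{I}_L/\mathcal{I}_C\simeq\mathcal{O}_L(e)$; an Euler characteristic count along this sequence gives $e=-1-p_a(C)$. Twisting by $\mathcal{O}(-1)$ and passing to cohomology, the vanishing of $H^0(\mathcal{O}_L(-1))$ and $H^1(\mathcal{O}_L(-1))$ identifies $h^1(\mathcal{O}_C(-1))$ with $h^1(\p1,\mathcal{O}(e-1))=h^1(\p1,\mathcal{O}(-2-p_a(C)))$, which equals $1$ for $p_a(C)=0$ and $0$ for $p_a(C)\in\{-1,-2\}$---in perfect agreement with the reduced cases. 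Combining the two families gives $h^0(T_\sD(1))=h^1(\mathcal{O}_C(-1))$ equal to $1$ when $l=5$ and $0$ when $l\in\{3,4\}$, as claimed. The one point demanding care is exactly this uniformity: I must verify that $h^1(\mathcal{O}_C(-1))$ is insensitive to which curve of the prescribed degree and genus actually occurs, in particular to the degenerations allowed within each genus and to the reduced-versus-double-line dichotomy. Both the conductor sequence (for double structures) and restriction to components (for reduced curves) reduce the answer to $H^1$ of line bundles on $\p1$ that depend only on $p_a(C)$, which is what makes the final value well defined.
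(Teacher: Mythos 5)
Your proposal is correct and follows essentially the same route as the paper: both reduce $h^0(T_\sD(1))$ to $h^1(\mathcal{O}_C(-1))$ via Lemma \ref{lem:h2=0} and Serre duality, read off $\deg(C)=2$ and $p_a(C)=l-5$ from \eqref{eq:chern sing1 deg2}, and then evaluate this cohomology using the classification of degree-$2$ curves from \cite{N}. The paper merely organizes the final computation differently---a single exact sequence $0\to\mathcal{O}_{L_1}(3-l)\to\mathcal{O}_C(-1)\to\mathcal{O}_{L_2}(-1)\to0$ covering all cases with $l\le4$, and the Koszul resolution of the (possibly degenerate) conic for $l=5$---which amounts to the same bookkeeping as your Riemann--Roch, Mayer--Vietoris and conductor-sequence arguments.
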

	
	\begin{proof}
		From Lemma \ref{lem:h2=0}, we have that $h^0(T_\sD(1)) = h^0(\omega_C(1)) = h^1(\mathcal{O}_C(-1))$, where $C = \sing_1(\sD)$; we will compute $h^1(\mathcal{O}_C(-1))$. From \eqref{eq:chern sing1 deg2} we get $\deg(C) = 2$ and $p_a(C) = l-5$.
		
		For $l\leq 4$, it follows that
		\[
		0 \longrightarrow \mathcal{O}_{L_1}(3-l) \longrightarrow \mathcal{O}_C(-1) \longrightarrow \mathcal{O}_{L_2}(-1) \longrightarrow 0
		\]
		where $L_1$ and $L_2$ are reduced lines and $L_1\neq L_2$ only if $l=4$, see \cite{N}. It is then clear that $h^1(\mathcal{O}_C(-1)) =0$. 
		
		For $l=5$ the curve $C$ is a conic (not necessarily irreducible nor reduced) whence
		\[
		0 \longrightarrow \op3(-4) \longrightarrow \op3(-3) \oplus \op3(-2) \longrightarrow \mathscr{I}_C(-1) \longrightarrow 0
		\]
		and it follows that $h^1(\mathcal{O}_C(-1))= h^2(\mathscr{I}_C(-1)) = 1$. 
	\end{proof}

	%----------------------------------------------------------
	
	\subsection{Case \texorpdfstring{$ c_3(T_\sD) =6$}{c3 = 6}} \label{sec:4-6}
	In this case we have that $\sing(\sD)$ is the union of a double line $C$ of genus $-2$ with $6$ points. We will show that these distributions exist with the following example.
	
	\begin{example}\label{ex:c2=4 c3=6}
		Consider the double structure $C$ on the line $\{x=y=0\}$ given by the ideal
		\[
		I_C = (x^2,xy, y^2, x(z^2-w^2)-yzw)
		\]
		and fix the points $(1:0:0:0)$, $(0:1:0:0)$, $(1:1:1:1)$ and $(-1:1:-1:1)$. From them we can compute the following $1$-form
		\begin{align*}
			\omega = &(-xy^2+xyz+y^2z+2xz^2+2xyw-y^2w-2yzw-2xw^2)dx\\ &+(x^2y+x^2z+xyz+xz^2-2x^2w-yzw-xw^2)dy\\ &+(-2x^2y-2xy^2-2x^2z-xyz+x^2w+4xyw+2y^2w)dz\\ &+(xy^2-x^2z-2xyz-y^2z+2x^2w+xyw)dw,
		\end{align*}
		which is singular at the prescribed scheme and also at $(38:-19:14:-4)$ and $(9:-27:17:13)$.
	\end{example}

	Let $\{k_1, k_2, k_3 ,k_4\}$ be the spectrum of $T_\sD$ for a distribution $\sD$ with $c_2(T_\sD)= 4$ and $c_3(T_\sD)= 6$.
	In Lemma \ref{lem:spec46} below, we will show that $h^2(T_\sD(-1))=0$ hence Remark \ref{rem:properties of spectrum} implies that $k_i \geq -1$. Since $k_1+k_2+k_3+k_4 = -3$, the only possible spectrum is
	\[
	\{-1,-1,-1,0\}.
	\]

	\begin{lemma}\label{lem:spec46}
		If $\cald$ is a codimension one distribution on $\p3$ of degree $2$ such that $(c_2(T_\sD),c_3(T_\sD))=(4,6)$, then $h^2(T_\sD(-1))=0$.
	\end{lemma}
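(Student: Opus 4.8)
The plan is to read off $h^2(T_\sD(-1))$ from the dual sequence \eqref{seq:zeta} rather than from the distribution sequence directly, since \eqref{seq:zeta} involves only $C=\sing_1(\sD)$ and not the embedded points. Write \eqref{seq:zeta} as the splice of the two short exact sequences
\[
0 \to \op3(-4) \to \Omega^1_{\p3} \to Q \to 0, \qquad 0 \to Q \to T_\sD \xrightarrow{\zeta} \omega_C \to 0,
\]
where $Q=\ker\zeta$ is the image of $\Omega^1_{\p3}$ in $T_\sD$. Twisting by $-1$ and using Bott's vanishing $H^2(\Omega^1_{\p3}(-1))=H^3(\Omega^1_{\p3}(-1))=0$, the first sequence gives $H^2(Q(-1))\cong H^3(\op3(-5))\cong H^0(\op3(1))^\vee$, of dimension $4$. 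Since $\omega_C$ is supported on a curve, $H^2(\omega_C(-1))=0$, so the second sequence yields the exact piece
\[
H^1(\omega_C(-1)) \xrightarrow{\;\alpha\;} H^2(Q(-1)) \to H^2(T_\sD(-1)) \to 0,
\]
that is, $H^2(T_\sD(-1))=\coker\alpha$. Thus it suffices to prove that $\alpha$ is surjective.

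Next I would pin down the source of $\alpha$. By \eqref{eq:chern sing1 deg2} the curve $C$ has degree $2$ and arithmetic genus $-2$, i.e. it is a double line of genus $-2$; the Serre duality $h^1(\omega_C(-1))=h^0(\mathcal{O}_C(1))$ for the Cohen--Macaulay curve $C$ (noted after Lemma \ref{lem:h2=0}), together with the ribbon sequence $0\to\op1(2)\to\mathcal{O}_C(1)\to\op1(1)\to0$ coming from $\mathscr{I}_{C_{red}}/\mathscr{I}_C\cong\op1(1)$, gives $h^0(\mathcal{O}_C(1))=5$. Hence $\alpha$ is a map $\mathbb{C}^5\to\mathbb{C}^4$, and surjectivity is exactly the statement that it has full rank $4$.

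The crux is to identify $\alpha$. Because \eqref{seq:zeta} is obtained by applying $\inhom(-,\op3)$ to the distribution sequence $0\to T_\sD\to\tp3\to\mathscr{I}_Z(4)\to0$, Grothendieck--Serre duality should identify the connecting map $\alpha\colon H^1(\omega_C(-1))\to H^3(\op3(-5))$ with the transpose of the restriction map $H^0(\op3(1))\to H^0(\mathcal{O}_C(1))$, under the dualities $H^3(\op3(-5))^\vee\cong H^0(\op3(1))$ and $H^1(\omega_C(-1))^\vee\cong H^0(\mathcal{O}_C(1))$. Granting this, $\coker\alpha\cong\ker\!\big(H^0(\op3(1))\to H^0(\mathcal{O}_C(1))\big)=H^0(\mathscr{I}_C(1))$, so that $h^2(T_\sD(-1))=h^0(\mathscr{I}_C(1))$ equals the number of hyperplanes containing $C$ (a formula one can cross-check against every other line of Table \ref{deg 2 table}). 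I expect this compatibility of the connecting homomorphism with Serre duality to be the main obstacle, since it requires tracking the duality isomorphisms through both short exact sequences and matching the identifications carefully; a more hands-on alternative is to verify that $\alpha$ has rank $4$ by a direct cohomological computation on an explicit free resolution of $C$.

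Finally I would conclude by non-degeneracy. A double line of genus $-2$ is non-planar: by \cite{N} its saturated ideal has the form $I_C=(x^2,xy,y^2,xp+yq)$ with $\deg p=\deg q=2$, which contains no linear form, whereas a planar double line would have genus $0$. Hence $h^0(\mathscr{I}_C(1))=0$, and therefore $h^2(T_\sD(-1))=h^0(\mathscr{I}_C(1))=0$, as claimed.
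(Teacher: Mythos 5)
Your reduction is sound and follows a genuinely different route from the paper's: the paper argues by contradiction, producing from a nonzero class in $H^2(T_\sD(-1))$ an unstable plane of order $2$ (via the multiplication map $H^0(\op3(1))\otimes H^2(T_\sD(-1))^\vee\to H^2(T_\sD(-2))^\vee$), applying Hartshorne's reduction step to get a stable sheaf $F'$ with $c_1(F')=-1$, $c_2(F')=2$, and contradicting $h^0(T_\sD(1))=0$ (Lemma \ref{lem:h0(T_D(1))=0,c2=4}) with $h^0(F'(1))>0$. Your setup --- splitting \eqref{seq:zeta} at $Q=\ker\zeta$, computing $H^2(Q(-1))\cong H^3(\op3(-5))\cong\C^4$ and $h^1(\omega_C(-1))=h^0(\mathcal{O}_C(1))=5$, and identifying $H^2(T_\sD(-1))=\coker\alpha$ --- is correct, and the formula you are after, $h^2(T_\sD(-1))=h^0(\mathscr{I}_C(1))$, is in fact true.

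However, the step you ``grant'' is precisely the mathematical content of the lemma, and it is not mere bookkeeping of duality isomorphisms. What functorial Serre duality, the identification of connecting maps with Yoneda products, and the local-to-global spectral sequence give for free is only this: $\alpha^\vee$ equals the restriction map $H^0(\op3(1))\to H^0(\mathcal{O}_C(1))$ \emph{followed by multiplication by} a section $s_e\in H^0(\mathcal{O}_C)$, where $s_e$ is the image of the Yoneda class $e\in\Ext^2(\omega_C,\omega_{\p3})$ of the four-term sequence \eqref{seq:zeta} under $\Ext^2(\omega_C,\omega_{\p3})\cong H^0(\inext^2(\omega_C,\omega_{\p3}))\cong H^0(\mathcal{O}_C)$. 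Since $C$ is a nonreduced ribbon with $h^0(\mathcal{O}_C)=3$, this ring has genuine zero-divisors; if $s_e$ were one of them, $\ker\alpha^\vee$ would be strictly larger than $H^0(\mathscr{I}_C(1))$ and your conclusion would fail. So the missing step is to prove that $s_e$ is a non-zero-divisor on $\mathcal{O}_C$, and this is where the geometry enters: at a generic point $x$ of $C$ the sheaf $T_\sD$ is locally free (its non-locally-free locus is finite), so the localization of \eqref{seq:zeta} at $x$ is a length-two free resolution of $(\omega_C)_x$; the Yoneda class of such a resolution $0\to F_2\to F_1\to F_0\to M\to 0$ is the image of $\mathrm{id}_{F_2}$ in $\coker\bigl(\Hom(F_1,F_2)\to\Hom(F_2,F_2)\bigr)=\Ext^2(M,F_2)$, and since $F_2$ has rank one this image generates $\Ext^2(M,F_2)\cong\mathcal{O}_{C,x}$ as a cyclic module, i.e.\ it is a unit. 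A section of $\mathcal{O}_C$ that is a unit at every generic point of the Cohen--Macaulay curve $C$ is a non-zero-divisor, and with this inserted your argument closes. Two smaller points: your fallback of computing $\alpha$ ``on an explicit free resolution of $C$'' cannot work as stated, because $\alpha$ depends on the extension class of \eqref{seq:zeta}, hence on the distribution and not only on $C$ (it is only after the identification above that the answer depends on $C$ alone); and the cross-checks against Table \ref{deg 2 table} are evidence, not proof.
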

	
	\begin{proof}
		From Lemma \ref{lem:h2=0} and the Hirzebruch-Riemann-Roch Theorem, we know that $h^2(T_\sD(-2))= 3$ and $h^2(T_\sD(-1)) \leq 1$. Let us assume, by contradiction, that $h^2(T_\sD(-1)) = 1$. Let $F:= T_\sD$ to simplify notation.
		
		For any hyperplane $H = \{h =0 \} \subset \p3$ we have the exact sequence
		\[
		0 \longrightarrow F(-2)  \stackrel{\cdot h}{\longrightarrow} F(-1)  \longrightarrow F_H(-1)  \longrightarrow 0.
		\]
		Taking the long exact sequence of cohomology and dualizing it we can extract the following:
		\[
		H^2(F_H(-1))^\vee \longrightarrow H^2(F(-1))^\vee \stackrel{\cdot h}{\longrightarrow} H^2(F(-2))^\vee.
		\]
		Thus $h^2(F_H(-1)) \neq 0$ if and only if $h$ annihilates $H^2(F(-1))^\vee$. On the other hand, $h^2(F_H(-1)) \neq 0$ if and only if $H$ is an unstable plane of order $2$ for $F$, see \cite[Section 9]{H2}. To see that such unstable planes exist, we take the multiplication map
		\[
		H^0(\op3(1))\otimes H^2(F(-1))^\vee \longrightarrow H^2(F(-2))^\vee.
		\]
		Since $h^2(F(-1)) = 1$ and $h^2(F(-2))= 3$, the kernel of the map above is nontrivial. 
		
		Let $H$ be a an unstable  plane of order $2$ for $F$. We then have a reduction step, see \cite[Proposition 9.1]{H2}, 
		\[
		0 \longrightarrow F' \longrightarrow F  \longrightarrow \mathscr{I}_{W/H}(-2)  \longrightarrow 0,
		\]
		where $F'$ is a rank $2$ reflexive sheaf and $W$ is a 0-dimensional subscheme of $H$. Thus  $c_1(F') = -1$ and $c_2(F') = 2$. Moreover, $h^0(F') = h^0(F) = 0$ hence $F'$ is stable and $c_3(F')\in \{0,2,4\}$. 
		
		Note that $h^0(F(1)) = 0$, due to Lemma \ref{lem:h0(T_D(1))=0,c2=4}. However, we get a contradiction since $h^0(F'(1)) >0$. 
		This last fact is due to \cite{HSols} if $c_3(F') = 0$, \cite[Table 2.3.1]{Ch} if $c_3(F') = 2$ and \cite[Lemma 9.6]{H2} if $c_3(F') = 4$.
	\end{proof}

	%----------------------------------------------------------
	
	\subsection{Case \texorpdfstring{$ c_3(T_\sD) =8$}{c3 = 8}}
	\label{sec:4-8}
	
	For $\sD$ a distribution of degree two with $c_2(T_\sD) = 4$ and $c_3(T_\sD) =8$ we have that $\sing(\sD)$ is a curve $C$ of degree $2$ and genus $-1$ plus $8$ points. In particular $C$ is either a pair of skew lines or a double line. We will show that such distributions exist using an auxiliary foliation.
	
	Let $N$ be a null correlation bundle, and let $\sigma\in H^0(N(1))$ such that $C:=(\sigma)_0$ consists of the union of two disjoint lines. Let $\sG$ be a generic foliation by curves of degree $3$ given by the exact sequence
	\[
	\sG ~:~ 0 \longrightarrow N(-3) \longrightarrow \Omega^1_{\p3} \longrightarrow \mathscr{I}_W(2) \longrightarrow 0,
	\]
	so that $W$ is irreducible; such foliations exist by \cite[Proposition 8.3]{CJM}. As $\deg(\sG) =3$ and $W$ is irreducible, we may apply Proposition \ref{prop: const dist folbc} to conclude that the distribution $\sD$ induced by $(\sG,\sigma)$ satisfies $\sing_1(\sD) = C$. Moreover, it follows from $\eqref{eq:chern dist from folbc deg2}$ that $c_2(T_\sD)=4$ and $c_3(T_\sD)=8$.
	
	The next step is to determine the possible spectra of the tangent sheaf $T_\sD$, and we proceed as in the previous subsection. Let $\{k_1, k_2, k_3 ,k_4\}$ be the spectrum of $T_\sD$. In Lemma \ref{lem:spec48} below, we show that $h^2(T_\sD(-1))=0$ hence $k_i \geq -1$. As $k_1+k_2+k_3+k_4 = -4$ we see that the only possible spectrum is 
	\[
	\{-1,-1,-1,-1\}
	\]

	\begin{lemma}\label{lem:spec48}
		If $\cald$ is a codimension one distribution on $\p3$ of degree $2$ such that $(c_2(T_\sD),c_3(T_\sD))=(4,8)$, then 
		\[
		0 \longrightarrow \tp3(-4)^{\oplus 2} \longrightarrow \op3(-2)^{\oplus 8} \longrightarrow T_\sD \longrightarrow 0;
		\]
		in particular, $h^2(T_\sD(-1))=0$.
	\end{lemma}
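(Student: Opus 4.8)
The cohomological claim is immediate once the resolution is available, so the real content is the resolution itself. Twisting the displayed sequence by $\op3(-1)$ gives
\[
0 \longrightarrow \tp3(-5)^{\oplus2} \longrightarrow \op3(-3)^{\oplus8} \longrightarrow T_\sD(-1) \longrightarrow 0,
\]
and since $H^2(\op3(-3))=0$ while $H^3(\tp3(-5))\cong H^0(\Omega^1_{\p3}(1))^\vee=0$ by Serre duality, the long exact sequence forces $h^2(T_\sD(-1))=0$. Thus I would spend the proof building the resolution.

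The first step is to present $T_\sD$ as a cokernel of two familiar bundles. For the distribution $\sD$ induced by the pair $(\sG,\sigma)$, Proposition \ref{prop: const dist folbc} provides
\[
0 \longrightarrow \op3(-2) \longrightarrow T_\sD \longrightarrow \mathscr{I}_W(2)\longrightarrow 0,
\]
while the defining sequence of the degree $3$ foliation by curves $\sG$ reads
\[
0 \longrightarrow N(-3) \longrightarrow \Omega^1_{\p3} \longrightarrow \mathscr{I}_W(2)\longrightarrow 0 .
\]
Forming the fibre product of $T_\sD$ and $\Omega^1_{\p3}$ over $\mathscr{I}_W(2)$ produces a sheaf $P$ sitting in $0\to\op3(-2)\to P\to\Omega^1_{\p3}\to 0$ and $0\to N(-3)\to P\to T_\sD\to 0$. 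The first extension splits, because $\Ext^1(\Omega^1_{\p3},\op3(-2))\cong H^1(\tp3(-2))=0$; hence $P\cong\op3(-2)\oplus\Omega^1_{\p3}$ and I obtain
\[
0 \longrightarrow N(-3) \longrightarrow \op3(-2)\oplus\Omega^1_{\p3} \longrightarrow T_\sD \longrightarrow 0 .
\]

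Next I would resolve the two non-free summands of the middle term using two classical sequences: the self-duality of the null correlation bundle, $0\to N(-3)\to \tp3(-4)\to\op3(-2)\to 0$, and the second exterior power of the (twisted) Euler sequence together with $\Omega^2_{\p3}\cong\tp3(-4)$, namely $0\to\tp3(-4)\to\op3(-2)^{\oplus6}\to\Omega^1_{\p3}\to 0$. Pushing the previous sequence out along $N(-3)\hookrightarrow\tp3(-4)$ and splitting off the resulting copy of $\op3(-2)$ (using $\Ext^1(\op3(-2),\Omega^1_{\p3})\cong H^1(\Omega^1_{\p3}(2))=0$) gives $0\to\tp3(-4)\to\op3(-2)^{\oplus2}\oplus\Omega^1_{\p3}\to T_\sD\to0$; substituting the resolution of $\Omega^1_{\p3}$ then turns the middle term into $\op3(-2)^{\oplus8}$ at the cost of a second copy of $\tp3(-4)$. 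Splicing the two short exact sequences, the kernel $K$ of $\op3(-2)^{\oplus8}\to T_\sD$ fits in $0\to\tp3(-4)\to K\to\tp3(-4)\to 0$.

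Finally, this last extension splits because $\tp3$ is rigid: $\Ext^1(\tp3(-4),\tp3(-4))\cong H^1(\tp3\otimes\Omega^1_{\p3})=0$. Hence $K\cong\tp3(-4)^{\oplus2}$ and the claimed resolution follows. I expect the main obstacle to be the homological bookkeeping in the pushout-and-splice step and the careful verification of the three $\Ext^1$-vanishings, most delicately $H^1(\tp3\otimes\Omega^1_{\p3})=0$, which I would extract from the Euler sequence together with the stability (hence simplicity) of $\tp3$. A secondary point worth recording is that $K$ is genuinely locally free, which is automatic since $T_\sD$, being reflexive on a smooth threefold, has projective dimension one.
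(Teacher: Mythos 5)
Your argument has a genuine gap: it proves the statement only for the particular distributions constructed in Section \ref{sec:4-8}, not for an arbitrary distribution with $(c_2(T_\sD),c_3(T_\sD))=(4,8)$. From the very first step you assume that $\sD$ is ``the distribution induced by the pair $(\sG,\sigma)$'', so that $T_\sD$ sits in $0\to\op3(-2)\to T_\sD\to\mathscr{I}_W(2)\to0$ with $W=\sing(\sG)$ and, crucially, that the degree $3$ foliation by curves $\sG$ has conormal sheaf $N_\sG^*\simeq N(-3)$ for a null correlation bundle $N$ --- this is exactly what lets you invoke $0\to N(-3)\to\tp3(-4)\to\op3(-2)\to0$. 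But the lemma, as stated and as used in the paper (to conclude that the spectrum is $\{-1,-1,-1,-1\}$ for \emph{every} distribution with these invariants, which feeds into Table \ref{deg 2 table} and the Main Theorem), must hold for an arbitrary such $\sD$. Nothing in your proof shows that every distribution with these Chern classes arises from such a pair: a general section of $T_\sD(2)$ (which does exist, since $\chi(T_\sD(2))=8$) induces a sub-foliation by curves of degree $3$, but its conormal sheaf is merely a rank $2$ reflexive sheaf with $c_1=-6$ and need not be a twist of a null correlation bundle, so neither of your two key exact sequences is available in general.

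By contrast, the paper argues intrinsically for an arbitrary $\sD$: Lemma \ref{lem:h2=0} gives $h^1(T_\sD(1))=h^2(T_\sD)=h^3(T_\sD(-1))=0$ and $h^2(T_\sD(-1))\le1$, so $T_\sD(2)$ is globally generated by the Castelnuovo--Mumford criterion with $h^0(T_\sD(2))=\chi(T_\sD(2))=8$; iterating the evaluation map yields a free resolution $0\to\op3(-4)^{\oplus2}\xrightarrow{A}\op3(-3)^{\oplus8}\xrightarrow{B}\op3(-2)^{\oplus8}\to T_\sD\to0$, and a Kronecker normal form analysis of the pencil defined by the linear matrix $A$ (using $h^2(T_\sD(-1))\le1$) leaves only two cases: one gives $\coker A\simeq\tp3(-4)^{\oplus2}$, hence the claim, and the other is excluded because $\coker B$ would fail to be reflexive along a line. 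I should add that within your restricted setting the homological algebra is correct --- the fibre product, the pushout, the splice, and the three $\Ext^1$-vanishings $H^1(\tp3(-2))=H^1(\Omega^1_{\p3}(2))=H^1(\tp3\otimes\Omega^1_{\p3})=0$ all check out, as does the final cohomology computation --- so what you have is a clean derivation of the resolution for the constructed examples; to make it a proof of the lemma you would first have to show that every $\sD$ with $(c_2,c_3)=(4,8)$ is induced by such a pair $(\sG,\sigma)$, and that is precisely the hard part the paper's intrinsic argument avoids.
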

	
	\begin{proof}
		From Lemma \ref{lem:h2=0} and noting that $C$ is a curve of degree $2$ and genus $-1$, we get that $h^1(T_\sD(1)) = h^2(T_\sD) = h^3(T_\sD(-1)) = 0$ and $h^2(T_\sD(-1)) \leq 1$. In particular, $T_\sD(2)$ is globally generated due to the Castelnuovo--Mumford criterion;  in addition, $h^0(T_\sD(2))=\chi(T_\sD(2))=8$. Let $E(2)$ be the kernel of the evaluation of global sections, then
		\begin{equation} \label{sqc e td}
			0 \longrightarrow E \longrightarrow \op2(-2)^{\oplus 8} \longrightarrow T_\sD\longrightarrow 0.
		\end{equation}
		Since $T_\sD$ is reflexive, we have that $\inext^p(E,\op3)\simeq\inext^{p+1}(T_\sD,\op3))=0$ for $p\ge1$, thus $E$ must be locally free; computing its cohomologies we also see that $E(3)$ is globally generated and, due to its Chern classes, we get
		\[
		0 \longrightarrow \op3(-4)^{\oplus 2} \longrightarrow \op2(-3)^{\oplus 8} \longrightarrow E \longrightarrow 0.
		\]
		Therefore $T_\sD$ has a resolution 
		\begin{equation}\label{eq:res48}
			0 \longrightarrow \op3(-4)^{\oplus 2} \stackrel{A}{\longrightarrow} \op2(-3)^{\oplus 8} \stackrel{B}{\longrightarrow} \op2(-2)^{\oplus 8} \longrightarrow T_\sD\longrightarrow 0
		\end{equation}
		where $A$ and $B$ are matrices of linear homogeneous polynomials. 
		
		Note that the above resolution allows us to compute
		\[
		H^2(T_\sD(-1)) = \Ext^1(T_\sD, \op3(-3)) = \ker \left( H^0(\op3)^{\oplus 8} \stackrel{ A^T \cdot}{\longrightarrow} H^0(\op3(1))^{\oplus 2} \right)
		\]
		where $A^T$ is the transpose of $A$. We may write $A = (a_{ij})$ with $a_{ij} = \sum_{k=0}^3 a_{ij}^kx_k$ then we define a  pencil of $4\times 8$ matrices $C(t) = (c_{ij}(t))$ where $c_{ij}(t) = a^i_{j1} + a^i_{j2}t$. We can assume that $C(t)$ is in Kronecker normal form, see \cite[Chapter XII]{GANT}, so that we only need to analyse few possibilities for $A$, up to linear change of coordinates and left action of ${\rm GL}(8, \mathbb{C})$. In fact, since $h^2(T_\sD(-1)) \leq 1$, we have only two possibilities:
		\[
		A^T = \begin{bmatrix}
			x_0 & 0 & x_1 & 0 & x_2 & 0 & x_3 &0 \\ 0 & x_0& 0  & x_1 & 0 & x_2 & 0 & x_3
		\end{bmatrix} , \begin{bmatrix}
			x_0 & 0 & x_1 & 0 & x_2 & x_3 & 0 &0 \\ 0 & x_0& 0  & x_1 & 0 & x_2 & x_3 & 0
		\end{bmatrix}.
		\]
		
		In the first case we rearrange the columns 
		\[
		A^T = \begin{bmatrix}
			x_0 & x_1  & x_2  & x_3 &  0& 0  & 0 &0 \\ 0 & 0& 0  & 0 & x_0 & x_1  & x_2  & x_3 
		\end{bmatrix}
		\]
		so that it is clear that 
		\[
		E = \coker \left( A\colon \op3(-4)^{\oplus 2} \longrightarrow \op2(-3)^{\oplus 8}  \right)  = \tp3(-4)^{\oplus 2}.
		\]
		In particular, $h^2(T_\sD(-1))= 2h^3(\tp3(-5)) = 0$. 
		
		Next we show that the second case cannot happen. Since the columns of $B^T$ are linear syzygies for the rows of $A^T$, we can compute them explicitly and it follows that, for the second case, up to the right action of ${\rm GL}(8, \mathbb{C})$, 
		\[
		B = \begin{bmatrix}
			-y & 0 & x & 0 & 0 & 0 & 0 & \ast \\
			0 & -y & 0 & x & 0 &  0 & 0 & \ast \\
			0 & 0 & -z & 0 & y & 0 & 0 & \ast \\
			-z & 0 & 0 & 0 & x & 0 & 0 & \ast \\
			0 & 0 & -w & -z & 0 & y & 0 & \ast \\
			-w & -z & 0 & 0 & 0 & x & 0 & \ast \\
			0 & 0 & 0 & -w & 0 & 0 & y & \ast \\
			0 & -w & 0 & 0 & 0 & 0 & x & \ast \\
		\end{bmatrix},
		\]
		where the $(*)$ are unknown linear polynomials. Note that $B$ has generic rank $6$ and that $B$ has rank $\leq 5$ along the line $\{x=y=0\}$. Thus the cokernel of $B \colon \op2(-3)^{\oplus 8} \rightarrow \op2(-2)^{\oplus 8}$ is not locally free around any point of the line $\{x = y = 0\}$; in particular, $\coker B$ cannot be reflexive, which $T_\sD$ is. Therefore the second case cannot occur. 
		
	\end{proof}

	%----------------------------------------------------------
	
	\subsection{Case \texorpdfstring{$ c_3(T_\sD) =10$}{c3 = 10}}
	\label{sec:4-10}
	
	A distribution $\sD$ in this case is singular at a conic plus $10$ points. We may show that such distributions exist using an auxiliary foliation by curves. 
	
	Start with a generic foliation by curves of degree 2 of the form
	\begin{equation}\label{sg 4-10}
		\sG ~:~ 0 \longrightarrow \op3(-2)\oplus\op3(-3) \longrightarrow \Omega^1_{\p3} \longrightarrow \mathscr{I}_W(1) \longrightarrow 0,
	\end{equation}
	with $W$ being a smooth and irreducible curve of degree 5 and genus 1. Since $N_\sG^*(4) = \op3(2)\oplus\op3(1)$, we can find $\sigma\in H^0(N_\sG^*(4))$ such that $C = (\sigma)_0$ is a conic disjoint from $W$. Therefore, Proposition \ref{prop: const dist folbc} guarantees that the induced codimension one distribution $\sD$ is such that $\sing_1(\sD)=C$ provided $\eta\in H^0(\omega_Z(2))$ is non zero; the formulas in display \eqref{eq:chern sing1 deg2} yield $c_2(T_\sD) = 4 $ and $c_3(T_\sD))=10$. We only need to show that $\eta \neq 0$.
	
	Assume by contradiction that $\eta=0$, so that the long exact sequence in display \eqref{long sqc} breaks into two parts
	\begin{align*}
		0 \longrightarrow \op3(-1) \longrightarrow F^* \longrightarrow \op3(1) \longrightarrow 0; \\
		0 \longrightarrow \omega_W(3) \longrightarrow \mathcal{O}_Z(4) \longrightarrow \omega_C(5)\longrightarrow 0.
	\end{align*}
	Using that $\omega_W = \mathcal{O}_W$ and $\omega_C = \mathcal{O}_C(-1)$ we conclude that
	\[
	0 \longrightarrow \mathcal{O}_W(-1) \longrightarrow \mathcal{O}_Z \longrightarrow \mathcal{O}_C\longrightarrow 0
	\]
	hence $Z = W\cup C$ and $W\cap C \neq \emptyset$, which is absurd. 
	The next step is to determine the spectrum of the tangent sheaf. Let $\sD$ be a distribution of degree $2$ satisfying $c_2(T_\sD) = 4 $ and $c_3(T_\sD))=10$ and let $\{k_1, k_2,k_3,k_4\}$ be the spectrum of $T_\sD$. First we claim that 
	$h^1(T_\sD(-1))=0$, so that $k_1 \leq -1$. It follows that $k_i \geq -2$; indeed as $k_1+ k_2+k_3+k_4 = -5$ we could not have $\{-3,-2,-1\}$ contained in the spectrum. Therefore, the only possible spectrum is
	\[
	\{-2,-1,-1,-1\}.
	\]
	Now we prove our claim.
	
	\begin{lemma} 
		If $T_\sD$ is the tangent sheaf of a codimension one distribution $\sD$ of degree 2 with $(c_2(T_\sD),c_3(T_\sD))=(4,10)$, then $h^1(T_\sD(-1))=0$. 
	\end{lemma}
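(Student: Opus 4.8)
The plan is to exploit the single nonzero section that $T_\sD(1)$ is forced to carry. By Lemma \ref{lem:h0(T_D(1))=0,c2=4} (applied with $l=5$) we have $h^0(T_\sD(1))=1$; since $T_\sD$ is $\mu$-stable with $c_1=0$, one has $h^0(T_\sD(m))=0$ for all $m\le 0$, so the unique section $s\in H^0(T_\sD(1))$ cannot vanish on a divisor and $\op3(-1)$ is a \emph{saturated} subsheaf of $T_\sD$. First I would record the resulting short exact sequence
\[
0\longrightarrow \op3(-1)\stackrel{s}{\longrightarrow} T_\sD\longrightarrow \mathscr{I}_W(1)\longrightarrow 0,
\]
with $W$ of codimension $\ge 2$ and $\mathscr{I}_W(1)$ torsion free. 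A Chern class computation gives $\deg W=5$; twisting by $-1$ and using $H^1(\op3(-2))=H^2(\op3(-2))=0$ yields $h^1(T_\sD(-1))=h^1(\mathscr{I}_W)$ and $h^2(T_\sD(-1))=h^2(\mathscr{I}_W)$. Feeding this into $0\to\mathscr{I}_W\to\op3\to\mathcal{O}_W\to 0$ converts the desired vanishing into a \emph{connectedness} statement: $h^1(\mathscr{I}_W)=h^0(\mathcal{O}_W)-1$ and $h^2(\mathscr{I}_W)=h^1(\mathcal{O}_W)$.

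Next I would pin down the numerical shape of $W$. From the defining sequence (or directly from $\chi(T_\sD(-1))=1$ via Riemann--Roch) one gets $\chi(\mathcal{O}_W)=0$, and from $0\to\op3\to T_\sD(1)\to\mathscr{I}_W(2)\to 0$ together with $h^0(T_\sD(1))=1$ one gets $h^0(\mathscr{I}_W(2))=0$, i.e. $W$ lies on no quadric. A depth computation then shows that $W$ is a \emph{pure} one-dimensional Cohen--Macaulay curve: because $\op3(-1)$ is locally free and $T_\sD$ is reflexive, the depth lemma forces $\mathscr{I}_W(1)$ to have depth $\ge 2$ at every closed point, whence $\mathcal{O}_W$ has depth $\ge 1$ and no embedded or isolated points. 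Writing $W=\bigsqcup_{i=1}^r W_i$ as a disjoint union of connected CM curves, we then have $h^0(\mathcal{O}_W)=r$ and, since $\chi(\mathcal{O}_W)=0$, the relations $\sum_i\deg W_i=5$ and $\sum_i p_a(W_i)=r$.

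The heart of the argument, and the step I expect to be the main obstacle, is to deduce $r=1$. Each component satisfies the Castelnuovo-type bound $p_a(W_i)\le\binom{\deg W_i-1}{2}$, and any component of degree $\le 2$ contributes at most $0$ to $\sum_i p_a(W_i)$ while adding $1$ to $r$; running through the partitions of $5$, every splitting with $r\ge 2$ violates $\sum_i p_a(W_i)=r$ except $(\deg W_1,\deg W_2)=(1,4)$ with $p_a(W_2)=2$. To eliminate this last case I would note that a plane quartic has arithmetic genus $3$, so $p_a(W_2)=2$ forces $W_2$ to be nondegenerate, giving $h^0(\mathscr{I}_{W_2}(1))=0$; the Mayer--Vietoris sequence $0\to\mathscr{I}_W\to\mathscr{I}_{W_1}\oplus\mathscr{I}_{W_2}\to\op3\to 0$ then yields $h^1(\mathscr{I}_W(1))\ge 2$. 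As $h^1(\mathscr{I}_W(1))=h^1(T_\sD)$ and $h^2(T_\sD)\le 1$ by Lemma \ref{lem:h2=0}, Riemann--Roch forces $h^2(T_\sD)=1$, so $-3$ occurs in the spectrum; but then Remark \ref{rem:properties of spectrum} gives $h^2(T_\sD(-1))\ge 3$, contradicting $h^2(T_\sD(-1))=h^1(\mathcal{O}_W)=2$. Hence no nontrivial splitting survives, $W$ is connected, and $h^1(T_\sD(-1))=h^0(\mathcal{O}_W)-1=0$. The delicate points to execute carefully are the saturation and purity claims for $W$ and the bookkeeping that converts a disconnection of $W$ into the spectral contradiction.
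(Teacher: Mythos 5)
Your reduction is set up correctly and, up to that point, parallels the paper: the unique section of $T_\sD(1)$ (Lemma \ref{lem:h0(T_D(1))=0,c2=4}) cannot vanish on a divisor, so it gives $0\to\op3(-1)\to T_\sD\to\mathscr{I}_W(1)\to0$ with $W$ a Cohen--Macaulay curve of degree $5$, $\chi(\mathcal{O}_W)=0$, $h^1(T_\sD(-1))=h^0(\mathcal{O}_W)-1$ and $h^2(T_\sD(-1))=h^1(\mathcal{O}_W)$. The genuine gap is the assertion $h^0(\mathcal{O}_W)=r$, the number of connected components. For locally Cohen--Macaulay curves that are \emph{non-reduced} this is false: the nilradical can carry global sections, so connectedness of $W$ is strictly weaker than the vanishing $h^1(\mathscr{I}_W)=0$ that you need. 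The simplest example occurs in this very paper: a double line $C$ of genus $-1$ (see Theorem \ref{thm:fbc deg 1}) sits in an extension $0\to\mathcal{O}_L\to\mathcal{O}_C\to\mathcal{O}_L\to0$, because $\mathscr{I}_{L/C}\simeq\mathcal{O}_L(-1-p_a(C))=\mathcal{O}_L$; hence $h^0(\mathcal{O}_C)=2$ although $C$ is connected. So after your partition analysis proves $r=1$, you are not done: you must still exclude a connected non-reduced $W$ of degree $5$ and genus $1$ whose structure sheaf has $h^0\ge 2$, for instance one containing a genus $-1$ double line glued to a residual curve along a subscheme of its reduced line. No step of your proof addresses this, and your closing list of ``delicate points'' (saturation, purity) does not include it --- those steps are actually fine.

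Moreover, your own elimination tool cannot close this gap. If $W$ is connected with $h^0(\mathcal{O}_W)=2$, then $h^1(T_\sD(-1))=1$ and $h^2(T_\sD(-1))=h^1(\mathcal{O}_W)=2$, which corresponds to the spectrum $\{-2,-2,-1,0\}$: it sums to $-5$, contains $0$ (so it is admissible for a $\mu$-stable sheaf), and is compatible with Lemma \ref{lem:h2=0}, Lemma \ref{lem:bound spectrum} and Remark \ref{rem:properties of spectrum}. Ruling out precisely this spectrum is the entire content of the lemma, so no bookkeeping of the kind you use can dispose of it. (Your disconnected case does survive, essentially because there the argument forces $h^2(T_\sD)=1$, hence $-3$ in the spectrum, hence the spectrum $\{-3,-2,-1,1\}$, which is impossible for a stable sheaf since $1$ occurs without $0$; note that routing the contradiction through ``$h^1(\mathcal{O}_W)=2$'' again tacitly assumes the degree-$4$ component is reduced enough to have $h^0(\mathcal{O})=1$.) By contrast, the paper's proof avoids the issue entirely: it shows that the section $\sigma$ also induces a foliation by curves whose conormal sheaf is forced to be $\op3(-2)\oplus\op3(-3)$, so that $\mathscr{I}_W(1)$ is the cokernel of a map $\op3(-2)\oplus\op3(-3)\to\Omega^1_{\p3}$, and $h^1(\mathscr{I}_W)=0$ follows from this resolution for \emph{any} such $W$, reduced or not. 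If you want to salvage your route, the natural place to look is your (so far unused) observation $h^0(\mathscr{I}_W(2))=0$ --- the easiest candidate bad curves lie on unions of two planes through the line --- but turning that into a proof requires a classification of the possible non-reduced structures, i.e., substantial work that the proposal does not contain.
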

	
	\begin{proof}
		By Lemma \ref{lem:h0(T_D(1))=0,c2=4}, $h^0(T_\sD(1))=1$, so let $\sigma\in H^0(T_\sD(1))$ be a nontrivial section and let $X:=(\sigma)_0$ be its vanishing locus. We have then the following commutative diagram:
		\[
		\xymatrix{
			& 0 \ar[d] & 0\ar[d] & & \\
			& \op3(-1) \ar[d]^{\sigma} \ar@{=}[r] & \op3(-1) \ar[d] & & \\
			0\ar[r] & T_\sD \ar[d]\ar[r]^{\phi} & \tp3 \ar[d]\ar[r] & \mathscr{I}_Z(4)\ar[r]\ar@{=}[d] & 0 \\
			0\ar[r] & \mathscr{I}_X(1) \ar[d]\ar[r] & G \ar[d]\ar[r] & \mathscr{I}_Z(4)\ar[r] & 0 \\
			& 0 & 0 & &
		}
		\]
		where $Z= \sing(\sD)$ is a conic $C$ plus $10$ points and $G$ is the cokernel of $\phi\circ\sigma$. From the first column from the left we have $h^1(T_\sD(-1)) = h^1(\mathscr{I}_X)$; we will compute $h^1(\mathscr{I}_X)$.
		
		Dualizing the second column from the left we get a foliation by curves 
		\begin{equation}\label{seq:folindc2=4,c3=10}
			0 \longrightarrow G^\ast \longrightarrow \Omega^1_\p3 \longrightarrow \mathscr{I}_W(1) \longrightarrow 0
		\end{equation}
		where $W\supset X$; and dualizing the bottom row we get 
		\[
		0 \longrightarrow \op3(-4) \longrightarrow G^* \longrightarrow \op3(-1) \stackrel{\zeta}{\longrightarrow} \omega_C \longrightarrow \cdots
		\]
		where $\zeta \in H^0(\omega_C(1))$. Note that as $C$ is a conic, $\omega_C = \mathcal{O}_C(-1)$ and also note that $\zeta$ is surjective; otherwise $\zeta= 0$ and $G^\ast$ would not admit an injective map to $\Omega^1_\p3$.
		
		In particular, we get 
		\[
		0 \longrightarrow \op3(-4) \longrightarrow G^*\longrightarrow \mathscr{I}_C(-1) \longrightarrow 0
		\]
		and the only possibility is $G^\ast = \op3(-2) \oplus \op3(-3)$. It follows that $W$ has pure dimension one and by degree reasons $X=W$. Thus \eqref{seq:folindc2=4,c3=10} gives as a resolution for $\mathscr{I}_X$ from which we get
		\[
		h^1(T_\sD(-1)) = h^1(\mathscr{I}_X) = 0.
		\]
	\end{proof}

	%%%%%%%%%%%%%%%%%%%%%%%%%%%%%%%%%%%%%%%%%%%%%%
	%%%%%%%%%%%%%%%%%%%%%%%%%%%%%%%%%%%%%%%%%%%%%%

	\section{Distributions with \texorpdfstring{$c_2(T_\sD)=5 $}{c2 =5}}\label{sect:c2=5}
	
	If $\sD$ is a distribution of degree two such that $c_2(T_\sD)=5$, then $\sing_1(\sD)$ is a curve of degree $1$, i.e., a reduced line. Thus $c_3(T_\sD)= 14$, owing to \eqref{eq:chern sing1 deg2}. These distributions do exist and, in fact, we can prove a more general result.
	
	\begin{proposition}\label{prop:exist c2=5} 
		For each $d\ge2$ and any line $L\subset \mathbb{P}^3$, there exists a codimension one distribution $\sD$ of degree $d$ whose singular scheme consists of $L$ plus $d^3+2d^2-d$ points, so that $(c_2(T_\sD),c_3(T_\sD))=(d^2+1,d^3+2d^2-d)$.
	\end{proposition}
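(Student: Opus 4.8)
The plan is to reduce to a convenient line and then produce an explicit $1$-form singular exactly along it, letting the Chern class formula do the counting. First I would use $\mathrm{PGL}_4(\mathbb{C})$ to assume $L=\{x_0=x_1=0\}$, so that $I_L=(x_0,x_1)$. The key observation is that the Chern class assertions are \emph{automatic}: once I exhibit a degree $d$ distribution $\sD$ with $\sing_1(\sD)=L$, so that $C=L$ has $\deg C=1$ and $p_a(C)=0$, the formulas in \eqref{eq:chern-deg,pa} immediately give $c_2(T_\sD)=d^2+1$ and $\mathrm{length}(U)=c_3(T_\sD)=d^3+2d^2-d$. Thus the entire content of the proposition is the \emph{existence} of a degree $d$ distribution whose one-dimensional singular part is a reduced line.

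To build such a distribution I would set $\omega=x_0\pi+x_1\rho$, where $\pi=\sum P_i\,dx_i$ and $\rho=\sum Q_i\,dx_i$ are general elements of $H^0(\Omega^1_{\p3}(d+1))$, i.e.\ general degree $d-1$ distribution $1$-forms (this space is nonzero and large for $d\ge2$). The Euler relations $\sum P_ix_i=\sum Q_ix_i=0$ give $\sum_i(x_0P_i+x_1Q_i)x_i=0$, so $\omega$ is a genuine twisted $1$-form of the correct degree; and since every coefficient $A_i=x_0P_i+x_1Q_i$ lies in $(x_0,x_1)$, its singular scheme $Z$ contains $L$.

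The heart of the argument is to show that for general $\pi,\rho$ one has $\sing_1(\sD)=L$, reduced, with everything else isolated. Along $L$ the scheme $Z$ is generically reduced, because at a general $p\in L$ the linear parts of the $A_i$ are $P_i(p)x_0+Q_i(p)x_1$, which cut out $L$ transversally precisely when the matrix with rows $(P_i(p))_i$ and $(Q_i(p))_i$ has rank $2$; this holds generically, as $\pi|_L$ and $\rho|_L$ are then independent. Away from $L$, a point of $Z$ forces $x_0(p)\pi(p)+x_1(p)\rho(p)=0$ with $(x_0(p),x_1(p))\ne0$, i.e.\ $\pi(p)$ and $\rho(p)$ are proportional with ratio $(-x_1:x_0)$. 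Since $\Omega^1_{\p3}(d+1)=\Omega^1_{\p3}(2)\otimes\op3(d-1)$ is globally generated, two general sections become linearly dependent only along a curve $\Gamma$ of the expected codimension $2$; imposing in addition that the proportionality ratio equals $(-x_1:x_0)$ is one further condition, cutting $\Gamma$ down to a finite set. Hence $Z\setminus L$ is $0$-dimensional, so $\sing_1(\sD)=L$ and the Chern classes follow.

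The main obstacle is exactly this last transversality step: I must rule out that the ratio map $\Gamma\to\p1$ coincides identically with $(-x_1:x_0)$, which would leave $\sD$ singular along the extra curve $\Gamma$, and I must ensure $L\not\subset\Gamma$. Since only existence is claimed, the cleanest route is to verify both by exhibiting a single explicit pair $(\pi_0,\rho_0)$ for each $d$ (or one working family) for which $Z\setminus L$ is finite and $L$ is reduced; a semicontinuity argument then upgrades this to the general pair. Alternatively one can realize $\sD$ through Proposition \ref{prop: const dist folbc}, starting from an auxiliary foliation by curves $\sG$ for which $N_\sG^*(d+2)$ is globally generated with $c_2=1$, so that a general section vanishes on a line by Bertini; the remaining difficulty there is producing such a $\sG$ with $\sing_1(\sG)\ne\emptyset$, which is what guarantees $\dim\coker\eta=0$ and hence $\sing_1(\sD)=(\sigma)_0=L$.
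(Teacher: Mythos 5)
Your construction is, in substance, the same as the paper's: the paper takes a generic morphism $\phi\colon\op3(-d-1)^{\oplus2}\to\Omega^1_{\p3}$ (a foliation by curves $\sG$ of degree $2d-1$ with $N_\sG^*=\op3(-d-1)^{\oplus2}$) together with the section $\sigma=(x_0,x_1)\in H^0(N_\sG^*(d+2))$ cutting out $L$, and the composite $\phi\circ\sigma$ is exactly your $1$-form $\omega=x_0\pi+x_1\rho$ with $\pi,\rho\in H^0(\Omega^1_{\p3}(d+1))$ general. Your reduction of the Chern class computation to the single claim $\sing_1(\sD)=L$, via \eqref{eq:chern-deg,pa}, is also correct. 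The genuine gap is that this single claim --- finiteness of $Z\setminus L$ and reducedness along $L$ for general $(\pi,\rho)$ --- is precisely what you do not prove: you flag it yourself as ``the main obstacle'' and offer two unexecuted fixes (an explicit pair for every $d\ge2$, which is an infinite family of verifications and none is given; or a fallback to Proposition \ref{prop: const dist folbc}). Moreover, your two-step degeneracy argument (first the curve $\Gamma$ where $\pi$ and $\rho$ are proportional, then ``one further condition'' on the ratio) cannot be made rigorous as stated: once the pair $(\pi,\rho)$ is fixed there is no parameter left to force transversality of the ratio condition on the particular curve $\Gamma$, which is exactly the failure mode you acknowledge.

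The step can be closed in two ways, and it is worth recording both. (a) Directly: since $\Omega^1_{\p3}(d+1)$ is globally generated, the linear system $V=\{x_0\pi+x_1\rho\}$ generates $\Omega^1_{\p3}(d+2)$ at every point of $\p3\setminus L$ (if $x_0(p)\neq0$, already the forms $x_0\pi$ surject onto the fiber at $p$); hence the incidence variety $\{(p,(\pi,\rho))\colon p\notin L,\ \omega(p)=0\}$ fibers over $\p3\setminus L$ with fibers of codimension $3$, so it has the same dimension as the parameter space, and the general $\omega\in V$ has finite zero locus off $L$ --- equivalently, apply the Bertini-type theorem \cite[Teorema 2.8]{O} on $\p3\setminus L$. (b) Via Proposition \ref{prop: const dist folbc}, which is the paper's route; but note that the hypothesis you would need to verify there is not ``$\sing_1(\sG)\neq\emptyset$'': what Proposition \ref{prop: const dist folbc} and Remark \ref{rem:eta=0} require is that $W=\sing(\sG)$ be irreducible and reduced --- which the paper gets from \cite[Teorema 2.8]{O} (smoothness of $W$ for generic $\phi$) together with connectedness of $W$ from \cite[Proposition 4.3]{CJM}, valid because $\deg\sG=2d-1\geq3$ --- and that $\eta\neq0$, which for $d\geq2$ follows from Remark \ref{rem:eta=0}, since $\eta=0$ would force $T_\sD$ to split as $\op3(1-k)\oplus\op3(k+3-l)$ with $l-k\geq2$, impossible here as $l-k=3-d\leq1$. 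With either repair your argument becomes a complete proof; as written, it is an outline whose central claim is deferred.
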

	
	In particular, taking $d=2$ in the statement above implies that there exists a codimension one distribution $\sD$ of degree 2 such that $(c_2(T_\sD),c_3(T_\sD))=(5,14)$.
	
	\begin{proof}
		The starting point is a foliation by curves of the form
		\begin{equation}\label{seq:folbc5-14}
			\sG ~:~ 0 \longrightarrow \op3(-d-1)^{\oplus2} \stackrel{\phi}{\longrightarrow} \Omega^1_{\p3} \longrightarrow \mathscr{I}_W(2d-2) \longrightarrow 0; 
		\end{equation}
		note that $\deg(\sG)=2d-1\ge3$, so $W$ must be connected by \cite[Proposition 4.3]{CJM}. By taking a generic morphism $\phi$, Ottaviani's Bertini-type Theorem \cite[
		Teorema 2.8]{O} implies that we can assume $W$ to be smooth, so $W$ is an irreducible curve.
		
		Since $N_\sG^*=\op3(-d-1)^{\oplus2}$, we can find $\sigma\in H^0(N_\sG^*(d+2))$ so that $(\sigma)_0=L$, for any given line $L$. We then apply Proposition \ref{prop: const dist folbc} to the pair $(\sG,\sigma)$ we just described. Note that for $d\geq 2$ we have $\deg(\sG)\geq d+1$ and Remark \ref{rem:eta=0} implies $\eta \neq 0$; since $W$ is irreducible, $\dim \coker \eta = 0$. 
		
		The induced distribution $\sD$ will have degree $d$ and $\sing_1(\sD)=L$; the Chern classes for $T_\sD$ follow directly from \eqref{eq:chern dist from folbc}.
	\end{proof}
	
	\begin{remark}
		There exist distributions as in the statement of Proposition \ref{prop:exist c2=5} also for $d=0$ or $1$, but the proof above does not apply to these cases: when $d=0$, there is no morphism $\op3(-1)^{\oplus2} \rightarrow \Omega^1_{\p3}$; when $d=1$, the singular scheme $W$ is not connected. 
	\end{remark}
	
	To conclude this section we compute the possible spectra for the tangent sheaf of a codimension one distribution of degree 2 with $c_2(T_\sD)=5$. Let $\{k_1, \dots, k_5\}$ be the spectrum of $T_\sD$; note that $k_1+\cdots+k_5=-7$ and $k_i \geq -3$ due to Lemma \ref{lem:bound spectrum}. We can prove, see Lemma \ref{lem:spec514} below, that $h^1(T_\sD(-1)) = 0$. Hence $k_i\leq -1$ for every $i$ and the only possible spectrum is 
	\[
	\{-2,-2,-1,-1,-1\}.
	\]

	\begin{lemma}\label{lem:spec514}
		If $\cald$ is a codimension one distribution on $\p3$ of degree $2$ such that $(c_2(T_\sD),c_3(T_\sD))=(5,14)$, then $h^1(T_\sD(-1))=0$.
	\end{lemma}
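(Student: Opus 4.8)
The plan is to read off $h^1(T_\sD(-1))$ from the dual presentation \eqref{seq:zeta} of $T_\sD$ and to show that the relevant connecting homomorphism is injective. First I would record that, since $c_2(T_\sD)=5$ forces $\deg C=1$ and $c_3(T_\sD)=14$ forces $p_a(C)=0$ by \eqref{eq:chern sing1 deg2}, the curve $C=\sing_1(\sD)$ is a line $L$, so $\omega_C=\mathcal{O}_L(-2)$. Twisting \eqref{seq:zeta} by $\op3(-1)$ and setting $Q:=\operatorname{im}(\Omega^1_{\p3}\to T_\sD)$, I split it into
\[
0\to\op3(-5)\to\Omega^1_{\p3}(-1)\to Q(-1)\to 0,\qquad 0\to Q(-1)\to T_\sD(-1)\to\mathcal{O}_L(-3)\to 0.
\]

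Next I would run the two long exact sequences, using Bott's formulas (the sheaf $\Omega^1_{\p3}(-1)$ has vanishing cohomology in every degree) together with the vanishing of $H^1$ and $H^2$ of line bundles on $\p3$. The first sequence then gives $H^1(Q(-1))=0$ and $H^2(Q(-1))\cong H^3(\op3(-5))\cong H^0(\op3(1))^\vee$. Feeding this into the second sequence and using $H^0(\mathcal{O}_L(-3))=0$, I obtain
\[
0\longrightarrow H^1(T_\sD(-1))\longrightarrow H^1(\mathcal{O}_L(-3))\stackrel{\partial}{\longrightarrow}H^2(Q(-1))\cong H^0(\op3(1))^\vee,
\]
so that $h^1(T_\sD(-1))=\dim\ker\partial$, where $H^1(\mathcal{O}_L(-3))\cong H^0(\mathcal{O}_L(1))^\vee$ by Serre duality on $L\cong\p1$. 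A Riemann--Roch check gives $\chi(T_\sD(-1))=2$, while $h^0(T_\sD(-1))=h^3(T_\sD(-1))=0$ by $\mu$-stability; hence $h^2(T_\sD(-1))=h^1(T_\sD(-1))+2$, and proving $\partial$ injective simultaneously forces $h^2(T_\sD(-1))=2$, consistent with the target spectrum $\{-2,-2,-1,-1,-1\}$.

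The heart of the argument, and the step I expect to be the main obstacle, is the identification of $\partial$. The composite $\partial$ is the connecting map attached to the four-term sequence \eqref{seq:zeta}$(-1)$, i.e. Yoneda product with its class in $\operatorname{Ext}^2(\mathcal{O}_L(-2),\op3(-4))$; since $L$ is a smooth complete intersection curve, $\mathcal{E}\mathit{xt}^2(\mathcal{O}_L,\op3)=\det N_{L/\p3}=\mathcal{O}_L(2)$ and this Ext group is one-dimensional, generated by the fundamental class of $L$. I would then invoke functoriality of Serre duality (the fundamental-class/Gysin formalism) to identify $\partial\colon H^0(\mathcal{O}_L(1))^\vee\to H^0(\op3(1))^\vee$ with the transpose of the restriction map $H^0(\op3(1))\to H^0(\mathcal{O}_L(1))$. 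Since linear forms on $\p3$ restrict surjectively onto $H^0(\mathcal{O}_L(1))$, this transpose is injective, whence $\ker\partial=0$ and $h^1(T_\sD(-1))=0$. The delicate points to verify are that the extension class of \eqref{seq:zeta} is nonzero (equivalently, that $\partial$ is a nonzero multiple of the dual restriction rather than the zero map) and the precise compatibility of the two Serre-duality identifications; both reduce to the standard computation of the boundary map associated with the inclusion of the line $L$.
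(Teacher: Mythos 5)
Your reduction is set up correctly: with $\sing_1(\sD)$ a line $L$ (so $\omega_L=\mathcal{O}_L(-2)$), the splitting of \eqref{seq:zeta}$(-1)$ into the two displayed short exact sequences, the Bott vanishing for $\Omega^1_{\p3}(-1)$, and the resulting sequence
\[
0\longrightarrow H^1(T_\sD(-1))\longrightarrow H^1(\mathcal{O}_L(-3))\stackrel{\partial}{\longrightarrow}H^2(Q(-1))\simeq H^3(\op3(-5))
\]
are all right, as is the fact that the generator of $\Ext^2(\omega_L(-1),\op3(-5))\simeq\mathbb{C}$, computed from the Koszul resolution of $L$, induces on $H^1\to H^3$ the transpose of the surjective restriction $H^0(\op3(1))\to H^0(\mathcal{O}_L(1))$, hence an injective map. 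The gap is exactly at the point you flagged and then waved off: the nonvanishing of the class $e$ of \eqref{seq:zeta} in this one-dimensional Ext group. This does \emph{not} reduce to ``the standard computation of the boundary map associated with the inclusion of the line $L$'': that computation concerns only the generator and is blind to whether $e$ is a nonzero multiple of it or zero. The dichotomy is genuine: if $e=0$ then $\partial=0$, and your own exact sequence would give $h^1(T_\sD(-1))=h^1(\mathcal{O}_L(-3))=2$, i.e.\ the lemma would be false. So the nonvanishing must come from a property of $T_\sD$, and as written your proof does not supply it.

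Fortunately the gap closes in a few lines. Since $\omega_L$ is Cohen--Macaulay of codimension $2$, $\inhom(\omega_L,\op3)=\inext^1(\omega_L,\op3)=0$, so the local-to-global spectral sequence identifies $\Ext^2(\omega_L,\op3(-4))\simeq H^0\bigl(\inext^2(\omega_L,\op3(-4))\bigr)\simeq H^0(\mathcal{O}_L)=\mathbb{C}$; under this identification $e$ is a constant whose germ at each $p\in L$ is the Yoneda class of the stalk of \eqref{seq:zeta} at $p$. Locally at $p$ one has $\Ext^1_{\mathcal{O}_p}\bigl((\omega_L)_p,(\Omega^1_{\p3})_p\bigr)=0$, so the connecting map $\Ext^1_{\mathcal{O}_p}\bigl((\omega_L)_p,Q_p\bigr)\to\Ext^2_{\mathcal{O}_p}\bigl((\omega_L)_p,\mathcal{O}_p(-4)\bigr)$ is injective; hence the germ of $e$ at $p$ vanishes if and only if $0\to Q_p\to (T_\sD)_p\to(\omega_L)_p\to0$ splits, which is impossible, since a splitting would make the nonzero torsion module $(\omega_L)_p$ a direct summand of the torsion-free (reflexive) module $(T_\sD)_p$. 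Thus $e\neq0$, $\partial$ is injective, and your argument is complete. Once patched, this is a genuinely different proof from the paper's, which instead uses $h^0(T_\sD(2))=7$ to present $T_\sD(-2)$ as an extension of $\mathscr{I}_Y$ by $\op3(-4)$ for a Cohen--Macaulay curve $Y$ of degree $9$ and genus $8$, reduces the lemma to $h^1(\mathscr{I}_Y(1))=0$, and proves that vanishing by hyperplane sections together with a dimension count on the planes containing components of $Y$; your route is shorter and more structural, while the paper's stays within the cohomology of curves and avoids any Ext-class analysis.
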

	
	\begin{proof} 
		Due to Lemma \ref{lem:h2=0}, $h^0(T_\sD(1)) = h^1(T_\sD(2)) = h^2(T_\sD(2)) =h^3(T_\sD(2)) = 0$; hence $h^0(T_\sD(2)) = 7$, by Hirzebruch--Riemann--Roch Theorem. Then a general section $\sigma \in H^0(T_\sD(2))$ induces an exact sequence
		\[
		0 \longrightarrow \op3(-4) \longrightarrow F(-2) \stackrel{\sigma^{\vee}}{\longrightarrow} \mathscr{I}_Y \longrightarrow 0,
		\]
		where $Y$ is a Cohen--Macaulay curve of degree $9$ and (arithmetic) genus $8$. Moreover, 
		\begin{itemize}
			\item $h^0(\mathscr{I}_Y(p))= 0$ for $p\leq 3$;
			\item $h^2(\mathscr{I}_Y(p))= 0$ for $p\geq 3$;
			\item $h^1(\mathscr{I}_Y(p)) = 0$ for $p\not \in \{1,2\}$, $h^1(\mathscr{I}_Y(1))\leq 1$ and $1 \leq h^1(\mathscr{I}_Y(2))\leq 2$.
		\end{itemize}
		
		Take a hyperplane section $\Gamma = Y \cap H$. As $Y$ has pure dimension $1$, for any $H = \{h=0\}$ such that $\dim \Gamma = 0$ we have a short exact sequence
		\begin{equation}\label{seq:hsecY98}
			0 \longrightarrow \mathscr{I}_Y(-1) \stackrel{\cdot h}{\longrightarrow} \mathscr{I}_Y \longrightarrow \mathscr{I}_{\Gamma/H} \longrightarrow 0,
		\end{equation}
		where $\mathscr{I}_{\Gamma/H}\subset \mathcal{O}_H$ is the ideal sheaf of $\Gamma$ in $H \simeq \p2$. 
		
		First we note that $h^0(\mathscr{I}_{\Gamma/H}(2)) =  0$. Indeed, $h^0(\mathscr{I}_Y(2)) = 0$ implies that 
		\[
		h^0(\mathscr{I}_{\Gamma/H}(2)) \leq h^1(\mathscr{I}_Y(1)) \leq 1
		\]
		and $h^0(\mathscr{I}_{\Gamma/H}(2)) = 1$ would imply $h^0(\mathscr{I}_{\Gamma/H}(3)) \geq 3$. However, $h^0(\mathscr{I}_{\Gamma/H}(3))\leq 2$ since $h^0(\mathscr{I}_Y(3)) = h^1(\mathscr{I}_Y(3)) =0$.
		
		We will now show that $h^1(\mathscr{I}_Y(1)) = 0$. Let us assume, by contradiction, that $h^1(\mathscr{I}_Y(1)) >0$, hence $h^1(\mathscr{I}_Y(1)) = 1$. From the sequence \eqref{seq:hsecY98}, we have that 
		\[
		H^0(\mathscr{I}_{\Gamma/H}(2)) = 0 \longrightarrow H^1(\mathscr{I}_Y(1))  \stackrel{\cdot h}{\longrightarrow}  H^1(\mathscr{I}_Y(2)),
		\]
		hence the multiplication with $h$ is injective whenever $\{h=0\} \cap Y$ is $0$-dimensional. Fix a generator $v\in H^1(\mathscr{I}_Y(1))\setminus \{0\}$ and consider the map 
		\[
		\phi \colon H^0(\op3(1))  \rightarrow  H^1(\mathscr{I}_Y(2)) ; \quad \phi(h) = hv .
		\]
		Note that $\dim \ker \phi = 3$, since  $\phi \neq 0$ and $h^1(\mathscr{I}_Y(2)) = 1$. 
		
		Let $W \subset \mathbb{P}H^0(\op3(1))$ be the the Zariski closed subset defined by 
		\[
		W = \{ H \in \mathbb{P}H^0(\op3(1)) \mid \dim H \cap Y = 1 \}.
		\]
		In particular we have that $\mathbb{P} (\ker \phi) \subset W$, hence $\dim W \geq 2$. We will see that this is absurd, concluding the proof.
		
		Given a hyperplane $H$, we have $H \cap Y$ has dimension $1$ if and only if $H$ contains some irreducible component of $Y_{red}$. Then let $Y_{red} = Y_1 \cup Y_2 \cup \dots \cup Y_r$ be a decomposition into irreducible components and define 
		\[
		W_i  = \{ H \in \mathbb{P}H^0(\op3(1)) \mid Y_i \subset H \}.
		\]
		Note that $W = \bigcup_{i=1}^r W_i$ and $W_i\neq \emptyset$ if and only if $Y_i$ is a plane curve, and $\dim W_i = 1$ only if $Y_i$ is a line. Therefore $\dim W \leq 1$. 
	\end{proof}

	%%%%%%%%%%%%%%%%%%%%%%%%%%%%%%%%%%%%%%%%%%%%%%
	%%%%%%%%%%%%%%%%%%%%%%%%%%%%%%%%%%%%%%%%%%%%%%

	\section{Distributions with \texorpdfstring{$c_2(T_\sD)=6 $}{c2 =6}}\label{sect:c2=6}
	
	The first line of Table \ref{deg 2 table} refers to the distributions with $c_2(T_\sD)=6$. These are the \emph{generic distributions} of degree 2, that is, those distributions whose singular scheme is 0-dimensional. In particular, $c_3(T_\sD) = 20$. The subset of $\mathbb{P}H^0(\op3(4))$ parameterizing isomorphism classes of generic distributions of degree two is Zariski open. One can easily find an example of such a distribution.
	
	\begin{example}
		Let $\sD$ be the distribution given by the following $1$-form, inspired by Jouanolou's examples \cite[p. 160]{Jou}, 
		\[
		\omega = (x^2y-w^3)dx + (y^2z-x^3)dy + ( z^2w-y^3)dz + (w^2x-z^3)dw.
		\]
		Its singular scheme is the reduced set of points $\{(\xi: \xi^{-2} : \xi^7 : 1) \mid \xi^{20} = 1 \}$.
	\end{example}
	
	Generic distributions of degree two are completely determined by its singular scheme, see \cite[Corollary 4.7]{CJMdeg1}; and they are never integrable, due to the dimension of the singular scheme.

	It only remains for us to determine the spectrum of the tangent sheaf. Let $\sD$ be a generic distribution and let $\{k_1, \dots , k_6\}$ be the spectrum of $T_\sD$. Since $\sing(\sD)$ has codimension three, $\inext^1(\mathscr{I}_Z, \op3) = 0$ and dualizing the exact sequence in display \eqref{seq:dist-p3} we obtain
	\[
	0 \longrightarrow \op3(-4) \longrightarrow \Omega^1_{\p3} \longrightarrow T_\sD \longrightarrow 0
	\]
	from which it follows that $h^1(T_\sD(-1)) = 0$ and $h^2(T_\sD(1))= 0$ and $h^2(T_\sD)= 1$. Due to \eqref{eq:def spectrum}, we have $-3 \leq k_i \leq -1$, for every $i$, and $-3$ occurs once in the spectrum. Since $k_1 + \cdots + k_6 = -10$, an easy computation of integer partitions shows that the spectrum is $\{-3,-2,-2,-1,-1,-1\}$.

	%%%%%%%%%%%%%%%%%%%%%%%%%%%%%%%%%%%%%%%%%%%%%%
	
	% \bibliographystyle{abbrvurl}
	% \bibliography{biblio}

\end{document}